\documentclass{amsart}

\setcounter{tocdepth}{4}
\setcounter{secnumdepth}{4}

\usepackage[utf8]{inputenc}
\usepackage{amsmath, amssymb,amsthm,tikz}
\usepackage{tikz-cd}
\usepackage{tikz}
\usepackage{todonotes}
\usepackage{mathrsfs}
\usepackage{extarrows}
\usepackage{graphicx}
\usepackage{thmtools}
\usepackage{mathtools}
\usepackage{hyperref}

\usepackage[english]{babel}

\usepackage[toc,page]{appendix}

\newcommand{\Z}{\mathbb{Z}}

\newcommand{\Q}{\mathbb{Q}}
\newcommand{\F}{\mathbb{F}}
\newcommand{\A}{\mathbb{A}}
\newcommand{\mcA}{\mathcal{A}}

\newcommand{\G}{\mathbb{G}}

\newcommand{\Oo}{\mathcal{O}}

\newcommand{\an}{\operatorname{an}}

\newcommand{\unr}{\operatorname{unr}}

\newcommand{\rank}{\operatorname{rank}}

\newcommand{\Aut}{\operatorname{Aut}}

\newcommand{\et}{\operatorname{\acute{e}t}}

\newcommand{\dR}{\operatorname{dR}}

\newcommand{\Spec}{\operatorname{Spec}}

\newcommand{\Frac}{\operatorname{Frac}}

\newcommand{\Fil}{\operatorname{Fil}}
\newcommand{\Hom}{\operatorname{Hom}}
\newcommand{\Isom}{\operatorname{Isom}}

\newcommand{\Sh}{\operatorname{Sh}}

\newcommand{\Gal}{\operatorname{Gal}}

\newcommand{\GL}{\operatorname{GL}}

\newcommand{\GSp}{\operatorname{GSp}}

\newcommand{\cris}{\operatorname{cris}}

\hypersetup{
    colorlinks,
    citecolor=blue,
    filecolor=blue,
    linkcolor=blue,
    urlcolor=blue
}

\renewcommand{\mod}{\operatorname{mod}}

\title{Normalization in integral models of Shimura varieties of Hodge type}
\author{Yujie Xu}
\address{Department of Mathematics, Harvard University}
\curraddr{}
\email{yujiex@math.harvard.edu}
\date{}

\numberwithin{equation}{subsection}
\newtheorem{theorem}[equation]{Theorem}
\newtheorem{prop}[equation]{Proposition}

\newtheorem{lem}[equation]{Lemma}
\newtheorem{Coro}[equation]{Corollary}

\theoremstyle{definition}

\newtheorem{remark}[equation]{Remark}
\newtheorem{numberedparagraph}[equation]{}
\newtheorem{conj}[equation]{Conjecture}

\begin{document}

\maketitle

\begin{abstract}
    Let $(G,X)$ be a Shimura datum of Hodge type, and $\mathscr{S}_K(G,X)$ its integral model with hyperspecial (resp.~parahoric, assuming the group is unramified) level structure. We prove that $\mathscr{S}_K(G,X)$ admits a closed embedding, which is compatible with moduli interpretations, into the integral model $\mathscr{S}_{K'}(\GSp,S^{\pm})$ for a Siegel modular variety. In particular, 
    the normalization step in the construction of $\mathscr{S}_K(G,X)$ is redundant. In particular, our results apply to the earlier integral models constructed by Rapoport, Kottwitz etc.~(resp.~Rapoport-Zink etc.), as those models agree with the Hodge type integral models for appropriately chosen Shimura data. 
    
    Moreover, combined with a result of Lan's on the boundary components of toroidal compactifications of integral models, our result also implies that there exist closed embeddings of toroidal compactifications of integral models of Hodge type into toroidal compactifications of Siegel integral models, for suitable choices of cone decompositions. 
\end{abstract}

\tableofcontents

\section{Introduction}
\subsection{Main results and Outline}\label{introduction-main-results-section}
Let $(G,X)$ be a Shimura datum of Hodge type, i.e.~it is equipped with an embedding $(G,X)\hookrightarrow (\GSp(V,\psi),S^{\pm})$, where $V$ is a $\Q$-vector space equipped with a symplectic pairing $\psi$. The embedding of Shimura data induces an embedding of Shimura varieties $\Sh_K(G,X)\hookrightarrow\Sh_{K'}(\GSp,S^{\pm})$, where $K'\subset\GSp(\A_f)$. The moduli interpretation of the Siegel modular variety $\Sh_{K'}(\GSp,S^{\pm})$ naturally gives rise to an integral model $\mathscr{S}_{K'}(\GSp,S^{\pm})$.  
We consider the integral model $\mathscr{S}_K(G,X)$ of $\Sh_K(G,X)$ with hyperspecial (resp.~parahoric) level structure,
as constructed in \cite{Kisin-integral-model} (resp.~\cite{Kisin-Pappas}), which is initially defined as the normalization of the closure of $\Sh_K(G,X)$ inside $\mathscr{S}_{K'}(\GSp,S^{\pm})$. In this article, we show that this construction can be simplified, in that the normalization step is redundant, and that $\mathscr{S}_K(G,X)$ is simply the closure of $\Sh_K(G,X)$ inside $\mathscr{S}_{K'}(\GSp,S^{\pm})$.

Our main theorem is the following, which is independent of the choice of symplectic embeddings.
\begin{theorem}\label{Main-Theorem-intro}
For $K\subset G(\A_f)$ small enough and hyperspecial, there exists some $K'\subset\GSp(\A_f)$, such that we have a closed embedding (``the Hodge embedding'')
\[\mathscr{S}_K(G,X)\hookrightarrow\mathscr{S}_{K'}(\GSp,S^{\pm})\]
Consequentially, the normalization step $\mathscr{S}_K(G,X)\xrightarrow{\nu}\mathscr{S}_K^-(G,X)$ is redundant as the closure $\mathscr{S}_K^-(G,X)$ is already smooth, and the integral model $\mathscr{S}_K(G,X)$ has a moduli interpretation inherited from that of $\mathscr{S}_{K'}(\GSp,S^{\pm})$. 
\end{theorem}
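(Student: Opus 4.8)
The plan is to prove that, for a suitable $K'$, the composite
$f\colon \mathscr{S}_K(G,X)\xrightarrow{\ \nu\ }\mathscr{S}_K^-(G,X)\xrightarrow{\ \iota\ }\mathscr{S}_{K'}(\GSp,S^{\pm})$
is a closed immersion; the remaining assertions of the theorem then follow formally. Indeed, since $\nu$ is finite and surjective and $\mathscr{S}_K^-(G,X)$ is reduced, $\mathscr{S}_K^-(G,X)$ is the scheme-theoretic image of $f$; so if $f$ is a closed immersion, then factoring $f$ through its scheme-theoretic image identifies $\nu$ with an isomorphism $\mathscr{S}_K(G,X)\xrightarrow{\sim}\mathscr{S}_K^-(G,X)$. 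Hence the normalization step is redundant, the closure $\mathscr{S}_K^-(G,X)$ is smooth (being isomorphic to $\mathscr{S}_K(G,X)$, which is smooth by \cite{Kisin-integral-model}), and it inherits the moduli interpretation of $\mathscr{S}_{K'}(\GSp,S^{\pm})$ by restriction along $\iota$.

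To show $f$ is a closed immersion I would check that it is finite, unramified, and universally injective; a finite, unramified, universally injective morphism of Noetherian schemes is a closed immersion. Finiteness is immediate: $\nu$ is finite because $\mathscr{S}_K^-(G,X)$ is excellent, and $\iota$ is a closed immersion. For unramifiedness, fix a closed point $x$ of the special fiber of $\mathscr{S}_K(G,X)$ with image $y$ in $\mathscr{S}_{K'}$. By Serre--Tate and Grothendieck--Messing, $\widehat{\Oo}_{\mathscr{S}_{K'},y}$ is the formally smooth deformation ring of the polarized $p$-divisible group at $y$, and by \cite{Kisin-integral-model} the completed local ring $\widehat{\Oo}_{\mathscr{S}_K(G,X),x}$ is a formally smooth quotient $R_{\mathcal G}$ of $\widehat{\Oo}_{\mathscr{S}_{K'},y}$ --- classifying the deformations along which the crystalline tensors at $x$ remain Tate tensors --- with quotient map the one induced by $f$. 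In particular $\widehat{\Oo}_{\mathscr{S}_K(G,X),x}$ is a quotient of $\widehat{\Oo}_{\mathscr{S}_{K'},y}$, so $\Omega_{\mathscr{S}_K(G,X)/\mathscr{S}_{K'}}$ has vanishing stalk at $x$; as this sheaf also vanishes on the generic fiber, where $\Sh_K(G,X)\hookrightarrow\Sh_{K'}(\GSp,S^{\pm})$ is a closed immersion, and the special fiber is Jacobson, it vanishes identically, so $f$ is unramified. The same ``quotient'' statement shows the residue field extensions of $f$ are trivial, so universal injectivity of $f$ reduces to injectivity on underlying topological spaces, and --- the generic fiber being a closed immersion --- further to injectivity of $\overline{\mathscr{S}}_K(G,X)(\overline{\F}_p)\to\overline{\mathscr{S}}_{K'}(\GSp,S^{\pm})(\overline{\F}_p)$.

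The main obstacle is exactly this injectivity on $\overline{\F}_p$-points: that the abelian variety, polarization, and prime-to-$p$ level structure underlying an $\overline{\F}_p$-point of $\mathscr{S}_K(G,X)$ determine the point, equivalently that the Tate tensors attached to it by Kisin's moduli interpretation are recovered from that data. This is not formal --- a priori two points could carry isomorphic abelian-variety data but genuinely different tensors. I would attack it by choosing the symplectic embedding and the level $K'$ conveniently (so that $G$ is the pointwise stabilizer in $\GSp$ of finitely many tensors and $\Sh_K(G,X)\hookrightarrow\Sh_{K'}(\GSp,S^{\pm})$ is a closed immersion), and then combining Kisin's construction of the tensors by specialization from characteristic $0$ with the crystalline--\'etale comparison to see that the tensor package at an $\overline{\F}_p$-point is pinned down by the $p$-divisible group with its Frobenius (resp.\ the prime-to-$p$ Tate module) together with the constraints forced by the point being a specialization of a characteristic-$0$ point of $\Sh_K(G,X)$. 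Where the specialization argument must be propagated across the closure $\mathscr{S}_K^-(G,X)$ rather than $\mathscr{S}_K(G,X)$, I would use the local model diagram of $\mathscr{S}_{K'}(\GSp,S^{\pm})$ together with the smoothness of the hyperspecial local model $M^{\mathrm{loc}}$ of $(G,X)$ to control the branches of $\mathscr{S}_K^-(G,X)$ in the special fiber. Once this injectivity is established, $f$ is a closed immersion and the theorem follows as in the first paragraph.
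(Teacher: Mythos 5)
Your formal reduction is sound and in fact somewhat cleaner than the paper's: you argue that $f=\iota\circ\nu$ is finite, unramified (via the deformation-theoretic fact that $\widehat{\Oo}_{\mathscr{S}_K(G,X),x}$ is a formally smooth quotient of $\widehat{\Oo}_{\mathscr{S}_{K'},y}$, which is indeed Kisin's theorem and is the same local input the paper uses through Lemma \ref{Kisin-Prop-2.3.5}), has trivial residue extensions, and is therefore a closed immersion once one knows injectivity on $\overline{\F}_p$-points. The paper instead goes from injectivity on $\overline{\F}_p$-points to unibranchness of $\mathscr{S}_K^-(G,X)$, then invokes formal smoothness of the branches. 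Both reductions land on the same point; your packaging via the EGA criterion for closed immersions is a perfectly valid alternative.

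However, there is a genuine gap at exactly the step you label ``the main obstacle,'' and that step is where essentially all of the paper's work lies. You must show that if $x,x'\in\mathscr{S}_K(G,X)(\overline{\F}_p)$ have the same image in $\mathscr{S}_{K'}(\GSp,S^{\pm})$ for all $K'\supset K$, then $s_{\alpha,\cris,x}=s_{\alpha,\cris,x'}$; but your proposed argument --- that the crystalline tensor package is ``pinned down by the $p$-divisible group with its Frobenius together with the constraints forced by the point being a specialization of a characteristic-$0$ point'' --- asserts precisely what has to be proven. The a priori danger is that two distinct crystalline tensor packages on the same $p$-divisible group both admit Tate-tensor deformations lifting into $\Sh_K(G,X)$, producing two branches of $\mathscr{S}_K^-(G,X)$ through the same mod $p$ point; neither specialization from characteristic zero nor the local model diagram rules this out by itself. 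The paper's actual argument (Proposition \ref{key-tensor-implication}) is a nontrivial chain: first the same-image hypothesis for all $K'$ gives $s_{\alpha,\ell,x}=s_{\alpha,\ell,x'}$; then one deduces $I_x=I_{x'}$, CM-lifts $x$ and $x'$ using the \emph{same} torus $T\subset I_x=I_{x'}$ (via \cite{Kisin-mod-p-points}), produces a characteristic-zero isogeny $\mcA_{\widetilde y}\to\mcA_{\widetilde y'}$ between the two CM lifts by matching filtrations through Lemma \ref{Kisin-1.1.19}, and then uses that characteristic-zero Hodge cycles are determined by their $\ell$-adic components to propagate the equality of tensors to the crystalline side. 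None of this is present or substituted for in your proposal, so as written it does not constitute a proof of the theorem.
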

In particular, the Hodge morphism is a closed embedding in the PEL case, where we consider integral models constructed in \cite{Kottwitz} (resp.~\cite{Rapoport-Zink}). 
To see the result in that case (see \cite{PEL-embedding} for details), recall that the Hodge morphism is given by forgetting the $\Oo_B$-action on an abelian scheme $\mcA$, where $B$ is a semisimple $\Q$-algebra attached to the PEL moduli problem. Let $T^{(p)}(\mcA)$ be the prime-to-$p$ Tate module. For a point on $\mathscr{S}_K(G,X)$, the corresponding level structure $\eta:V\otimes \A_f^p\xrightarrow{\sim}T^{(p)}(\mcA)\otimes\A_f^p$ is compatible with the $\Oo_B$-actions. This shows that the $\Oo_B$-action on $\mcA$ is already determined by $\eta$, and hence that the Hodge morphism is an embedding. Strictly speaking, this argument only applies when we let the level structure away from $p$ go to zero, but it is not hard to deduce Theorem \ref{Main-Theorem-intro} from this.

In the general Hodge type case, the mod $p$ points of the integral model $\mathscr{S}_K(G,X)$ can be interpreted as abelian varieties equipped with certain ``mod $p$ Hodge cycles'', which come from reduction mod $p$ of Hodge cycles in characteristic zero.  
We denote the mod $p$ Hodge cycle at a mod $p$ point $x\in\mathscr{S}_K(G,X)$ by a tuple $(s_{\alpha,\ell,x},s_{\alpha,\cris,x})$, which is determined by either its $\ell$-adic \'etale component or its cristalline component (by Proposition \ref{final-num-cris-triviality-motivated}). This is analogous to the case of Hodge cycles in characteristic $0$, which are determined by either their \'etale components or their de Rham components.

More specifically, let $\mathscr{S}_K^-(G,X)$ be the closure of $\Sh_K(G,X)$ in $\mathscr{S}_{K'}(\GSp,S^{\pm})$. By a criterion in \cite{Kisin-mod-p-points} (resp.~\cite{Rong-mod-p}), two mod $p$ points $x,x'\in\mathscr{S}_K(G,X)(k)$ that have the same image 
in $\mathscr{S}_K^-(G,X)(k)$ are equal if and only if $s_{\alpha,\cris,x}=s_{\alpha,\cris,x'}$. Therefore, to show that the normalization morphism is an isomorphism, it reduces to proving the following statement on cohomological tensors: 
\begin{prop}\label{key-tensor-implication}
$s_{\alpha,\ell,x}=s_{\alpha,\ell,x'}\Longrightarrow s_{\alpha,\cris,x}=s_{\alpha,\cris,x'}$.
\end{prop}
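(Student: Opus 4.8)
The plan is to compare the étale and crystalline realizations of the Hodge tensors through a common object that sees both: namely, a suitable $p$-adic comparison isomorphism applied to the abelian variety at the point $x$ and its reduction. Concretely, I would first lift the mod $p$ point $x\in\mathscr{S}_K(G,X)(k)$ to a point $\tilde{x}$ over a complete discrete valuation ring $\Oo_E$ of mixed characteristic with residue field $k$; the smoothness of $\mathscr{S}_K^-(G,X)$ away from the normalization — or at worst a deformation-theoretic argument as in \cite{Kisin-integral-model} — lets me do this, and in fact I want the lift to be chosen so that the generic fibre $\tilde{x}_{\bar E}$ carries the characteristic-zero Hodge cycles $s_{\alpha}$ whose étale and de Rham avatars specialize to $(s_{\alpha,\ell,x}, s_{\alpha,\cris,x})$. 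Since $x$ and $x'$ have the same image in $\mathscr{S}_K^-(G,X)(k)$, the underlying abelian varieties $\mcA_x$ and $\mcA_{x'}$ (with their prime-to-$p$ level structure and polarization) are identified, so after such lifts I am comparing two sets of Hodge cycles on abelian varieties over $\bar E$ whose reductions share the same abelian variety.

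The key step is then the following: the prime-to-$p$ étale tensors $s_{\alpha,\ell,x}$ determine $s_{\alpha,\cris,x}$ because both are governed by the $p$-adic étale tensor $s_{\alpha,p,\tilde x}\in H^1_{\et}(\mcA_{x,\bar E},\Z_p)^{\otimes}$, which on one hand is matched with $s_{\alpha,\ell,x}$ through the fact that the characteristic-zero Hodge cycle is a single cycle in absolute Hodge (or just Betti) cohomology whose $\ell$-adic components for all $\ell$ — including $\ell = p$ — are cut out compatibly, and on the other hand maps to $s_{\alpha,\cris,x}$ via the $p$-adic comparison isomorphism (crystalline–étale comparison over $\Oo_E$, i.e.\ the functor $D_{\cris}$ applied to the $p$-adic Tate module, using that $\mcA_x$ has good reduction). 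So if $s_{\alpha,\ell,x} = s_{\alpha,\ell,x'}$ for $\ell \neq p$, I must first upgrade this to $s_{\alpha,p,\tilde x} = s_{\alpha,p,\tilde x'}$, and then transport across the comparison isomorphism to conclude $s_{\alpha,\cris,x} = s_{\alpha,\cris,x'}$.

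The main obstacle is precisely that upgrade from "equal at all $\ell \neq p$" to "equal at $p$": the hypothesis of Proposition \ref{key-tensor-implication} only gives the prime-to-$p$ components, whereas the comparison with the crystalline side needs the $p$-adic component. The resolution I would pursue is that the Hodge cycles defining $\mathscr{S}_K(G,X)$ are not arbitrary — they arise from a single $G$-valued object, so the $\ell$-adic tensors for varying $\ell$ are not independent but are the $\ell$-adic incarnations of one algebraic (or motivated) class, and the locus in the Siegel integral model where all étale tensors (for all $\ell$, including $p$) agree is exactly $\mathscr{S}_K^-(G,X)$ pulled back appropriately. In other words, I expect to reduce to showing that the $p$-adic étale tensor extends over $\Oo_E$ to a well-defined class (which is where Kisin's or Rong Zhou's crystalline criterion and the theory of $p$-adic Hodge cycles à la Blasius–Wintenberger enter) and that this extension is canonically attached to the point of $\mathscr{S}_K^-(G,X)$, hence insensitive to which of $x, x'$ we started from. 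Once the $p$-adic étale tensor is pinned down by the shared data in $\mathscr{S}_K^-(G,X)(k)$ together with the agreement away from $p$, the crystalline tensor is pinned down by functoriality of the comparison isomorphism, and Proposition \ref{key-tensor-implication} follows; the structure of the argument mirrors the characteristic-zero fact that a Hodge cycle is determined by its étale component, now propagated across the comparison between the $p$-adic étale and crystalline worlds.
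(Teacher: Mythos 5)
Your proposal correctly identifies the central tension: the hypothesis gives equality of prime-to-$p$ \'etale tensors, while the conclusion concerns crystalline tensors, which are mediated by $p$-adic \'etale tensors on characteristic-zero lifts. But the ``upgrade'' step you flag as the main obstacle is genuinely not resolved, and the resolution you sketch is circular. The rationality of Hodge cycles in characteristic zero --- a cycle is determined by any one of its realizations --- applies to cohomology classes on a \emph{single} abelian variety, or along a \emph{given} morphism between two abelian varieties. Your lifts $\widetilde{x}$ and $\widetilde{x}'$ are a priori unrelated abelian varieties over $\overline{K}$; they merely share a common reduction $\mcA_x=\mcA_{x'}$. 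The prime-to-$p$ \'etale cohomologies of the lifts and of the reduction are all identified via specialization isomorphisms, which is why the hypothesis $s_{\alpha,\ell,x}=s_{\alpha,\ell,x'}$ makes sense. There is no analogous specialization identification at $p$: instead one has \emph{two different} $p$-adic comparison isomorphisms $H^1_{\et}(\mcA_{\widetilde{x},\overline K},\Z_p)\otimes B_{\cris}\cong H^1_{\cris}(\mcA_x/W)\otimes B_{\cris}$ and likewise for $\widetilde{x}'$, one for each integral lift, and they need not be compatible. Equality of the prime-to-$p$ data does not force the two $p$-adic \'etale tensors, living in the cohomology of two distinct varieties, to transport to the same crystalline tensor. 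Your closing claim --- that the $p$-adic \'etale tensor ``extends over $\Oo_E$ to a well-defined class canonically attached to the point of $\mathscr{S}_K^-(G,X)$, hence insensitive to which of $x,x'$ we started from'' --- is exactly what Proposition~\ref{key-tensor-implication} is supposed to establish (via Lemma~\ref{Kisin-1.3.11}(b) it would immediately give $x=x'$), so it cannot be assumed.

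What is missing is a mechanism that produces a morphism \emph{in characteristic zero} between (abelian varieties isogenous to) the two lifts, along which the one-realization-determines-the-cycle principle can legitimately be applied. The paper supplies this via CM lifting (Lemma~\ref{Kisin-CM-lifting} together with Lemma~\ref{Kisin-1.1.19}): the hypothesis $s_{\alpha,\ell,x}=s_{\alpha,\ell,x'}$ first yields $I_x=I_{x'}$ (Lemma~\ref{Ix=Ix'}), which permits choosing the \emph{same} maximal torus for both CM lifts; the torus's cocharacter then identifies the induced filtrations on $\mathbb{D}(\mathscr{G}_x)=\mathbb{D}(\mathscr{G}_{x'})$, and Serre--Tate theory produces an isogeny $\mcA_{\widetilde{y}}\to\mcA_{\widetilde{y}'}$ of CM abelian varieties over a $p$-adic field. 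It is only along this isogeny that the $\ell$-adic agreement propagates to de Rham and then, by specialization, to crystalline agreement. Without this characteristic-zero isogeny, the comparison you propose cannot be carried out, and the proof has a genuine gap.
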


By a CM lifting result on $\mathscr{S}_K(G,X)$ due to \cite{Kisin-mod-p-points} (resp.~\cite{Rong-mod-p}), these cohomological tensors lift, up to $G$-isogenies, to Hodge cycles on CM abelian varieties. 
A key observation is that when two mod $p$ points $x,x'\in \mathscr{S}_K(G,X)(k)$ map to the same image in $\mathscr{S}_K^-(G,X)(k)$, they can be CM-lifted using the same torus, whose cocharacter induces the filtration on the Dieudonn\'e modules $\mathbb{D}(\mcA_x)=\mathbb{D}(\mcA_{x'})$ which then identifies the filtrations on the Dieudonn\'e modules associated to CM-liftable mod $p$ points, giving rise to an isogeny \textit{in characteristic zero} between the two CM lifts. This observation allows us to match up the mod $p$ cristalline tensors using the input from $\ell$-adic \'etale tensors, precisely due to the rationality of Hodge cycles in characteristic zero and the existence of an isogeny lift in characteristic zero.

It is worth pointing out that, in the case where the aforementioned cohomological tensors are algebraic--for example, at points where the Hodge conjecture is true--the family of Hodge cycles (tensors) $s_{\alpha}$ that naturally lives over the Hodge type integral model $\mathscr{S}_K(G,X)$ becomes a flat family of algebraic cycles over $\mathscr{S}_K(G,X)$. In this case, $s_{\alpha,\ell,x}=s_{\alpha,\ell,x'}$ implies that the two algebraic cycles corresponding to the two $\ell$-adic cycles are $\ell$-adic cohomologically equivalent, hence numerically equivalent, and we only need to show that they are also cristalline-cohomologically equivalent. Recall that the Grothendieck Standard Conjecture D says that numerical equivalence and cohomological equivalence agree for algebraic cycles. The proof of \ref{key-tensor-implication} thus follows from a cristalline realisation of this Standard Conjecture D, for points on the integral model of Hodge type and their associated cristalline tensors, which are mod $p$ Hodge cycles. 
Our result essentially establishes, unconditionally, rationality for mod $p$ Hodge cycles that live on mod $p$ points of Hodge type Shimura varieties. 

Finally, we state the following two analogues of our Theorem \ref{Main-Theorem-intro}. Firstly, in the case of parahoric integral models constructed in \cite{Kisin-Pappas}, we impose  
mild technical assumptions from  \cite[6.18]{Rong-mod-p}. We certainly expect this technical assumption from \textit{loc.cit.} to be eventually unnecessary in our Theorem below. 
\begin{theorem}\label{parahoric-analogue-intro}
Let $G_{\Q_p}$ be residually split,  
and $K$ a parahoric level structure. There exists a closed embedding $\mathscr{S}_K(G,X)\hookrightarrow\mathscr{S}_{K'}(\GSp,S^{\pm})$ of integral models, for some suitable $K'$. \\
In particular, the normalization step in the construction of $\mathscr{S}_K(G,X)$ is redundant as the closure $\mathscr{S}_K^-(G,X)$ is already normal. 
\end{theorem}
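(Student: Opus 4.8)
The plan is to follow the proof of Theorem \ref{Main-Theorem-intro} essentially line by line, substituting the parahoric-level inputs of \cite{Kisin-Pappas} and \cite{Rong-mod-p} for the hyperspecial-level inputs of \cite{Kisin-integral-model} and \cite{Kisin-mod-p-points}; the hypotheses that $G_{\Q_p}$ is residually split and that \cite[6.18]{Rong-mod-p} holds are imposed precisely so that these substitutes are available. As in \emph{loc.\ cit.}, the asserted closed embedding is equivalent to the statement that the normalization morphism $\nu\colon\mathscr{S}_K(G,X)\to\mathscr{S}_K^-(G,X)$ is an isomorphism. Since $\nu$ is finite, surjective, and an isomorphism on generic fibres, and since the Kisin--Pappas local model diagram identifies $\mathscr{S}_K(G,X)$, \'etale-locally, with the relevant parahoric local model --- which is normal and flat over $\Oo_E$ with reduced special fibre under our hypotheses --- I would reduce the problem to showing that $\nu$ is injective on $\bar\F_p$-points, whereupon $\mathscr{S}_K^-(G,X)$ is normal, exactly as in the hyperspecial case.

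For the injectivity, suppose $x,x'\in\mathscr{S}_K(G,X)(\bar\F_p)$ have the same image in $\mathscr{S}_K^-(G,X)(\bar\F_p)$. Then $x$ and $x'$ carry the same polarized abelian variety with prime-to-$p$ level structure, so in particular $s_{\alpha,\ell,x}=s_{\alpha,\ell,x'}$; and by the criterion of \cite{Rong-mod-p} (valid under our standing hypotheses) one has $x=x'$ as soon as $s_{\alpha,\cris,x}=s_{\alpha,\cris,x'}$. So the matter reduces to Proposition \ref{key-tensor-implication} in the parahoric setting, i.e.\ to the implication $s_{\alpha,\ell,x}=s_{\alpha,\ell,x'}\Longrightarrow s_{\alpha,\cris,x}=s_{\alpha,\cris,x'}$.

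To prove that implication I would invoke the CM lifting theorem of \cite{Rong-mod-p} for parahoric level: $x$ and $x'$ lift, up to $G$-isogeny, to points of $\mathscr{S}_K(G,X)$ over a $p$-adic CM field having complex multiplication by a common torus $T\subseteq G$ whose cocharacter induces the Hodge filtration on the common Dieudonn\'e module $\mathbb{D}(\mcA_x)=\mathbb{D}(\mcA_{x'})$. The two CM lifts are then joined by an isogeny \emph{in characteristic zero}, which is compatible with the $\ell$-adic tensors by hypothesis; by the rationality of Hodge cycles on CM abelian varieties (Deligne) it is also compatible with the de Rham tensors, hence --- after the $p$-adic comparison isomorphism and specialization to characteristic $p$ --- with the crystalline tensors. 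This gives $s_{\alpha,\cris,x}=s_{\alpha,\cris,x'}$, as needed.

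The hard part is not any single step but the parahoric structure theory underpinning all of them: one needs the Kisin--Pappas integral model together with normality and flatness of the associated parahoric local models, and the mod-$p$-points description and CM-lifting result of \cite{Rong-mod-p}, each of which presently requires $G_{\Q_p}$ residually split and the assumption of \cite[6.18]{Rong-mod-p}. A secondary subtlety, absent when the level is hyperspecial, is that one can only conclude that $\mathscr{S}_K^-(G,X)$ is normal rather than smooth, so the final descent of normality along $\nu$ has to be phrased via Serre's criterion ($R_1+S_2$), using flatness over $\Oo_E$ and reducedness of the special fibre, rather than via smoothness. I would expect the residual-splitness hypothesis to become removable once the relevant results of \cite{Rong-mod-p} are extended, but would not pursue that here.
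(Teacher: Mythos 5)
Your proposal follows the paper's proof essentially line by line: reduce to injectivity of $\nu$ on $\overline{\F}_p$-points, pass through the parahoric cristalline criterion of \cite{Rong-mod-p} to Proposition \ref{key-tensor-implication}, and establish that implication via the CM lifting theorem for parahoric level and rationality of Hodge cycles in characteristic zero. This is exactly the route taken in Corollary \ref{coro-normalization-isomorphism}, with the parahoric substitutions you name.

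The one place you diverge is the final step. You propose establishing normality of $\mathscr{S}_K^-(G,X)$ via Serre's criterion $R_1+S_2$, invoking flatness and reducedness of the special fibre of the local model. The paper does not need this detour: once injectivity of $\nu$ on $\overline{\F}_p$-points is known, it concludes that $\mathscr{S}_K^-(G,X)$ is unibranch, so the completed local ring $\hat U_x$ has a single irreducible component; by Lemma \ref{irreducible-components-parahoric} (Kisin--Pappas, Proposition 4.2.2), that component is isomorphic to the parahoric local model $\widehat M^{\mathrm{loc}}_{G,y}$, which is normal by Pappas--Zhu. So normality is imported wholesale rather than re-derived, and no argument with $R_1+S_2$ is needed at this stage. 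Your phrasing about the local model diagram identifying $\mathscr{S}_K(G,X)$ with the local model is also slightly off: what is actually used is the more precise statement that the irreducible components of the completion of the \emph{closure} $\mathscr{S}_K^-(G,X)$ are each isomorphic to the local model, which is stronger than the usual local-model-diagram statement about the normalized model. These are cosmetic issues; the substance of the proposal is sound.
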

The second analogue concerns toroidal compactifications of integral models of Hodge type constructed in \cite{Keerthi-compactification} (the PEL cases were constructed earlier in \cite{Lan-thesis}). 
Combining our main theorem \ref{Main-Theorem-intro} with an analysis from \cite{Lan-immersion} on the boundary components of toroidal compactifications, one immediately 
obtains the following result. 
\begin{Coro}\label{toroidal-intro}
Let $(G,X)$ be a Shimura datum of Hodge type. 
For each $K\subset G(\A_f)$ 
sufficiently small\footnote{For parahoric level, assume the same condition as in Theorem \ref{parahoric-analogue-intro}.}, there exist collections $\Sigma$ and $\Sigma'$ of cone decompositions, and $K'\subset\GSp(\A_f)$, such that we have a closed embedding of toroidal compactifications of integral models
\[\mathscr{S}_K^{\Sigma}(G,X)\hookrightarrow\mathscr{S}_{K'}^{\Sigma'}(\GSp,S^{\pm})\]
extending the Hodge embedding of integral models. \\
In particular, the normalization step is redundant, and $\mathscr{S}_K^{\Sigma}(G,X)$ can be constructed by simply taking the closure of $\Sh_K(G,X)$ inside $\mathscr{S}_{K'}^{\Sigma'}(\GSp,S^{\pm})$. 
\end{Coro}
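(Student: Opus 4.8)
The plan is to argue, exactly as in the proof of Theorem~\ref{Main-Theorem-intro}, that the only remaining issue is the redundancy of a normalization step, and then to dispose of that step separately over the interior and along the boundary. Recall from \cite{Keerthi-compactification} (and from \cite{Lan-thesis} in the PEL cases) that, once $\Sigma$ is a rational polyhedral cone decomposition refining the pullback of $\Sigma'$, the toroidal compactification $\mathscr{S}_K^{\Sigma}(G,X)$ is \emph{defined} as the normalization of the closure $\mathscr{S}_K^{\Sigma,-}(G,X)$ of $\Sh_K^{\Sigma}(G,X)$ inside the Faltings--Chai toroidal compactification $\mathscr{S}_{K'}^{\Sigma'}(\GSp,S^{\pm})$ of the Siegel integral model. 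On generic fibers, the Hodge embedding is known to extend to a closed embedding $\Sh_K^{\Sigma}(G,X)\hookrightarrow\Sh_{K'}^{\Sigma'}(\GSp,S^{\pm})$ as soon as $\Sigma$ and $\Sigma'$ are chosen compatibly --- this is precisely where the two collections of cone decompositions must be coordinated, and it is classical over the reflex field (functoriality of toroidal compactifications). Hence it suffices to prove that $\mathscr{S}_K^{\Sigma,-}(G,X)$ is already normal; the asserted closed embedding, extending the Hodge embedding of integral models, then follows because the normalization morphism is finite and birational onto it.

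Over the interior this is immediate from the main theorem: the open subscheme of $\mathscr{S}_K^{\Sigma,-}(G,X)$ lying over $\mathscr{S}_{K'}(\GSp,S^{\pm})\subset\mathscr{S}_{K'}^{\Sigma'}(\GSp,S^{\pm})$ is exactly the closure $\mathscr{S}_K^-(G,X)$ of $\Sh_K(G,X)$ in the Siegel integral model, which by Theorem~\ref{Main-Theorem-intro} (resp.~Theorem~\ref{parahoric-analogue-intro} under its hypotheses, in the parahoric case) coincides with $\mathscr{S}_K(G,X)$ and is therefore smooth (resp.~normal). So normality of $\mathscr{S}_K^{\Sigma,-}(G,X)$ only has to be checked in a neighborhood of the boundary.

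For the boundary I would invoke the analysis of \cite{Lan-immersion} as a black box. \'Etale locally along each of its boundary strata, $\mathscr{S}_{K'}^{\Sigma'}(\GSp,S^{\pm})$ is a relative torus embedding (with cone data coming from $\Sigma'$) over an abelian-scheme torsor over a smaller Siegel integral model; the content of \cite{Lan-immersion} is that the Hodge embedding extends to a morphism of boundary charts that respects this stratified toroidal structure, carries the boundary strata of $\mathscr{S}_K^{\Sigma}(G,X)$ into those of $\mathscr{S}_{K'}^{\Sigma'}(\GSp,S^{\pm})$, and matches up the torus-embedding data prescribed by $\Sigma$ and $\Sigma'$. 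Consequently, $\mathscr{S}_K^{\Sigma,-}(G,X)$ is, \'etale locally near the boundary, identified with a relative torus embedding (with cone data $\Sigma$) over an abelian-scheme torsor over the integral model of a smaller boundary Shimura datum, which inherits a hyperspecial (resp.~parahoric) level and whose integral model is normal --- smooth in the hyperspecial case --- by the interior case (equivalently, by \cite{Keerthi-compactification,Kisin-integral-model,Kisin-Pappas}). An abelian-scheme torsor over a normal base is normal, and a torus embedding over a normal base is normal; hence $\mathscr{S}_K^{\Sigma,-}(G,X)$ is normal everywhere, so it equals its normalization $\mathscr{S}_K^{\Sigma}(G,X)$, and the resulting morphism to $\mathscr{S}_{K'}^{\Sigma'}(\GSp,S^{\pm})$ is a closed embedding extending the Hodge embedding.

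The main obstacle I anticipate is the boundary bookkeeping hidden in the last step: one must choose $\Sigma'$, and then $\Sigma$ refining its pullback, so that the induced maps on the rational polyhedral cone data are clean enough that the closure acquires no spurious irreducible components along the boundary and the two boundary stratifications match up cell-by-cell; and one must propagate the parahoric hypotheses of Theorem~\ref{parahoric-analogue-intro} through the boundary charts, where $G$ degenerates to a Levi subgroup and its parahoric to a parahoric of that Levi. Both are exactly what the boundary analysis of \cite{Lan-immersion} is designed to handle, which is why --- granting that input together with Theorem~\ref{Main-Theorem-intro} (resp.~\ref{parahoric-analogue-intro}) --- the corollary follows with essentially no further work.
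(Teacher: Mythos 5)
Your proposal goes the same way as the paper: both treat the corollary as ``Lan's boundary analysis plus the main theorem,'' and both acknowledge that the actual work happens in \cite{Lan-immersion}. The paper likewise says the result ``follows immediately'' from \cite[Thm 2.2]{Lan-immersion} combined with Theorem~\ref{Main-Theorem-intro}, and then sketches Lan's argument for convenience. So there is no disagreement about what the key ingredients are.

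The one place where your framing diverges is the reduction step, and there is a subtle gap hiding there. You propose to show that the closure $\mathscr{S}_K^{\Sigma,-}(G,X)$ is \emph{normal} by asserting that, \'etale locally along the boundary, it is a relative torus embedding over an abelian-scheme torsor over a smaller normal integral model. But that \'etale-local description is what the construction of \cite{Keerthi-compactification} gives for the \emph{normalized} compactification $\mathscr{S}_K^{\Sigma}(G,X)$, not a priori for its closure image in $\mathscr{S}_{K'}^{\Sigma'}(\GSp,S^{\pm})$. In principle the normalization morphism could still fail to be injective along the boundary: two boundary cells of $\mathscr{S}_K^{\Sigma}(G,X)$ could map to overlapping loci of the Siegel chart, in which case the closure would not inherit the torus-embedding structure. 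To transfer the chart description to the closure, one must rule this out --- and that is exactly the content of the disjointness step that the paper spells out: in the \'etale-local diagram $\coprod_j \big(E_{\mathcal{Z},j}(\sigma_j)\times C_{\mathcal{Z},j}\big)\to E_{\mathcal{Z}'}(\tau)\times C_{\mathcal{Z}'}$, each factor is shown to be a closed embedding (via \cite[Prop 4.9, Lemma 4.3]{Lan-immersion} and the interior closed embedding), and the images of distinct $j$'s are then shown to be disjoint by contradiction, using once more that the interior Hodge embedding is a closed immersion. You do flag ``no spurious irreducible components along the boundary'' and the ``cell-by-cell'' matching as obstacles and punt them to Lan, which is defensible since that is what \cite[Thm 2.2]{Lan-immersion} delivers; but as written, your normality argument presupposes precisely that matching, so it either appeals to Lan's full theorem (after which proving normality separately is redundant) or else contains the gap. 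The paper's route --- prove the morphism of compactifications is a closed immersion directly, following Lan's chart analysis, with the disjointness argument made explicit --- is the cleaner of the two.
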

The construction of smooth (resp.~normal) integral models of Shimura varieties plays an important part in the Langlands program. For a more detailed historical exposition, see  
\cite{Kisin-integral-model, Kisin-mod-p-points}. On the other hand, such results as Theorems \ref{Main-Theorem-intro}, \ref{parahoric-analogue-intro} and \ref{toroidal-intro} have been useful in various other aspects of arithmetic geometry, e.g.~in the construction of $p$-adic $L$-functions using Euler systems techniques, in the arithmetic intersection theory of special cycles on Shimura varieties and their integral models as in the Kudla-Rapoport program, arithmetic Gan-Gross-Prasad Program etc.

\subsection{Organization} 
In Chapter \ref{Hodge-type-Chapter}, we recall the basic theory of Hodge type integral models as in \cite{Kisin-integral-model,Kisin-mod-p-points} (resp.~\cite{Kisin-Pappas,Rong-mod-p}), and explain how the question of whether the normalization morphism is an isomorphism essentially reduces to proving the key proposition \ref{key-tensor-implication}. 
In section \ref{CM-lifting-integral-models}, we review CM lifting results on the integral model $\mathscr{S}_K(G,X)$, and prove a few lemmas specific to our setting. 
In section \ref{finishing-up-the-proof-section},  we prove the key 
result \ref{key-tensor-implication} (see \ref{final-num-cris-triviality-motivated}), which leads to our Main Theorems \ref{Main-Theorem-intro} and \ref{parahoric-analogue-intro} (see Corollary \ref{coro-normalization-isomorphism}). We give the toroidal compactification version in section \ref{toroidal-cpct-section}.

\textbf{Acknowledgments.} I would like to thank my advisor Mark Kisin for suggesting this problem to me, and for his continued encouragement. 
I would also 
like to thank Ben Howard for an observation that led to a key improvement in the arguments, and 
Keerthi Madapusi Pera for helpful conversations. 
The author is supported by Harvard University graduate student fellowships.

\section{Integral models of Hodge type}\label{Hodge-type-Chapter}
This section reviews the theory of integral models of Hodge type from \cite{Kisin-integral-model,Kisin-mod-p-points}. 
Note that the $p=2$ case is addressed in \cite{2-adic-integral-model}, henceforward we will not emphasize the difference between the $p>2$ and the $p=2$ cases in our expositions. 
Our expositions will mainly focus on the hyperspecial case, and will mention the parahoric analogues from \cite{Kisin-Pappas,Rong-mod-p} whenever necessary for our proofs. 
\subsection{Setup and notations}
\begin{numberedparagraph}\label{Hodge-type-notations-1st}
We fix a $\Q$-vector space $V$ with a perfect alternating pairing $\psi$. For any $\Q$-algebra $R$, denote by $V_R=V\otimes_{\Q}R$. Let $\GSp=\GSp(V,\psi)$ be the corresponding group of symplectic similitudes, and let $S^{\pm}$ be the Siegel double space. We fix an embedding of Shimura data $i:(G,X)\hookrightarrow(\GSp,S^{\pm})$ and assume that $K_p\subset G(\Q_p)$ is hyperspecial, i.e. $K_p=G_{\Z_{(p)}}(\Z_p)$ for some reductive group $G_{\Z_{(p)}}$ over $\Z_{(p)}$ with generic fibre $G$. By \cite[2.3.1, 2.3.2]{Kisin-integral-model} the embedding $i$ of Shimura data is induced by an embedding $G_{\Z_{(p)}}\hookrightarrow \GL(V_{\Z_{(p)}})$ for some $\Z_{(p)}$-lattice $V_{\Z_{(p)}}\subset V$. By Zarhin's trick, up to replacing $V_{\Z_{(p)}}$ by $\Hom_{\Z_{(p)}}(V_{\Z_{(p)}},V_{\Z_{(p)}})^4$, we can assume that $\psi$ also induces a perfect pairing on $V_{\Z_{(p)}}$ which we again denote by $\psi$. For any $\Z_{(p)}$-algebra $R$, we denote $V_R=V_{\Z_{(p)}}\otimes_{\Z_{(p)}}R$. \\
Take $K_p'=\GSp(V_{\Z_{(p)}})(\Z_p)\subset\GSp(\Q_p)$. For each compact open $K^p\subset G(\A_f^p)$, by  \cite[2.1.2]{Kisin-integral-model}, there exists a compact open $K'^p\subset\GSp(\A_f^p)$ such that $K'^p\supset K^p$ and that the embedding $i$ of Shimura data induces an embedding \[\Sh_K(G,X)\hookrightarrow\Sh_{K'}(\GSp,S^{\pm})\]
of $E$-schemes, where $K'=K'_pK'^p$ and $K=K_pK^p$ and $E=E(G,X)$ is the reflex field. 
\end{numberedparagraph}

\begin{numberedparagraph}
Let $\mathcal{B}$ be an abelian scheme over a $\Z_{(p)}$-scheme $T$, and we define the ``prime-to-$p$ Tate module'' to be the \'etale local system on $T$ given by
$\widehat{V}^p(\mathcal{B})=\underset{p\nmid n}{\varprojlim}\mathcal{B}[n]$. 
We denote ``the rational Tate module away from $p$'' by $\widehat{V}^p(\mathcal{B})_{\Q}=\widehat{V}^p(\mathcal{B})\otimes_{\Z}\Q$. We work in the localized category of the category of abelian schemes over $T$, where the morphisms are given by $\Hom$ groups in the usual category tensored with $\Z_{(p)}$. We call an object in this category an \textit{abelian scheme up to prime to $p$ isogeny}. An isomorphism in this category will be called a $p'$-quasi-isogeny.

Let $\mcA$ be an abelian scheme up to prime to $p$ isogeny, and let $\mcA^*$ be the dual abelian scheme. By a \textit{weak polarization} we mean an equivalence class of $p'$-quasi-isogenies $\lambda: \mcA\xrightarrow{\sim}\mcA^*$ such that some multiple of $\lambda$ is a polarization, and two weak polarizations are equivalent if they differ by an element of $\Z_{(p)}^{\times}$.
\end{numberedparagraph}

\begin{numberedparagraph}\label{moduli-integral-model}
For an abelian scheme up to prime to $p$ isogeny with a weak polarization, i.e. a pair $(\mcA,\lambda)$, we denote by $\underline{\Isom}(V_{\A_f^p},\widehat{V}^p(\mcA)_{\Q})$ the \'etale sheaf on $T$ consisting of isomorphisms $V_{\A_f^p}\xrightarrow{\sim}\widehat{V}^p(\mcA)_{\Q}$ which are compatible with the pairings induced by $\psi$ and $\lambda$ up to an $\A_f^{p\times}$-scalar. 
We define a $K'^p$-level structure on $(\mcA,\lambda)$ to be a section 
\begin{equation}\label{K'-p-level-structure}
    \epsilon_{K'}^p\in \Gamma(T,\underline{\Isom}(V_{\A_f^p},\widehat{V}^p(\mcA)_{\Q})/K'^p)
\end{equation}

We consider the following functor from the category $Sch_{/\Z_{(p)}}$ of $\Z_{(p)}$-schemes to the category of sets
\begin{align*}
    Sch_{/\Z_{(p)}}&\to Sets\\
    T&\mapsto \{\text{Isomorphism classes of triples }(\mcA/T, \lambda, \epsilon_{K'}^p)\}
\end{align*}
For $K'^p$ sufficiently small, the above functor is representable by a smooth $\Z_{(p)}$-scheme, which we denote by $\mathscr{S}_{K'}(\GSp,S^{\pm})_{\Z_{(p)}}$, whose $\Q$-fibre is precisely the Siegel modular variety $\Sh_{K'}(\GSp,S^{\pm})$. 

Let $\Oo:=\Oo_E$ denote the ring of integers of the reflex field $E=E(G,X)$. Let $v|p$ be a prime of $E$ lying over $p$, and denote by $\Oo_{(v)}$ the localization at $v$. We denote by
$\mathscr{S}_{K'}(\GSp,S^{\pm})_{\Oo_{(v)}}$ the base change of $\mathscr{S}_{K'}(\GSp,S^{\pm})_{\Z_{(p)}}$ to $\Oo_{(v)}$. Denote by $\mathscr{S}_K^{-}(G,X)$ the closure of $\Sh_K(G,X)$ in $\mathscr{S}_{K'}(\GSp,S^{\pm})_{\Oo_{(v)}}$, and by $\mathscr{S}_K(G,X)$ the normalization of $\mathscr{S}_K^-(G,X)$. We call the normalization $\mathscr{S}_K(G,X)$ the \textit{integral model of Hodge type} for the Hodge type Shimura datum $(G,X)$. In the following, we shall denote by $\nu:\mathscr{S}_K(G,X)\to\mathscr{S}_K^-(G,X)$ the normalization morphism.
\end{numberedparagraph}

\subsection{Cohomological tensors over the integral model}\label{section-cohomological-tensors}
\begin{numberedparagraph}\label{tensor-stablizer}
Recall from \ref{Hodge-type-notations-1st} that the embedding of Shimura data $i$ is induced by a closed embedding 
$G_{\Z_{(p)}}\hookrightarrow\GL(V_{\Z_{(p)}})$
for some $\Z_{(p)}$-lattice $V_{\Z_{(p)}}\subset V$. By \cite[Proposition 1.3.2.]{Kisin-integral-model}, there exists a finite collection of tensors $(s_{\alpha})\subset V_{\Z_{(p)}}^{\otimes}$ such that the subgroup $G_{\Z_{(p)}}\subset \GL(V_{\Z_{(p)}})$ is the scheme-theoretic stabilizer of these tensors $(s_{\alpha})$. 
The moduli interpretation of $\mathscr{S}_{K'}(\GSp,S^{\pm})$ as the moduli space of polarized abelian schemes gives us a universal abelian scheme $\mathscr{A}\to \mathscr{S}_{K'}(\GSp,S^{\pm})$. Therefore, we can pullback this universal abelian scheme to an abelian scheme 
\begin{equation}\label{abelian-scheme-over-integral-model}
h:\mathscr{A}\to\mathscr{S}_K(G,X)
\end{equation}
which, by abuse of notation, we still denote as $\mathscr{A}$. 
Let $V_B=R^1h_*^{\an}\Z_{(p)}$ and $s_{\alpha}$ have Betti realisations $s_{\alpha,B}\in V_B^{\otimes}$. Consider the first relative de Rham cohomology $\mathcal{V}=R^1h_*\Omega^{\bullet}$ of $\mathscr{A}$. The $s_{\alpha}$ can be viewed as parallel sections of the complex analytic vector bundle associated to $\mathcal{V}^{\otimes}$, which lie in the $\Fil^0$ part of the Hodge filtration. By \cite[Propositions 2.2.2, 2.3.9.]{Kisin-integral-model}, these sections have de Rham realisations $s_{\alpha,\dR}\in\mathcal{V}^{\otimes}$ defined over $\Oo_{(v)}$. 

On the other hand, for a prime $\ell\neq p$, consider the \'etale local system $\mathcal{V}_{\ell}$ with $\Q_{\ell}$-coefficients on $\mathscr{S}_K(G,X)$ given by $\mathcal{V}_{\ell}:=R^1{h_{\et}}_*\Q_{\ell}$, where $h_{\et}$ is the map on \'etale sites induced from $h$. The tensors $s_{\alpha}$ have $\ell$-adic \'etale realisations $s_{\alpha,\ell}\in\mathcal{V}_{\ell}^{\otimes}$ which descend to $\Oo_{(v)}$. Moreover, for an $\Oo_{(v)}$-scheme $T$ and $x\in\mathscr{S}_K(G,X)(T)$, we define $s_{\alpha,\ell,x}$ to be the pullback of the section $s_{\alpha,\ell}$ (defined over $\Oo_{(v)}$) to $T$. We denote by $\mcA_x$ the pullback of $\mathscr{A}$ to $x$, thus $s_{\alpha,\ell,x}\in H_{\et}^1(\mcA_{x,\overline{\kappa}},\Q_{\ell})^{\otimes}$, where $\kappa:=\kappa(x)$.

At the prime $p$, we consider the local system $\mathcal{V}_p:=R^1{h_{\eta\et}}_*\Z_p$,
where $h_{\eta}$ the generic fibre of $h$, and the tensors $s_{\alpha}$ have $p$-adic \'etale realisations $s_{\alpha,p}\in \mathcal{V}_{p}^{\otimes}$ which descend to $E$. Likewise, for an $E$-scheme $T$ and $x\in \mathscr{S}_K(G,X)(T)$, we define $s_{\alpha,p,x}$ to be the pullback of $s_{\alpha,p}$ to $T$. Thus $s_{\alpha,p,x}\in H_{\et}^1(\mcA_{x,\overline{\kappa}},\Z_p)^{\otimes}$ where $\kappa:=\kappa(x)$.
\end{numberedparagraph}

\begin{numberedparagraph}\label{promoting-section-paragraph}
For a point $x\in \Sh_K(G,X)(T)\hookrightarrow\Sh_{K'}(\GSp,S^{\pm})(T)$, then the image of $x$ corresponds to a triple $(\mcA_x/T,\lambda,\epsilon_{K'}^p)$, where $\mcA_x$ is an abelian scheme up to prime-to-$p$ isogeny over $T$ and $\epsilon_{K'}^p$ is a section as defined in \ref{K'-p-level-structure}. For any finite index subgroup $K'_1$ such that  $K\subset K_1'\subset K'$, since we have the embedding 
\[\Sh_K(G,X)\hookrightarrow\Sh_{K_1'}(\GSp,S^{\pm})\]
(the image of) $x$ also gives rise to a point of $K_1'$-level structure, i.e. a section $\epsilon_{K_1'}^p\in \Gamma(T,\underline{\Isom}(V_{\A_f^p},\widehat{V}^p(\mcA_x)_{\Q})/{K_1'}^p)$. We define
\begin{equation}\label{promoted-section}
\epsilon_K^p:=\underset{\substack{K_1' \text{s.t.} \\
K\subset K_1'\subset K'}}{\varprojlim}\epsilon_{K_1'}^p\in \Gamma(T,\underline{\Isom}(V_{\A_f^p},\widehat{V}^p(\mcA_x)_{\Q})/K)
\end{equation}
Therefore, given a section $\epsilon_{K'}^p$, we can ``promote'' it to a section $\epsilon_K^p$ as defined in \ref{promoted-section} above, for $K\subset K'$. 
\end{numberedparagraph}

\begin{remark}\label{promoted-level-structure-ell-adic}
The ``promoted'' level structures are realized in terms of the $\ell$-adic tensors, in the sense that
$\epsilon_K^p\colon s_{\alpha}\mapsto (s_{\alpha,\ell})_{\ell\neq p}$.
\end{remark}

\begin{numberedparagraph}\label{fields-notations-integral-model}
We fix an algebraic closure $\overline{\Q}$ of $\Q$. For each place $v$ of $\Q$, we also fix an algebraic closure $\overline{\Q}_v$ of $\Q_v$ and embeddings $\overline{\Q}\hookrightarrow\overline{\Q}_v$.
Let $\overline{\F}_p$ be the residue field of $\overline{\Q}_p$. Denote $L:=\Frac W(\overline{\F}_p)$ and $\Q_p^{\unr}\subset L$ the subfield of elements algebraic over $\Q_p$. We choose a fixed algebraic closure $\overline{L}$ of $L$, and an embedding $\overline{\Q}_p\hookrightarrow \overline{L}$ of $\Q_p^{\unr}$-algebras.  

Recall the notation $E:=E(G,X)$ for the reflex field of $(G,X)$. Let $E_p$ be the completion of $E$ at the prime corresponding to $\overline{\Q}\hookrightarrow\overline{\Q}_p$, and $k\subset\overline{\F}_p$ be a subfield containing the residue field $k_E$ of $E_p$. Let $W:=W(k)$ and $K\subset\overline{L}$ be a finite, totally ramified extension of $W[1/p]$. Take $x\in \mathscr{S}_K(G,X)(k)$ and let $\Tilde{x}\in \Sh_K(G,X)(K)$ be a point that specializes to $x$. Let $\overline{K}$ be the algebraic closure of $K$ in $\overline{L}$. As before, $\mcA_{\Tilde{x}}$ is the pullback of $\mathscr{A}$ to $\Tilde{x}$, and $\mcA_{\Tilde{x},\overline{K}}$ the geometric fibre of $\mathscr{A}$ over $\Tilde{x}$. Thus we have $p$-adic \'etale tensors $s_{\alpha,p,\Tilde{x}}\in H_{\et}^1(\mcA_{\Tilde{x},\overline{K}},\Z_p)^{\otimes}$, which are $\Gal(\overline{K}/K)$-invariant. 
\end{numberedparagraph}
We will need the following result. 
\begin{lem}\label{Kisin-mod-p-Prop-1.3.7.}(\cite[Proposition 1.3.7.]{Kisin-mod-p-points}) (1) Under the $p$-adic comparison isomorphism
\[H_{\et}^1(\mcA_{\widetilde{x},\overline{K}},\Z_p)\otimes_{\Z_p}B_{\cris}\xrightarrow{\sim}H_{\cris}^1(\mcA_x/W)\otimes_WB_{\cris}\]
the $s_{\alpha,p,\Tilde{x}}$ map to $\varphi$-invariant tensors $s_{\alpha,\cris,\Tilde{x}}\in \Fil^0(H_{\cris}^1(\mcA_x/W)^{\otimes})$. \\
(2) There is a $W$-linear isomorphism
\[H_{\et}^1(\mcA_{\Tilde{x},\overline{K}},\Z_p)\otimes_{\Z_p}W\xrightarrow{\sim}H_{\cris}^1(\mcA_x/W)\]
taking $s_{\alpha,p,\Tilde{x}}$ to $s_{\alpha,\cris,\Tilde{x}}$. In particular, the $s_{\alpha,\cris,\Tilde{x}}$ define a reductive group scheme $G_W\subset \GL(H_{\cris}^1(\mcA_x/W))$ which is isomorphic to $G_{\Z_{(p)}}\otimes_{\Z_{(p)}}W$.\\
(3) The filtration on $H_{\cris}(\mcA_x/W)\otimes_Wk$ is given by $\mu_0^{-1}$ where $\mu_0$ is a $G_W$-valued cocharacter conjugate to the Hodge cocharacter $\mu_h$ for $h\in X$. 
\end{lem}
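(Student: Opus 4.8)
\emph{Proof strategy} (this is \cite[Proposition 1.3.7.]{Kisin-mod-p-points}; we indicate how one argues). Since $\widetilde x$ specializes to $x$, the point $\widetilde x\in\mathscr{S}_K(G,X)(K)$ extends to a morphism $\Spec\mathcal{O}_K\to\mathscr{S}_K(G,X)$, and pulling back $\mathscr{A}$ gives an abelian scheme over $\mathcal{O}_K$ with generic fibre $\mcA_{\widetilde x}$ and special fibre $\mcA_x$; in particular $\mcA_{\widetilde x}$ has good reduction, so $T:=H^1_{\et}(\mcA_{\widetilde x,\overline K},\Z_p)$ is a crystalline $\Gal(\overline K/K)$-representation with Hodge--Tate weights in $\{0,1\}$, and Fontaine's comparison identifies $T\otimes_{\Z_p}B_{\cris}$ with $H^1_{\cris}(\mcA_x/W)\otimes_WB_{\cris}$ compatibly with $\varphi$, filtration and Galois action. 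The plan is: (i) transfer the Galois-invariant tensors $s_{\alpha,p,\widetilde x}$ across this comparison by means of integral $p$-adic Hodge theory, obtaining \emph{integral} $\varphi$-invariant crystalline tensors; (ii) upgrade this to a $W$-linear, tensor-matching isomorphism via a torsor-triviality argument; (iii) read the filtration off the de Rham comparison together with the definition of the Hodge cocharacter.

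For (1), attach to $T$ its Breuil--Kisin module $\mathfrak{M}=\mathfrak{M}(T)$ over $\mathfrak{S}=W[[u]]$ (Kisin's classification of lattices in crystalline representations; in the present range of Hodge--Tate weights this is essentially the Dieudonn\'e module of the $p$-divisible group $\mcA_x[p^\infty]$). Since $T\mapsto\mathfrak{M}(T)$ is an exact tensor functor compatible with duals, the $\Gal(\overline K/K)$-invariant maps $\mathbf 1\to T^{\otimes}$ given by $s_{\alpha,p,\widetilde x}$ correspond to $\varphi$-equivariant maps $\mathbf 1\to\mathfrak{M}^{\otimes}$, i.e.\ to Frobenius-fixed tensors $\widetilde s_\alpha\in\mathfrak{M}^{\otimes}$. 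Base changing along $\mathfrak{S}\to W$, $u\mapsto 0$, and using the canonical identification of $\mathfrak{M}/u\mathfrak{M}$ (with its Frobenius) with $H^1_{\cris}(\mcA_x/W)$, the $\widetilde s_\alpha$ specialize to $\varphi$-invariant tensors $s_{\alpha,\cris,\widetilde x}\in H^1_{\cris}(\mcA_x/W)^{\otimes}$; comparing the two constructions over $B_{\cris}$ identifies these with the images of $s_{\alpha,p,\widetilde x}$ under the comparison isomorphism, yielding simultaneously the integrality and the Frobenius-invariance in (1). Finally, under the de Rham comparison $H^1_{\cris}(\mcA_x/W)\otimes_WK\cong H^1_{\dR}(\mcA_{\widetilde x}/K)$ the tensor $s_{\alpha,\cris,\widetilde x}$ becomes $s_{\alpha,\dR,\widetilde x}$, which lies in $\Fil^0$ because its Betti avatar $s_{\alpha,B}$ does; this is \cite[Propositions 2.2.2, 2.3.9.]{Kisin-integral-model}.

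For (2), recall from \cite[Proposition 1.3.2.]{Kisin-integral-model} that $G_{\Z_{(p)}}\subset\GL(V_{\Z_{(p)}})$ is the scheme-theoretic stabilizer of $(s_\alpha)$ and that the local system underlying $\mathscr{A}$ has structure group $G$; hence the $\Z_p$-scheme $\underline{\Isom}\big((V_{\Z_p},(s_\alpha)),(T,(s_{\alpha,p,\widetilde x}))\big)$ of tensor-preserving isomorphisms is a $G_{\Z_p}$-torsor (after $-\otimes_{\Z_p}\overline{\Q}_p$, or $-\otimes\C$ via a complex embedding, it becomes the trivial torsor coming from the Betti comparison). By part (1) and the identifications above over $\mathfrak{S}$ and $B_{\cris}$, the $W$-scheme $P:=\underline{\Isom}\big((V_{\Z_{(p)}},(s_\alpha))\otimes_{\Z_{(p)}}W,(H^1_{\cris}(\mcA_x/W),(s_{\alpha,\cris,\widetilde x}))\big)$ is a $G_W$-torsor. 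One then checks $P$ is trivial --- e.g.\ by realizing it as the reduction mod $u$ of a $G_{\mathfrak{S}}$-torsor that is trivial over $\mathfrak{S}[1/p]$ (since $(T,(s_{\alpha,p}))\cong(V_{\Z_p},(s_\alpha))$ over the algebraically closed field $\overline K$), or after the base change $k\rightsquigarrow\overline{\F}_p$ via Lang's theorem --- which gives the desired tensor-matching isomorphism $H^1_{\et}(\mcA_{\widetilde x,\overline K},\Z_p)\otimes_{\Z_p}W\xrightarrow{\ \sim\ }H^1_{\cris}(\mcA_x/W)$. Transporting $(s_\alpha)$ through it identifies the scheme-theoretic stabilizer $G_W\subset\GL(H^1_{\cris}(\mcA_x/W))$ of $(s_{\alpha,\cris,\widetilde x})$ with $G_{\Z_{(p)}}\otimes_{\Z_{(p)}}W$ (stabilizers commuting with the flat base change $\Z_{(p)}\to W$), which is reductive.

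For (3), the Hodge filtration $\Fil^1H^1_{\dR}(\mcA_{\widetilde x}/K)\subset H^1_{\dR}(\mcA_{\widetilde x}/K)$ is, by construction of the period map and of $X$ as a conjugacy class of Hodge cocharacters, split by a $G_K$-valued cocharacter in the conjugacy class of $\mu_h^{-1}$, $h\in X$ (the inverse being the standard cohomological convention). Reducing mod $\m_K$ and applying the comparison of (2), the induced filtration on $H^1_{\cris}(\mcA_x/W)\otimes_Wk=H^1_{\dR}(\mcA_x/k)$ is split by $\mu_0^{-1}$ for a $G_k$-valued cocharacter $\mu_0$; and $\mu_0$ is $G_k$-conjugate to the reduction mod $v$ of $\mu_h$ because the conjugacy class $[\mu_h]$ is defined over the reflex field $E$ and specializes to a single $G_k$-conjugacy class. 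The statement about $\mu_0$ extending to $G_W$ then follows from smoothness of $G_W$ and formal lifting of cocharacters. I expect the main obstacle to be step (i): passing from $\varphi$-invariant tensors over $B_{\cris}$ to \emph{integral} $\varphi$-invariant tensors in $H^1_{\cris}(\mcA_x/W)^{\otimes}$, which genuinely requires integral $p$-adic Hodge theory (Breuil--Kisin modules) because $K$ may be ramified, taking one outside the Fontaine--Laffaille range; the torsor triviality in (2) over a possibly non-algebraically-closed residue field is a secondary technical point.
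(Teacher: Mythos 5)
The paper gives no proof of this lemma; it is cited verbatim from \cite[Prop.~1.3.7]{Kisin-mod-p-points}. Your sketch correctly reconstructs the structure of Kisin's argument there: Breuil--Kisin modules to obtain integral $\varphi$-invariant crystalline tensors, a $G$-torsor triviality argument for the $W$-linear tensor-matching isomorphism, and the Hodge cocharacter read off from the de Rham filtration, and so the overall strategy matches the reference.

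Two minor imprecisions in step (ii). First, you assert that $P$ is a $G_W$-torsor ``by part (1) and the identifications over $\mathfrak{S}$,'' and then separately argue triviality; but in Kisin's proof these come together, since the hard point is precisely that the $\underline{\Isom}$-scheme over $\mathfrak{S}=W[[u]]$ is a \emph{trivial} $G_{\mathfrak{S}}$-torsor (this is \cite[Cor.~1.4.3]{Kisin-integral-model}, used in \cite[Prop.~1.3.4]{Kisin-mod-p-points}; the argument extends a section from $\mathfrak{S}[1/p]$ using that torsors under reductive groups over a $2$-dimensional regular local ring trivial away from the closed point are trivial). Reducing mod $u$ then hands you the trivial $G_W$-torsor directly, with no separate nonemptiness step needed. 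Second, the ``Lang's theorem'' alternative is not applicable as stated: in this paper $k\subset\overline{\F}_p$ is an arbitrary subfield containing $k_E$ and need not be finite. One should instead cite Steinberg's theorem (every algebraic extension of $\F_p$ is $C_1$, so $H^1(k,G_k)$ vanishes for $G_k$ connected reductive) or simply carry out the torsor argument after base change to $W(\overline{\F}_p)$ and descend. Neither issue changes the conclusion; both are remedied by the $\mathfrak{S}$-torsor argument you already name as your primary route.
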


\begin{numberedparagraph}\label{normalization-same-image-point}
Let $x,\widetilde{x}$ be defined as above. Suppose we also have another point $\widetilde{x}'\in \mathscr{S}_K(G,X)(K)$ which specializes to $x'\in \mathscr{S}_K(G,X)(k)$. Suppose moreover that both $x,x'\in\mathscr{S}_K(G,X)(k)$ map to the same image $\overline{x}\in\mathscr{S}_K^-(G,X)(k)$ under the normalization map $\nu$. We shall  make use of the following property about the irreducible components of $\mathscr{S}_K^-(G,X)$. In the hyperspecial case, we have:
\begin{lem}\label{Kisin-Prop-2.3.5}(\cite[Proposition 2.3.5]{Kisin-integral-model})
Let $x\in\mathscr{S}_K^-(G,X)$ be a closed point with characteristic $p$ residue field. Denote by $\widehat{U}_x$ the completion of $\mathscr{S}_K^-(G,X)$ at $x$. Then the irreducible components of $\widehat{U}_x$ are formally smooth over $\Oo_{(v)}$.
\end{lem}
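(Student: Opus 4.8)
The plan is to reduce Lemma \ref{Kisin-Prop-2.3.5} to the deformation-theoretic description of $\mathscr{S}_K^-(G,X)$ near $x$ and then to the structure theory of $p$-divisible groups with Tate tensors. First I would pass to the complete local ring: the henselization (or completion) $\widehat{U}_x$ of $\mathscr{S}_K^-(G,X)$ at the characteristic-$p$ closed point $x$ embeds, as a closed subscheme, into the completion $\widehat{R}_{\mathscr{A}}$ of the Siegel integral model $\mathscr{S}_{K'}(\GSp,S^{\pm})_{\Oo_{(v)}}$ at the image point, which by Serre--Tate theory is the universal deformation ring of the $p$-divisible group $\mcA_x[p^\infty]$ with its polarization, hence a power series ring over $W$ (or $\Oo_{(v)}$). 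Since $\mathscr{S}_K^-$ is by definition the Zariski closure of the characteristic-zero Shimura variety $\Sh_K(G,X)$, the ideal cutting out $\widehat{U}_x$ inside $\widehat{R}_{\mathscr{A}}$ is determined by the condition that the crystalline (equivalently, via Lemma \ref{Kisin-mod-p-Prop-1.3.7.}, the Dieudonné-module) realizations of the tensors $s_\alpha$ remain Tate tensors, i.e.\ lie in $\Fil^0$ and are $\varphi$-invariant, along the deformation. The key input is that these tensors $s_{\alpha,\cris}$ cut out a reductive subgroup $G_W \subset \GL(H^1_{\cris}(\mcA_x/W))$ isomorphic to $G_{\Z_{(p)}}\otimes W$ (Lemma \ref{Kisin-mod-p-Prop-1.3.7.}(2)), and the filtration is induced by a cocharacter $\mu_0$ conjugate to the Hodge cocharacter $\mu_h$ (part (3)).

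Next I would invoke Faltings' (and Kisin's) deformation theory: the subfunctor of the Siegel deformation space along which the $s_\alpha$ deform to Tate tensors is pro-represented by a formally smooth $W$-algebra, namely it is (non-canonically) identified with the completion at the identity of the opposite unipotent $U_{G}^-$ attached to $(G_W,\mu_0)$ — concretely $\Spf W[[u_1,\dots,u_d]]$ with $d = \dim U_G^- = \dim \mathscr{S}_K(G,X)$. This is precisely the content of \cite[1.5]{Kisin-integral-model} (building on \cite[\S7]{Faltings-CrystallineAndPDivisible}): the tensors single out a smooth "$G$-adapted" deformation space inside the Siegel one. Now each irreducible component $Z$ of $\widehat{U}_x$ is a closed integral subscheme of $\widehat{R}_{\mathscr{A}}$ containing the generic fibre of this $G$-adapted deformation space (because $\Sh_K(G,X)$ is dense in $\mathscr{S}_K^-$ and the characteristic-zero fibre of the $G$-adapted space is exactly the completion of $\Sh_K(G,X)$). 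Comparing dimensions — $\dim Z \le \dim \widehat{U}_x[1/p] = \dim \Sh_K(G,X) = d$, while $Z$ contains the $d$-dimensional smooth formal subscheme — forces $Z$ to equal the $G$-adapted deformation space, which is formally smooth over $\Oo_{(v)}$. Hence every irreducible component of $\widehat{U}_x$ is formally smooth over $\Oo_{(v)}$.

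The step I expect to be the main obstacle is making rigorous the identification of each irreducible component of $\widehat{U}_x$ with the $G$-adapted (Faltings) deformation space, rather than just containing it or being contained in a larger locus. One has to argue that the closure of $\Sh_K(G,X)$ in the formally smooth Siegel deformation space is \emph{exactly} the locus where the $s_\alpha$ remain Tate — the inclusion "closure $\subseteq$ Tate locus" is formal, but the reverse requires knowing that every formal point of the Tate locus over $\Oo_{(v)}$ (or at least over a dense set) actually lifts to a point of $\Sh_K(G,X)$, which is essentially the statement that the characteristic-zero fibre of the $G$-adapted deformation space is the completed local ring of the Shimura variety. This uses the theory of canonical (Hodge) lifts and the fact that in characteristic zero the Tate tensors are honest Hodge tensors recognizable on the Betti/de Rham lattice, so the $G$-structure propagates; once this equality of formal schemes is established, formal smoothness of the components is immediate. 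A secondary technical point is handling possibly several irreducible components of $\widehat{U}_x$: a priori $\widehat{U}_x$ could be reducible even though each component turns out smooth, and one should be careful that the dimension count is applied component-by-component and that each component dominates a component of the characteristic-zero fibre, which it does since $\mathscr{S}_K^-$ is flat over $\Oo_{(v)}$ (being a closure in a flat scheme of an open generic fibre).
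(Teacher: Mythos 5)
Your proposal reproduces the structure of Kisin's cited proof of \cite[Prop.~2.3.5]{Kisin-integral-model}: Serre--Tate identifies the completion of the Siegel model with the universal deformation ring of $\mcA_x[p^\infty]$, Faltings' deformation theory with tensors (as reworked in \cite[\S1.5]{Kisin-integral-model}) produces the formally smooth $G$-adapted quotient $\Spf R_G$ cut out by the cocharacter $\mu_0$, the extension of $s_{\alpha,\dR}$ over the integral model forces each irreducible component of $\widehat{U}_x$ into $\Spf R_G$, and a flatness/dimension count together with the integrality of $R_G$ forces equality component by component. This is the same approach as the source, so the proposal is correct.

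One small logical remark on your "main obstacle" paragraph: you do not in fact need the reverse inclusion (Tate locus $\subseteq$ closure), nor the assertion that the characteristic-zero fibre of the $G$-adapted space coincides with the completion of $\Sh_K(G,X)$. Once you know (i) each component $Z$ of $\widehat{U}_x$ satisfies $Z \subseteq \Spf R_G$, (ii) $Z$ is flat over $\Oo_{(v)}$ of relative dimension $d = \dim \Sh_K(G,X)$, and (iii) $R_G$ is a formally smooth domain of the same relative dimension, the equality $Z = \Spf R_G$ is automatic. Also, the inclusion you label "formal" (closure $\subseteq$ Tate locus) is actually where the substantive $p$-adic Hodge-theoretic input sits: it rests on the construction of the integral de Rham tensors (\cite[2.2.2, 2.3.9]{Kisin-integral-model}) and the comparison identifying them with the crystalline tensors, which is the real content of the argument rather than a formality.
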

In the parahoric case, the irreducible components are normal. 
\begin{lem}\label{irreducible-components-parahoric}(\cite[Prop.4.2.2]{Kisin-Pappas})
Let $\widehat{U}_{\overline{x}}$ be the completion of $\mathscr{S}_K^-(G,X)_{\Oo_{E^{\unr}}}$ at $\overline{x}$. Then the irreducible component of $\widehat{U}_{\overline{x}}$ containing $x$ is isomorphic to $\widehat{M}_{G,y}^{\mathrm{loc}}$ (which are normal by \cite{Pappas-Zhu}) as formal schemes over $\Oo_{E^{\unr}}$. 
\end{lem}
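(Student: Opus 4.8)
The plan is to reduce the statement to the identification of the completion of the parahoric integral model $\mathscr{S}_K(G,X)$ with the local model $M_{G,y}^{\mathrm{loc}}$, which is the analogue in the parahoric setting of the ``local model diagram'' that underlies \cite{Kisin-integral-model}. First I would recall from \cite{Kisin-Pappas} that, after base change to $\Oo_{E^{\unr}}$, there is a smooth morphism (the local model diagram) relating a neighbourhood of $x$ in $\mathscr{S}_K(G,X)$ to the local model $M_{G,y}^{\mathrm{loc}}$ attached to the parahoric data at $y$; concretely, there is an auxiliary smooth scheme $\widetilde{\mathscr{S}}$ with compatible smooth surjections to $\mathscr{S}_K(G,X)$ and to $M_{G,y}^{\mathrm{loc}}$. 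Passing to completions at $x$ and an appropriate point of the local model, smoothness of these morphisms means that the completed local rings differ only by adjoining power-series variables, so the formal completion $\widehat{\mathscr{S}}_{K,x}$ is isomorphic to $\widehat{M}_{G,y}^{\mathrm{loc}}$ after adding a power series factor; since $M_{G,y}^{\mathrm{loc}}$ is normal by \cite{Pappas-Zhu}, so is each irreducible component of $\widehat{\mathscr{S}}_{K,x}$, and more precisely the irreducible component through $x$ is isomorphic as a formal scheme over $\Oo_{E^{\unr}}$ to (the corresponding component of) $\widehat{M}_{G,y}^{\mathrm{loc}}$.

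The second, and more delicate, step is to transport this statement from $\mathscr{S}_K(G,X)$ to its closure $\mathscr{S}_K^-(G,X)$, i.e.\ to show that the normalization map $\nu$ does not change the completed local ring at the relevant point or, what amounts to the same thing, that $\widehat{U}_{\overline x}$ and $\widehat{(\mathscr{S}_K)}_x$ have the same irreducible components near $x$. Here I would argue as follows. The map $\nu$ is finite and is an isomorphism on generic fibres; hence on each irreducible component it is a finite birational morphism onto its image. The key point is that the irreducible component of $\widehat{\mathscr{S}}_{K,x}$ through $x$ is already normal (by the local model computation above) and flat over $\Oo_{E^{\unr}}$ with reduced generic fibre, so a finite birational morphism from it to the corresponding component of $\widehat{U}_{\overline x}$ must be an isomorphism onto that component once we know the target is also flat with the same generic fibre — but that is automatic since $\mathscr{S}_K^-(G,X)$ is defined as a scheme-theoretic closure inside the flat Siegel model. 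Thus the component of $\widehat{U}_{\overline x}$ through $x$ is identified with the normal formal scheme $\widehat{M}_{G,y}^{\mathrm{loc}}$.

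I expect the main obstacle to be precisely this comparison of completions across the normalization map: one must be careful that $\mathscr{S}_K^-(G,X)$ may a priori have \emph{more} irreducible components through $\overline x$ than $\mathscr{S}_K(G,X)$ has through $x$ (several points of the normalization can lie over $\overline x$), so the correct statement is about the single component containing the chosen preimage $x$, and the argument must track which component of the normalization maps to which component of the closure. This is handled by the technical assumption that $G_{\Q_p}$ is residually split together with \cite[6.18]{Rong-mod-p}, which guarantees that the local model $M_{G,y}^{\mathrm{loc}}$ is well-behaved (e.g.\ its special fibre is reduced, cf.\ \cite{Pappas-Zhu}) so that the birational-finite-to-normal argument applies. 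Once the identification with $\widehat{M}_{G,y}^{\mathrm{loc}}$ is in place, normality is immediate from \cite{Pappas-Zhu} and there is nothing further to check.
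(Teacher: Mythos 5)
This lemma is a citation (to \cite[Prop.~4.2.2]{Kisin-Pappas}); the paper does not prove it, so the relevant comparison is with the original Kisin--Pappas argument. Your proposal does not reproduce it, and both of its main steps contain genuine gaps.

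\textbf{Step 1 is circular (or at least inverts the logical order).} You take as input ``the local model diagram'' relating a neighbourhood of $x$ in $\mathscr{S}_K(G,X)$ smoothly to $M_{G,y}^{\mathrm{loc}}$. But in \cite{Kisin-Pappas} that diagram is \emph{derived} from the identification of the completed local rings of the \emph{closure} with the local model; the content of Prop.~4.2.2 is precisely what makes the local model diagram available. The actual proof works directly with $\mathscr{S}_K^-(G,X)$: one shows (via Grothendieck--Messing and the theory of $G$-adapted deformations, \cite[\S3]{Kisin-Pappas}) that the ``versal $G$-deformation ring'' $R_G$ of the $p$-divisible group with crystalline tensors is isomorphic to the completion of the local model, embeds that $\Spf R_G$ and $\widehat{U}_{\overline x}$ in the Siegel deformation space, and compares them using flatness over $\Oo_{E^{\unr}}$ together with the fact that $\mathscr{S}_K^-$ is by construction the $\Oo$-flat closure of its generic fibre, on which the tensors are defined. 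No detour through the normalization is made, and none should be made, because one has not yet shown the normalization map is nice.

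\textbf{Step 2 (the transport across $\nu$) is false as stated.} You claim that a finite birational morphism from a normal formal scheme onto a flat formal scheme over $\Oo_{E^{\unr}}$ with the same generic fibre must be an isomorphism. This is exactly what fails for non-normal targets: take $A=\Z_p[[t]]$ and $B=\Z_p[[t^2,t^3,pt]]\subset A$. Then $B\hookrightarrow A$ is finite, $\operatorname{Frac}(A)=\operatorname{Frac}(B)$, $A$ is normal, $B$ is flat over $\Z_p$, and $B[1/p]=A[1/p]$ (since $t=pt/p$), yet $t\notin B$, so $B\neq A$. Normality of the target is precisely the conclusion of the lemma; assuming the morphism is an isomorphism therefore assumes what needs to be proved. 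The appeal to the residually-split assumption and to reducedness of the special fibre of $M_{G,y}^{\mathrm{loc}}$ does not repair this: reducedness of the special fibre of the \emph{local model} gives no information about normality of the branch of the \emph{closure} at $\overline x$, which is the scheme on the ``bad'' side of the birational map. (Moreover, the residually-split hypothesis in this paper comes from \cite[6.18]{Rong-mod-p} and is needed for the CM-lifting and isogeny-class statements downstream; it is not the hypothesis under which Kisin--Pappas prove Prop.~4.2.2.)

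In short, the correct strategy is to identify the component of $\widehat{U}_{\overline x}$ directly with $\Spf R_G\cong \widehat M_{G,y}^{\mathrm{loc}}$ inside the Siegel deformation space, using flatness of $\mathscr{S}_K^-$ and the existence of crystalline tensors on its generic fibre; one does not first prove the statement for $\mathscr{S}_K$ and then descend.
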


In \ref{Kisin-Prop-2.3.5} and \ref{irreducible-components-parahoric},  the term ``irreducible components'' of the formal scheme $\hat{U}_x$ refers to the irreducible components of the rigid analytic space attached to $\hat{U}_x$. See \cite[$\mathsection$ 7]{deJong-formal-rigid} for details. 
An immediate corollary of the proof for \ref{Kisin-Prop-2.3.5} (resp.~\ref{irreducible-components-parahoric}) is the following ``cristalline criterion,'' which forms the basis for the proof of main theorem \ref{Main-Theorem-intro}.
\begin{lem}\label{Kisin-1.3.11}(hyperspecial level \cite[Prop 1.3.9., Corollary 1.3.11]{Kisin-mod-p-points}; parahoric level \cite[Corollary 6.3]{Rong-mod-p})\\
(a) $s_{\alpha,\cris,\Tilde{x}}$ depends only on $x$ and not on $\Tilde{x}$. (Thus we will write $s_{\alpha,\cris,x}$ in place of $s_{\alpha,\cris,\Tilde{x}}$ from now on.)\\
(b) Let $x,x'\in\mathscr{S}_K(G,X)(k)$ be two points having the same image in $\mathscr{S}_K^-(G,X)(k)$. Then $x=x'$ if and only if $s_{\alpha,\cris,x}=s_{\alpha,\cris,x'}$.
\end{lem}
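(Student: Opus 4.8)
The plan is to read both parts off the deformation-theoretic description of the complete local rings of $\mathscr{S}_K(G,X)$ and $\mathscr{S}_K^{-}(G,X)$ that underlies the proof of Lemma~\ref{Kisin-Prop-2.3.5} (resp.~\ref{irreducible-components-parahoric}). First I would fix $x\in\mathscr{S}_K(G,X)(k)$ with image $\overline{x}\in\mathscr{S}_K^{-}(G,X)(k)$ and set $R_{\GSp}:=\widehat{\Oo}_{\mathscr{S}_{K'}(\GSp,S^{\pm}),\overline{x}}$, a formally smooth ring, and $R^{-}:=\widehat{\Oo}_{\mathscr{S}_K^{-}(G,X),\overline{x}}$, a reduced quotient of $R_{\GSp}$. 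Since $\mathscr{S}_K(G,X)$ is the normalization of $\mathscr{S}_K^{-}(G,X)$ and both are excellent, $\prod_{x''\mapsto\overline{x}}\widehat{\Oo}_{\mathscr{S}_K(G,X),x''}$ is the integral closure of $R^{-}$; by Lemma~\ref{Kisin-Prop-2.3.5} (resp.~\ref{irreducible-components-parahoric}) each branch $R^{-}/\mathfrak{p}_{x''}$ is already formally smooth (resp.~normal), so it equals $\widehat{\Oo}_{\mathscr{S}_K(G,X),x''}=:R_{G,x''}$, and the points $x''$ of $\mathscr{S}_K(G,X)$ above $\overline{x}$ are in bijection with the minimal primes $\mathfrak{p}_{x''}$ of $R^{-}$. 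The essential input from Kisin's construction that I would invoke is that, as a quotient $R_{\GSp}\twoheadrightarrow R_{G,x''}$ with kernel $J_{x''}$, the ring $R_{G,x''}$ is the universal deformation ring of $(\mcA_{\overline{x}}[p^{\infty}],\lambda)$ for the deformation problem (Kisin's $(G,\mu)$-adapted functor) in which the crystalline tensors $s_{\alpha,\cris,x''}\in H_{\cris}^{1}(\mcA_{\overline{x}}/W)^{\otimes}$ extend to Frobenius-invariant sections lying in $\Fil^{0}$ of the Dieudonné crystal of the universal $p$-divisible group. In particular $J_{x''}$ is determined by the tuple $(\mcA_{\overline{x}}[p^{\infty}],\lambda,(s_{\alpha,\cris,x''}))$.

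For part (a), I would identify $s_{\alpha,\cris,\widetilde{x}}$ --- defined a priori in terms of the lift $\widetilde{x}$ via the $p$-adic comparison of Lemma~\ref{Kisin-mod-p-Prop-1.3.7.}(1) --- with the restriction to $\mcA_x$ of the Frobenius-invariant section $t_{\alpha}$ of the Dieudonné crystal of the universal $p$-divisible group over the formally smooth ring $R_{G,x}$. The latter is manifestly independent of $\widetilde{x}$, since it is pinned down by the closed point: a lift $\widetilde{x}$ specializing to $x$ spreads out to an $\Oo_K$-point of $\mathscr{S}_K(G,X)$, i.e.~a local homomorphism $R_{G,x}\to\Oo_K$, and pulling back $t_{\alpha}$ and evaluating the crystal lands in the $W$-lattice $H_{\cris}^{1}(\mcA_x/W)^{\otimes}$ inside its base change to $\Oo_K$, giving an element that does not involve $\widetilde{x}$. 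The required identification of this element with $s_{\alpha,\cris,\widetilde{x}}$ is exactly \cite[Proposition 1.3.9]{Kisin-mod-p-points} (resp.~the parahoric analogue in \cite{Rong-mod-p}), which in turn rests on the compatibility of the de Rham and $p$-adic étale realizations of $s_{\alpha}$ over $\Oo_K$ from \cite[Propositions 2.2.2, 2.3.9]{Kisin-integral-model} together with the integral $p$-adic Hodge theory of $p$-divisible groups over $\Oo_K$. After this one writes $s_{\alpha,\cris,x}$.

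For part (b), the implication $x=x'\Rightarrow s_{\alpha,\cris,x}=s_{\alpha,\cris,x'}$ is immediate from part (a). For the converse, suppose $x,x'\in\mathscr{S}_K(G,X)(k)$ have the same image $\overline{x}$ and $s_{\alpha,\cris,x}=s_{\alpha,\cris,x'}$. Because $x$ and $x'$ already have the same image in $\mathscr{S}_{K'}(\GSp,S^{\pm})$, the pairs $(\mcA_x[p^{\infty}],\lambda)$ and $(\mcA_{x'}[p^{\infty}],\lambda)$ both coincide with $(\mcA_{\overline{x}}[p^{\infty}],\lambda)$; hence, by the description of $J_{x''}$ recalled above, $J_x=J_{x'}$ inside $R_{\GSp}$, and therefore the corresponding branches $\mathfrak{p}_x$ and $\mathfrak{p}_{x'}$ of $R^{-}$ coincide. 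Since distinct points of $\mathscr{S}_K(G,X)$ above $\overline{x}$ correspond to distinct minimal primes of $R^{-}$, this forces $x=x'$.

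I expect the real difficulty to reside entirely in the inputs that I would be black-boxing here, all of which are supplied by \cite{Kisin-mod-p-points} (resp.~\cite{Rong-mod-p}): that $\widehat{\Oo}_{\mathscr{S}_K(G,X),x}$ is the $(G,\mu)$-adapted deformation ring carrying the universal $p$-divisible group with extended Frobenius-invariant crystalline tensors, and the comparison in part (a) between this extended tensor and the $p$-adic étale tensor. Re-establishing these is the genuine technical heart of the matter --- it needs Faltings' deformation theory, the theory of Breuil--Kisin modules attached to $p$-divisible groups over $\Oo_K$, and, in the parahoric case, the normality of the local models $M^{\mathrm{loc}}_{G,y}$ of \cite{Pappas-Zhu} --- whereas the present argument merely repackages Lemmas~\ref{Kisin-mod-p-Prop-1.3.7.}, \ref{Kisin-Prop-2.3.5} and \ref{irreducible-components-parahoric} and their proofs. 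Beyond that, the one point to check with care in part (b) is the bijection between points above $\overline{x}$ and minimal primes of $R^{-}$, which uses that $R^{-}$ is reduced and that normalization commutes with completion for excellent rings.
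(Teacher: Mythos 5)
This lemma is quoted in the paper verbatim from \cite[Prop.\ 1.3.9, Cor.\ 1.3.11]{Kisin-mod-p-points} (resp.\ \cite[Cor.\ 6.3]{Rong-mod-p}); the paper offers no proof of its own, so there is nothing internal to compare against. Your reconstruction does correctly identify the skeleton of Kisin's argument: the branches of the completed local ring $R^{-}$ at $\overline{x}$ are the adapted deformation rings carrying the Frobenius-invariant extension $t_\alpha$ of the crystalline tensors, the formal smoothness (resp.\ normality) of these branches from Lemma~\ref{Kisin-Prop-2.3.5} (resp.\ \ref{irreducible-components-parahoric}) plus the excellence of $R^{-}$ gives the bijection between points of $\mathscr{S}_K(G,X)$ above $\overline{x}$ and the minimal primes of $R^{-}$, and the chain ``tensors determine $G_W$, which determines the cocharacter $\mu$ up to conjugacy and hence the adapted deformation ring, hence the branch, hence the point'' is exactly how part (b) is obtained. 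Two points worth flagging: in part (a), your phrasing that the identification of $s_{\alpha,\cris,\widetilde{x}}$ with the closed-point value of $t_\alpha$ ``is exactly \cite[Prop.\ 1.3.9]{Kisin-mod-p-points}'' is close to circular, since that proposition is precisely what part (a) asserts; what you actually need to invoke is the compatibility, via the Breuil--Kisin dictionary for $p$-divisible groups over $\Oo_K$ and \cite[Prop.\ 2.3.9]{Kisin-integral-model}, between the universal section $t_\alpha$ and the tensor produced by the $p$-adic comparison at the characteristic-zero point. And in part (b), you quietly use that the branches you compare lie inside $R^{-}$ (not merely $R_{\GSp}$), so that ``same $J_x$'' translates to ``same minimal prime of $R^{-}$''; this is fine because $J_x$ contains the kernel of $R_{\GSp}\twoheadrightarrow R^{-}$, but it should be said. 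Modulo those clarifications and the inputs you explicitly black-box, the argument is sound and follows the cited proof.
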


We will also need the following result, which goes into the proof of the CM lifting theorem \ref{Kisin-CM-lifting}.
\begin{lem}\label{Kisin-1.1.19}\cite[1.1.19]{Kisin-mod-p-points}
Let $\mu:\G_m\to G$ be a cocharacter, defined over $K$ and conjugate to $\mu_0$. Suppose that $\mu^{-1}$ induces an admissible filtration on $\mathbb{D}(\mathscr{G})_K:=\mathbb{D}(\mathscr{G})(W)\otimes_WK$. 

Then there exists a finite extension $K'/K$ with residue field $k'$, a $p$-divisible group $\widetilde{\mathscr{G}}'$ over $\Oo_{K'}$, and a quasi-isogeny $\theta:\mathscr{G}\to\mathscr{G}'$ where $\mathscr{G}'=\widetilde{\mathscr{G}}'\otimes_{\Oo_{K'}}k'$ such that 
\begin{enumerate}
    \item $\theta$ identifies the filtration on $\mathbb{D}(\mathscr{G}')_K$ corresponding to $\widetilde{\mathscr{G}}'$ and the filtration on $\mathbb{D}(\mathscr{G})_K$ induced by $\mu^{-1}$.
    \item $\theta$ identifies $\mathbb{D}(\mathscr{G}')$ with $g\cdot \mathbb{D}(\mathscr{G})$ for some $g\in G(L)$ with 
    \[g^{-1}b\sigma(g)\in G(\Oo_L)p^{v_0}G(\Oo_L).\]
    \item Viewing $s_{\alpha,\cris}\in \mathbb{D}(\mathscr{G}')^{\otimes}$ via $\theta$, the deformation $\widetilde{\mathscr{G}}'$ of $\mathscr{G}'$ is $gG_{W(k')}g^{-1}$-adapted, where $gG_{W(k')}g^{-1}$ is the stabilizer of $s_{\alpha,\cris}\in\mathbb{D}(\mathscr{G}')^{\otimes}$. 
\end{enumerate}
\end{lem}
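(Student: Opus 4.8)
The plan is to produce $\widetilde{\mathscr{G}}'$ by applying $p$-adic Hodge theory to the filtered module attached to $\mu^{-1}$, and then to read off the quasi-isogeny $\theta$ and the element $g$ by comparing integral Dieudonn\'e lattices, keeping track of the $G$-structure at every stage. First I would form the filtered $\varphi$-module $(N,\varphi,\Fil^{\bullet})$ in which $N=\mathbb{D}(\mathscr{G})(W)[1/p]$, $\varphi$ is the crystalline Frobenius, and $\Fil^{\bullet}$ is the filtration on $N\otimes_{W[1/p]}K=\mathbb{D}(\mathscr{G})_K$ induced by $\mu^{-1}$. The hypothesis that this filtration is admissible says precisely that $(N,\varphi,\Fil^{\bullet})$ is weakly admissible, so by Colmez--Fontaine it is the filtered module of a crystalline $\Gal(\overline{K}/K)$-representation $V$; and since $\mu$ is conjugate to the minuscule cocharacter $\mu_0$, the Hodge--Tate weights of $V$ lie in $\{0,1\}$. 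Hence, after replacing $K$ by a suitable finite extension $K'$ with residue field $k'$, $V$ is the rational $p$-adic Tate module of a $p$-divisible group $\widetilde{\mathscr{G}}'$ over $\Oo_{K'}$, by the classification of $p$-divisible groups in terms of their crystalline Tate modules. By construction the Dieudonn\'e isocrystal of $\mathscr{G}':=\widetilde{\mathscr{G}}'\otimes_{\Oo_{K'}}k'$ is identified with $(N,\varphi)$, and under this identification the Hodge filtration of the generic fibre of $\widetilde{\mathscr{G}}'$ corresponds to $\Fil^{\bullet}$.

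Over a perfect field an isomorphism of Dieudonn\'e isocrystals is the same datum as a quasi-isogeny of the underlying $p$-divisible groups, so the identification just obtained yields a quasi-isogeny $\theta\colon\mathscr{G}\to\mathscr{G}'$, and property (1) follows directly from the construction of $\widetilde{\mathscr{G}}'$. For property (2), I would use the given $\varphi$-invariant crystalline tensors $s_{\alpha,\cris}\in\mathbb{D}(\mathscr{G})^{\otimes}$ cutting out $G_W$ to fix a trivialization $\mathbb{D}(\mathscr{G})\cong\Lambda_0\otimes_{\Z_{(p)}}W$ carrying them to the standard tensors, so that $\varphi$ on $\mathbb{D}(\mathscr{G})$ becomes $b\sigma$ for some $b\in G(L)$. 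Regarding $\mathbb{D}(\mathscr{G})$ and $\mathbb{D}(\mathscr{G}')$ as $W(k')$-lattices in the common isocrystal, and using that $G_{\Z_{(p)}}$ is the scheme-theoretic stabilizer of the $s_{\alpha}$ together with the fact that the $s_{\alpha,\cris}$ (transported by $\theta$) lie integrally in both lattices, a standard argument produces $g\in G(L)$ with $\mathbb{D}(\mathscr{G}')=g\cdot\mathbb{D}(\mathscr{G})$. The relative position of the two lattices is controlled by the Hodge filtration of $\widetilde{\mathscr{G}}'$: through Grothendieck--Messing this filtration has type $\mu^{-1}$, hence type $\mu_0^{-1}$, and translating this into the language of lattice chains yields $g^{-1}b\sigma(g)\in G(\Oo_L)p^{v_0}G(\Oo_L)$; here the minuscule hypothesis on $\mu_0$ enters, forcing the relevant affine Schubert cell to be closed so that the a priori bound on the invariant of $g$ is in fact an equality. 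Equivalently, the pair $(\widetilde{\mathscr{G}}',\theta)$ defines a point of the affine Deligne--Lusztig set $X_{\mu_0}(b)$, with class $g$.

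For property (3), $\theta$ transports the tensors $s_{\alpha,\cris}\in\mathbb{D}(\mathscr{G})^{\otimes}$ to $s_{\alpha,\cris}\in\mathbb{D}(\mathscr{G}')^{\otimes}$, so the stabilizer of the latter is $gG_{W(k')}g^{-1}$. That $\widetilde{\mathscr{G}}'$ is a $gG_{W(k')}g^{-1}$-adapted deformation of $\mathscr{G}'$ then reduces to observing that the construction of $\widetilde{\mathscr{G}}'$ above can be carried out $G$-equivariantly, so that the Breuil--Kisin module of $\widetilde{\mathscr{G}}'$ inherits a $G$-structure from the $G$-valued filtered $\varphi$-module we began with --- and this is exactly Kisin's defining condition for an adapted deformation.

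The hard part is the lattice bound in property (2): one has to run the construction through integral $p$-adic Hodge theory --- $G$-Breuil--Kisin modules, equivalently $G$-$\mu$-displays --- carefully enough to see simultaneously that $g$ can be chosen inside $G(L)$ rather than merely in $\GL(N)(L)$ and that its invariant is bounded exactly by $\mu_0$; without the $G$-structure the construction only returns a lattice and a $\GL$-element, which is insufficient for the applications to CM lifting. A secondary difficulty is the passage, in the first step, from a weakly admissible filtered module with Hodge--Tate weights in $\{0,1\}$ to an honest $p$-divisible group over $\Oo_{K'}$: this is standard but relies on the nontrivial classification of $p$-divisible groups by their crystalline Tate modules (Kisin, after Breuil).
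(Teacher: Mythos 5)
The paper does not prove this statement: it is cited directly from Kisin's ``Mod $p$ points'' paper as [1.1.19], and no argument appears in the present text. So there is no in-paper proof to compare against; I will evaluate your sketch as a reconstruction of Kisin's argument. The high-level ingredients you name are the right ones: weak admissibility plus Colmez--Fontaine to get a crystalline representation, the classification of crystalline representations with Hodge--Tate weights in $\{0,1\}$ by $p$-divisible groups over $\Oo_{K'}$, and Breuil--Kisin modules equipped with a $G$-structure (via the integral tensors $\tilde{s}_{\alpha}\in\mathfrak{M}^{\otimes}$, cf.~\cite[Thm.~1.4.4]{Kisin-integral-model} or its successors) to produce both the element $g$ and the adaptedness in (3). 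You also correctly flag where the real work is, namely keeping the $G$-structure intact throughout so that $g$ lands in $G(L)$ and the relative position is controlled by $\mu_0$.

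Two places where the sketch blurs the actual mechanism. First, the bound $g^{-1}b\sigma(g)\in G(\Oo_L)p^{v_0}G(\Oo_L)$ in (2) is not obtained by Grothendieck--Messing and ``lattice chains'': in Kisin's argument it is read off from the Frobenius of the Breuil--Kisin module $\mathfrak{M}$ after specializing $u\mapsto 0$ and comparing $\varphi(\mathfrak{M})$ with $\mathfrak{M}$, with the minuscule hypothesis controlling $E(u)$-divisibility of the cokernel. (Your side remark that $\Gr_{\le\mu_0}=\Gr_{\mu_0}$ for minuscule $\mu_0$ is true, and is indeed why the constraint is an equality rather than a closure condition, but that is a consequence, not the engine.) Second, asserting that ``a standard argument produces $g\in G(L)$ with $\mathbb{D}(\mathscr{G}')=g\cdot\mathbb{D}(\mathscr{G})$'' hides the point that the scheme of tensor-preserving isomorphisms between the two $W$-lattices must be a trivial $G_W$-torsor; this requires smoothness of $G_{\Z_{(p)}}$ and integrality of the transported tensors on both sides, and it is precisely where Kisin's $G$-Breuil--Kisin formalism does nontrivial work. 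You acknowledge this as the hard part but leave it unexecuted, so what you have is a correct outline of the strategy rather than a proof.
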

The parahoric analogue of this lemma can be extracted from  \cite[$\mathsection$3]{Kisin-Pappas}. 
\end{numberedparagraph}

\subsection{A few more remarks on tensors}\label{remark-on-tensors-section}
\begin{numberedparagraph}
We continue to use the notations as in the previous sections. To emphasize the fact that $\mathscr{S}_K^-(G,X)$ depends on a choice of $K'\subset\GSp(\A_f)$, we denote it, briefly for now, by $\mathscr{S}_{K,K'}^-(G,X)$ instead. 
First we establish the following lemma, see also Corollary \ref{coro-normalization-isomorphism}. 
\begin{lem}\label{same-image-forall-K'}
One of the following two situations always holds: either 
\begin{enumerate}
    \item there exists a sufficiently small $K'\subset\GSp(\A_f)$ such that $\mathscr{S}_K(G,X)\cong \mathscr{S}_{K,K'}^-(G,X)$; or
    \item there exist two points $x,x'\in\mathscr{S}_K(G,X)(k)$ which have the same image $\overline{x}_{K'}\in\mathscr{S}_{K'}(\GSp,S^{\pm})$ for all $K'$ containing $K$.
\end{enumerate}
\end{lem}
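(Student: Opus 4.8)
The plan is to argue by contradiction: assume (1) fails, i.e.\ for every sufficiently small $K'\subset\GSp(\A_f)$ containing $K$ the normalization map $\nu_{K'}\colon\mathscr{S}_K(G,X)\to\mathscr{S}_{K,K'}^-(G,X)$ is not an isomorphism, and then produce two points witnessing (2). Since $\nu_{K'}$ is finite, birational, and the source is normal, it fails to be an isomorphism precisely when it is not injective on some geometric fibre; by the cristalline criterion of Lemma \ref{Kisin-1.3.11}(b) — applied with this particular $K'$ — non-injectivity means there exist distinct $x_{K'},x'_{K'}\in\mathscr{S}_K(G,X)(k)$ (for some finite $k$) with the same image $\overline{x}_{K'}\in\mathscr{S}_{K,K'}^-(G,X)(k)\subset\mathscr{S}_{K'}(\GSp,S^{\pm})(k)$. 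The issue is that a priori these pairs depend on $K'$, so the first real step is to extract a single pair that works uniformly.

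For this I would run a compactness/limit argument over the cofiltered system of all sufficiently small $K'\supset K$. Because $K/K'$-level structures map to $K/K''$-level structures whenever $K\subset K''\subset K'$, the images $\overline{x}_{K'}$ are compatible under the transition maps $\mathscr{S}_{K'}(\GSp,S^{\pm})\to\mathscr{S}_{K''}(\GSp,S^{\pm})$, so one is really looking at pairs of points in the inverse limit. The set $\mathscr{S}_K(G,X)(k)$ is finite once $k$ is fixed, but $k$ itself may a priori grow with $K'$; however, the two points $x_{K'},x'_{K'}$ lie over the same closed point $\overline{x}_{K'}$ of the base $\mathscr{S}_{K'}(\GSp,S^{\pm})$ whose residue field is bounded independently of $K'$ (the prime-to-$p$ level does not change the residue fields of closed points in a controlled way — one fixes a closed point of $\mathscr{S}_{K_0'}(\GSp)$ for the smallest relevant $K_0'$ and works in its preimage), so after passing to a cofinal subsystem we may take $k$ constant. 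Then $\mathscr{S}_K(G,X)(k)\times\mathscr{S}_K(G,X)(k)$ is a finite set, and by the pigeonhole principle some single ordered pair $(x,x')$ with $x\neq x'$ occurs as $(x_{K'},x'_{K'})$ for a cofinal family of $K'$; since the images are compatible under the transition maps, this pair then has the same image $\overline{x}_{K'}$ in $\mathscr{S}_{K'}(\GSp,S^{\pm})$ for \emph{all} $K'$ containing $K$, which is exactly (2).

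I expect the main obstacle to be the bookkeeping in the limit argument — specifically, checking that one may take the residue field $k$ and the pair $(x,x')$ independent of $K'$ simultaneously, and that ``same image in $\mathscr{S}_{K,K'}^-(G,X)(k)$'' genuinely upgrades to ``same image in $\mathscr{S}_{K'}(\GSp,S^{\pm})$ for all $K'$'' rather than merely for a cofinal subset (the latter suffices to conclude (2), since if two points agree after a cofinal system of prime-to-$p$ level drops they agree at every intermediate level too, as the level maps are compatible). The inputs — finiteness of $\nu_{K'}$, normality of the source, and the cristalline criterion Lemma \ref{Kisin-1.3.11} characterizing when $\nu_{K'}$ is an isomorphism — are all already available, so the argument is essentially a diagram-chase over the tower of Siegel levels together with one application of finiteness of a set of $k$-points.
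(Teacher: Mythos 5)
Your approach is genuinely different from the paper's, and it has a real gap in the middle.

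The paper does not run a limit over pairs of points at all. Instead, it works directly with the non-isomorphism locus: for each $K'$ it takes $Z_{K'}$, the closed complement of the largest open over which $\nu$ is an isomorphism, observes that these closed subsets form a decreasing chain as $K'$ shrinks, and applies Noetherianity to find $K_*'$ at which the chain stabilizes. If the stable $Z_{K_*'}$ is empty one is in case (1); otherwise any closed point $\overline{x}$ in the stable $Z_{K_*'}$ lies in $Z_{K'}$ for \emph{all} $K'$, producing case (2) directly. This gives you, with one application of the descending chain condition, exactly the ``single locus working uniformly in $K'$'' that your argument is struggling to extract.

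The gap in your version is the pigeonhole step. You need the pairs $(x_{K'},x'_{K'})$ to range over a \emph{finite} set, but $\mathscr{S}_K(G,X)(\overline{\F}_p)$ is infinite, and the residue field of the image point $\overline{x}_{K'}$ is \emph{not} bounded independently of $K'$: shrinking the prime-to-$p$ level creates closed points with strictly larger residue fields. Your parenthetical remedy --- ``fix a closed point of $\mathscr{S}_{K_0'}(\GSp)$ for the smallest relevant $K_0'$ and work in its preimage'' --- does not work: first, there is no smallest $K'$ (the system shrinks towards $K$); second, even fixing a closed point of $\mathscr{S}_{K_0'}$ at the \emph{largest} level and looking at preimages, there is no reason the chosen witnesses of non-injectivity for smaller $K'$ lie over that fixed point, since as $K'$ shrinks the non-injectivity locus can ``migrate.'' Showing that some closed point of $\mathscr{S}_{K_0'}$ persists in the image of the non-injectivity locus for all smaller $K'$ is exactly a Noetherian stabilization statement, i.e.\ the content of the paper's argument --- so your approach cannot be completed without folding in the very step you are trying to replace. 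Once one does the Noetherian argument, the pigeonhole/limit machinery is superfluous.

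Two smaller points. The sentence ``since $\nu_{K'}$ is finite, birational, and the source is normal, it fails to be an isomorphism precisely when it is not injective on some geometric fibre'' is false as a general statement about normalization maps (the cuspidal cubic $\mathbb{A}^1\to\Spec k[t^2,t^3]$ is a bijection on geometric points but not an isomorphism); the implication ``not an isomorphism $\Rightarrow$ two geometric preimages'' in this particular setting comes from the fact that the irreducible components of the completed local rings of $\mathscr{S}_K^-(G,X)$ are already formally smooth (Lemma \ref{Kisin-Prop-2.3.5}) resp.\ normal (Lemma \ref{irreducible-components-parahoric}), so a unibranch point would already be a point of normality. Also, invoking Lemma \ref{Kisin-1.3.11}(b) here is a non sequitur: non-injectivity on a geometric fibre literally \emph{is} the existence of two distinct points with the same image, and the cristalline criterion plays no role at this stage (it enters only later, in the proof of Proposition \ref{final-num-cris-triviality-motivated} and Corollary \ref{coro-normalization-isomorphism}).
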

\begin{proof}
For each $K'$ containing $K$, let $U_{K'}\subset\mathscr{S}_{K,K'}^-(G,X)$ be the largest open set of points where the normalization $\nu$ is an isomorphism. Let $Z_{K'}$ be its complement, which shrinks when $K'$ shrinks. Since $\mathscr{S}_{K,K'}^-(G,X)$ is Noetherian, the decreasing sequence $\{Z_{K'}\}_{K'}$ stabilizes at some small enough $K_{*}'$. There are then two possibilities: (1) if $Z_{K_*'}=\varnothing$, then $\mathscr{S}_K(G,X)\cong \mathscr{S}^-_{K,K_*'}(G,X)$; (2) if $Z_{K_*'}\neq\varnothing$, then we can take a closed point $\overline{x}\in Z_{K_*'}$, hence $\overline{x}\in Z_{K'}$ for any $K'$ containing $K$, such that $\nu$ is not an isomorphism at $\overline{x}$. Therefore, when we are not in situation (1), there exist two points $x,x'\in\mathscr{S}_K(G,X)$ that map to $\overline{x}\in\mathscr{S}_{K'}(\GSp,S^{\pm})$ for all $K'$ containing $K$. 
\end{proof}

Therefore, to prove Theorem \ref{Main-Theorem-intro}, it suffices to treat only case (2) in \ref{same-image-forall-K'}. We now show that case (2) reduces to an equality of the $\ell$-adic tensors $s_{\alpha,\ell}$. Let $\widetilde{x}\in \mathscr{S}_K(G,X)(K)$ be a characteristic $0$ point that specializes to $x\in\mathscr{S}_K(G,X)(k)$; likewise, let $\widetilde{x}'\in \mathscr{S}_K(G,X)(K)$ be a point that specializes to $x'\in\mathscr{S}_K(G,X)(k)$. 
\begin{lem}\label{ell-adic-tensor-equality}
Fix a level $K\subset G(\A_f)$. If $x,x'\in\mathscr{S}_K(G,X)(k)$ map to the same image point $\overline{x}_{K'}\in\mathscr{S}_{K'}(\GSp,S^{\pm})(k)$ for all $K'$ containing $K$, then $s_{\alpha,\ell,x}=s_{\alpha,\ell,x'}$.   
\end{lem}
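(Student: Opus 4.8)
The plan is to exploit the fact that the $\ell$-adic tensors $s_{\alpha,\ell,x}$ are defined via the prime-to-$p$ level structure (Remark \ref{promoted-level-structure-ell-adic}), which is part of the data recorded by the image point in the Siegel modular variety. First I would recall that, since $x$ and $x'$ are $k$-points specializing from characteristic-zero points $\widetilde{x},\widetilde{x}'\in\mathscr{S}_K(G,X)(K)$, the $\ell$-adic tensors $s_{\alpha,\ell,x}$ (resp.~$s_{\alpha,\ell,x'}$) are, by smooth proper base change, identified with the pullbacks $s_{\alpha,\ell,\widetilde{x}}$ (resp.~$s_{\alpha,\ell,\widetilde{x}'}$) of the $\ell$-adic realization $s_{\alpha,\ell}$ over the generic fibre $\Sh_K(G,X)$. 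So it suffices to compare the tensors at the characteristic-zero points, and there the assertion is essentially that the image of $\widetilde{x}$ (resp.~$\widetilde{x}'$) in the Siegel modular variety, together with the promoted level structure $\epsilon_K^p$ of \ref{promoted-section}, determines the $s_{\alpha,\ell}$.

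The key step is the following. The hypothesis says $x$ and $x'$ have the same image $\overline{x}_{K'}\in\mathscr{S}_{K'}(\GSp,S^{\pm})(k)$ for \emph{all} $K'\supset K$. Passing to the limit over such $K'$ (which exists because the transition maps are finite), this means $x,x'$ have the same image in the closure $\mathscr{S}_K^-(G,X)(k)$ — indeed, $\mathscr{S}_K^-(G,X)=\varprojlim_{K'}\mathscr{S}_{K,K'}^-(G,X)$ up to the stabilization argument of Lemma \ref{same-image-forall-K'} — and moreover the promoted prime-to-$p$ level structures agree. I would then lift: choose $\widetilde{x},\widetilde{x}'$ specializing to $x,x'$; after possibly enlarging $K$ the $\ell$-adic tensor $s_{\alpha,\ell}$ over $\Sh_K(G,X)$ is, by construction in \ref{tensor-stablizer} and Remark \ref{promoted-level-structure-ell-adic}, nothing but the image of $s_\alpha\in V_{\Z_{(p)}}^{\otimes}$ under the level structure isomorphism $\epsilon_K^p\colon V_{\A_f^p}^{\otimes}\xrightarrow{\sim}\widehat V^p(\mathcal{A})_{\Q}^{\otimes}$, componentwise at each $\ell\neq p$. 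Since the image point $\overline{x}_{K'}$ for all $K'$ records exactly the $K$-orbit, hence (in the limit) the honest isomorphism $\epsilon_K^p$ up to $K$, and since by Lemma \ref{same-image-forall-K'}(2) this data coincides for $x$ and $x'$, the $\ell$-adic tensors $s_{\alpha,\ell,x}$ and $s_{\alpha,\ell,x'}$ are identified under the resulting isomorphism $H^1_{\et}(\mathcal{A}_{x,\overline\kappa},\Q_\ell)^{\otimes}\xrightarrow{\sim}H^1_{\et}(\mathcal{A}_{x',\overline\kappa},\Q_\ell)^{\otimes}$ coming from the common image in the Siegel variety.

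The main obstacle I anticipate is bookkeeping with the passage to the limit over $K'$ and making precise the sense in which ``same image for all $K'$'' upgrades to ``same promoted level structure $\epsilon_K^p$'': one must check that the inverse limit $\varprojlim_{K'\supset K}\underline{\Isom}(V_{\A_f^p},\widehat V^p(\mathcal{A})_{\Q})/K'^p$ over a cofinal system of $K'$ recovers the single coset modulo $K^p$, and that the identification of abelian varieties $\mathcal{A}_x\sim\mathcal{A}_{x'}$ (up to prime-to-$p$ isogeny) furnished by the common Siegel image is compatible with these level structures — this is where the $p'$-quasi-isogeny and the weak polarization need to be tracked carefully. Once that compatibility is in hand, the conclusion $s_{\alpha,\ell,x}=s_{\alpha,\ell,x'}$ is immediate, since both are, under the identified cohomology groups, the image of the same universal tensor $s_\alpha$ under the same level structure.
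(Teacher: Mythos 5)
Your proposal takes essentially the same approach as the paper: both rely on Remark \ref{promoted-level-structure-ell-adic} to realize the $\ell$-adic tensors via the promoted prime-to-$p$ level structure $\epsilon_K^p$ of \ref{promoted-section}, and both argue that agreement of the images at level $K'$ for every $K'\supset K$ forces agreement of the promoted level structures at level $K=\bigcap_{K'\supset K}K'$, hence equality of the tensors. The paper phrases the final step more crisply — $s_{\alpha,\ell,x}\equiv s_{\alpha,\ell,x'}\pmod{K'}$ for all $K'$ containing $K$, so $s_{\alpha,\ell,x}\equiv s_{\alpha,\ell,x'}\pmod{K}$, and both sides are $K$-invariant by construction — while your write-up detours through smooth proper base change and an inverse limit of Siegel integral models; neither is needed for this particular lemma, and your citation of Lemma \ref{same-image-forall-K'}(2) in the middle is misplaced (that lemma supplies the dichotomy used to set up the situation, not the identification of level structures; the identification comes directly from the hypothesis of the present lemma). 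These are presentational slips rather than gaps, and the underlying argument is sound and the same as the paper's.
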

\begin{proof}
For each $K'$ containing $K$, by \ref{promoting-section-paragraph} we have sections $\epsilon_{K',\widetilde{x}}$ and $\epsilon_{K',\widetilde{x}'}$ which promote to sections $\epsilon_{K,\widetilde{x}}^p$ and $\epsilon_{K,\widetilde{x}'}^p$ respectively. Since $x$ and $x'$ map to the same image point $\overline{x}_{K'}\in\mathscr{S}_{K'}(\GSp,S^{\pm})$, by \ref{promoted-level-structure-ell-adic}, we have $s_{\alpha,\ell,x}\equiv s_{\alpha,\ell,x'}\mod K'$.
Therefore, $s_{\alpha,\ell,x}\equiv s_{\alpha,\ell,x'}\mod\bigcap\limits_{K\subset K'}K'=K$. Therefore, $s_{\alpha,\ell,x}=s_{\alpha,\ell,x'}$ since they are both invariant under $K$ by construction.
\end{proof}

\begin{remark}
If $x,x'\in\mathscr{S}_K(G,X)(k)$ map to the same image point $\overline{x}\in\mathscr{S}_K^-(G,X)(k)$ under the normalization map $\nu$, and $\widetilde{x},\widetilde{x}'\in\mathscr{S}_K(G,X)(K)$ are points that specialize to $x,x'$ respectively. Then $\mcA_{x}=\mcA_{\overline{x}}=\mcA_{x'}$; moreover, $\mcA_{\widetilde{x}}$ reduce mod $p$ to $\mcA_x$, and $\mcA_{\widetilde{x}'}$ reduce mod $p$ to $\mcA_{x'}$, i.e. $\mcA_{\widetilde{x}}$ and $\mcA_{\widetilde{x}'}$ have the same mod $p$ reduction.
\end{remark}

Now by Lemmas
\ref{Kisin-1.3.11}, \ref{same-image-forall-K'} and \ref{ell-adic-tensor-equality}, to show that the normalization morphism is an isomorphism, it suffices to show that: (i.e. Proposition \ref{key-tensor-implication})
\begin{equation*}
    s_{\alpha,\ell,x}=s_{\alpha,\ell,x'}\Longrightarrow s_{\alpha,\cris,x}=s_{\alpha,\cris,x'}
\end{equation*}

\begin{remark}
If we knew that both the Hodge conjecture for abelian varieties and the Grothendieck standard conjecture D for abelian varieties in characteristic $p$ are true, then our result follows trivially.
\end{remark}
\end{numberedparagraph}

\section{Proof of Main Theorem}

\subsection{CM lifting on the integral model}\label{CM-lifting-integral-models}

\begin{numberedparagraph}
First we recall some CM lifting results on the integral model $\mathscr{S}_K(G,X)$ from \cite[$\mathsection$2]{Kisin-mod-p-points}, whose notations we adopt. Let $v$ be a miniscule cocharacter. Consider the affine Deligne-Lusztig variety
\[X_v(b)=\{g\in G(L)/G(\Oo_L): g^{-1}b\sigma(g)\in G(\Oo_L)p^vG(\Oo_L)\}.\]
For an arbitrary point $x\in\mathscr{S}_K(G,X)(k)$, 
consider the abelian variety $\mcA_x$ and its $p$-divisible group $\mathscr{G}_x$. Consider the Dieudonne module $\mathbb{D}(\mathscr{G}_x):=\mathbb{D}(\mathscr{G}_x)(\Oo_L)$. Since $\mathbb{D}(\mathscr{G}_x)$ is a Dieudonn\'e module, $v$ acting on $\mathbb{D}(\mathscr{G}_x)$ has non-negative weights and induces a minuscule cocharacter of $\GL(\mathbb{D}(\mathscr{G}_x))$. 
If $g\in X_v(b)$, then $g\cdot \mathbb{D}(\mathscr{G}_x)$ is stable under Frobenius and satisfies the axioms of a Dieudonn\'e module. Thus $g\cdot\mathbb{D}(\mathscr{G}_x)$ corresponds to the Dieudonn\'e module of a $p$-divisible group $\mathscr{G}_{gx}$, which is naturally equipped with a quasi-isogeny $\mathscr{G}_x\to\mathscr{G}_{gx}$, corresponding to the natural isomorphism 
$g\cdot \mathbb{D}(\mathscr{G}_x)\otimes_{\Z_p}\Q_p\xrightarrow{\sim}\mathbb{D}(\mathscr{G}_x)\otimes_{\Z_p}\Q_p$. 
Note that by the definition of $G$-invariant tensors, we have
\begin{equation}
s_{\alpha,\cris,x}=g(s_{\alpha,\cris,x})\in (g\mathbb{D}(\mathscr{G}_x))^{\otimes}=\mathbb{D}(\mathscr{G}_{gx})^{\otimes}.
\end{equation}
Denote by $\mcA_{gx}$ the abelian variety corresponding to $\mathscr{G}_{gx}$. It is isogenous to $\mcA_x$ by construction, and comes equipped with a canonical $K'$-level structure $\epsilon_{K',g\cdot x}$ induced from the level structure on $\mcA_x$, and a weak polarization $\lambda_{g\cdot x}$ induced from the weak polarization on $\mcA_{x}$ since $G\subset \GSp$. 
We define a map 
\begin{align}\label{ADLV-map-to-Siegel}
    X_v(b)&\to \mathscr{S}_{K'}(\GSp,S^{\pm})(\overline{\F}_p)\\
    g&\mapsto (\mcA_{gx},\lambda_{gx})
\end{align}

\begin{lem}\label{lifting-isogeny-hyperspecial}\cite[Prop.1.4.4.]{Kisin-mod-p-points}
There is a unique lifting of \ref{ADLV-map-to-Siegel} to a map
\begin{equation}\label{iota-x-1.4.4}
\iota_x:X_v(b)\to\mathscr{S}_K(G,X)(\overline{\F}_p)
\end{equation}
such that $s_{\alpha,\cris,x}=s_{\alpha,\cris,\iota_x(g)}\in \mathbb{D}(\mathscr{G}_{gx})^{\otimes}$. 
\end{lem}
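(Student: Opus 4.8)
The plan is, for each $g\in X_v(b)$, to construct a characteristic-zero \emph{CM lift} of the Siegel point $(\mcA_{gx},\lambda_{gx})$ whose generic fibre already lies on $\Sh_K(G,X)$, then to spread this lift out over a complete discrete valuation ring, reduce it back modulo $p$ through the normalization $\nu$, and finally invoke the crystalline criterion \ref{Kisin-1.3.11}(b) to see that the mod-$p$ point obtained this way is the \emph{unique} one lying over $(\mcA_{gx},\lambda_{gx})$ with crystalline tensors equal to $s_{\alpha,\cris,x}$.

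\textbf{Step 1 (CM lift of the $p$-divisible group, then of the abelian variety).} First I would pick a cocharacter $\mu$ of $G$ conjugate to $\mu_0$, defined over a number field and factoring through a maximal $\Q$-torus $T\subset G$ containing a point of $X$ — such $T$ exists because $G$ comes from a Shimura datum — arranged (by choosing $\mu$ so that the element of $X_v(b)$ built from it in \ref{Kisin-1.1.19} equals $g$) so that Lemma \ref{Kisin-1.1.19} produces a $G$-adapted lift $\widetilde{\mathscr{G}}$ of $\mathscr{G}_{gx}$ over the ring of integers $\Oo_F$ of a finite extension $F$ of $L$, on which the crystalline tensors $s_{\alpha,\cris}$ on $\mathbb{D}(\mathscr{G}_{gx})$ — which coincide with $s_{\alpha,\cris,x}$ as elements, since $g\in G(L)$ fixes them — extend to $\Fil^0$, $\varphi$-invariant de Rham tensors over $F$. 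Next I would algebraize: by Serre–Tate theory $\widetilde{\mathscr{G}}$ corresponds to a formal deformation of the polarized abelian scheme $(\mcA_{gx},\lambda_{gx})$, algebraizable by Grothendieck existence since $\lambda_{gx}$ supplies an ample line bundle, while the prime-to-$p$ level structure lifts uniquely by étaleness. As $\mu$ is a CM cocharacter, the resulting $\widetilde{\mcA}$ acquires complex multiplication and is therefore defined over $\overline{\Q}$, yielding a point $y\in\Sh_{K'}(\GSp,S^{\pm})(\overline{\Q})$.

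\textbf{Step 2 ($y$ lies on $\Sh_K(G,X)$, and descent to the special fibre).} The $\Fil^0$, $\varphi$-invariant crystalline tensors on $\widetilde{\mathscr{G}}$ translate, via the $p$-adic comparison, into $\Gal$-invariant $p$-adic étale tensors on $\widetilde{\mcA}$ whose common stabilizer is a reductive group isomorphic to $G$ (cf.~\ref{Kisin-mod-p-Prop-1.3.7.}); transporting the $\ell$-adic tensors $s_{\alpha,\ell,x}$ ($\ell\ne p$) across the prime-to-$p$ isogeny $\mcA_{gx}\sim\mcA_x$ supplies the remaining realizations. Since $\widetilde{\mcA}$ has CM, all its Hodge cycles are absolutely Hodge, so these realizations assemble into genuine Hodge cycles $s_\alpha$ on $\widetilde{\mcA}$; the Mumford–Tate group of $\widetilde{\mcA}$ is then contained in their common stabilizer, hence in $G$, while containing a conjugate of $\mu$, so the Hodge-theoretic point attached to $\widetilde{\mcA}$ lies in $X$. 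Together with the lifted polarization and the $K^p$-level structure transported from $\mcA_x$ across the prime-to-$p$ isogeny, this exhibits a point of $\Sh_K(G,X)(\overline{\Q})$ mapping to $y$. Now the $\Oo_F$-model $\widetilde{\mcA}$ gives an $\Oo_F$-point of $\mathscr{S}_{K'}(\GSp,S^{\pm})$ whose generic point lands in $\Sh_K(G,X)$, so it factors through $\mathscr{S}_K^-(G,X)$; since $\nu$ is finite and the generic point lies in its normal locus, this $\Oo_F$-point lifts along $\nu$ to $\mathscr{S}_K(G,X)$, and its special point is the desired $\iota_x(g)$, lying over $(\mcA_{gx},\lambda_{gx})$. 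That $s_{\alpha,\cris,\iota_x(g)}=s_{\alpha,\cris,x}$ then follows by applying \ref{Kisin-mod-p-Prop-1.3.7.} to this lift. For uniqueness, any two such lifts have the same image in $\mathscr{S}_K^-(G,X)(\overline{\F}_p)$ (which injects into $\mathscr{S}_{K'}(\GSp,S^{\pm})(\overline{\F}_p)$) and the same crystalline tensors, hence coincide by \ref{Kisin-1.3.11}(b).

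\textbf{Main obstacle.} The delicate point is Step 2: ensuring the characteristic-zero lift actually lands on $\Sh_K(G,X)$ rather than merely on the ambient Siegel variety. This requires the existence of a CM cocharacter of $G$ of the right type that moreover induces an admissible filtration (an input from the structure of Shimura data and $p$-adic Hodge theory), and — crucially — the mutual compatibility of all the cohomological realizations of the tensors on $\widetilde{\mcA}$, in particular that the $p$-adic étale tensors manufactured from the crystalline ones agree with the $\ell$-adic tensors transported across the isogeny. This compatibility is exactly where one invokes that Hodge cycles on CM abelian varieties are absolutely Hodge, together with the reductivity assertion of \ref{Kisin-mod-p-Prop-1.3.7.}(2).
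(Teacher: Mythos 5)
Your uniqueness argument via Lemma \ref{Kisin-1.3.11}(b) is exactly what the paper remarks, and the reduction-through-normalization step at the end of Step 2 (lifting the $\Oo_F$-point of $\mathscr{S}_K^-(G,X)$ along the finite normalization $\nu$ using that $\Oo_F$ is a DVR) is fine. However, the existence part has a genuine gap, and the overall strategy is different from, and in fact logically inverted relative to, Kisin's actual proof.

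The gap is in Step 1: you want, for an \emph{arbitrary} $g\in X_v(b)$, to choose a cocharacter $\mu$ that is simultaneously (i) defined over a number field and factoring through a maximal $\Q$-torus of $G$, (ii) conjugate to $\mu_0$ and inducing an admissible filtration, and (iii) such that the deformation of Lemma \ref{Kisin-1.1.19} reproduces the given $g$. There is no reason such a $\mu$ exists for every $g$. The data of $\mu$ (modulo the ambiguity in \ref{Kisin-1.1.19}) determines $g$, and the $g$'s arising from ``CM'' cocharacters in your sense form a proper subset of $X_v(b)$ in general. What is true — and is precisely the content of the CM lifting theorem \ref{Kisin-CM-lifting}, i.e.~\cite[Thm.~2.2.3]{Kisin-mod-p-points} — is that \emph{some} point of the $G$-isogeny class $\iota_x(X_v(b)\times G(\A_f^p))$ is the reduction of a special (CM) point. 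But that theorem is proved \emph{using} the map $\iota_x$ whose existence is the content of the present lemma (one needs $\iota_x$ to even make sense of the isogeny class as a subset of $\mathscr{S}_K(G,X)(\overline{\F}_p)$, and the proof then locates a maximal torus in $I_p\cong I_{p,x}$, picks a suitable $\mu_T$, and applies $\iota_x$). So your proposal inverts the logical order and is circular as written.

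Kisin's actual proof does not go through CM lifts. For each $g\in X_v(b)$ he applies the $G$-adapted deformation of Lemma \ref{Kisin-1.1.19} with \emph{any} admissible $\mu$ conjugate to $\mu_0$ over $K$ — CM-ness plays no role — to produce an $\Oo_{K'}$-point of $\mathscr{S}_{K'}(\GSp,S^{\pm})$ lifting $(\mcA_{gx},\lambda_{gx})$. The substantive step is then a deformation-theoretic criterion (his Lemma 1.4.2 and Corollary 1.4.3) characterizing, purely in terms of the $p$-adic \'etale tensors on the generic fibre and their compatibility with the crystalline tensors and prime-to-$p$ level structure, when such a lift factors through $\mathscr{S}_K(G,X)$. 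The $G$-adaptedness built into \ref{Kisin-1.1.19}(3) is what makes that criterion checkable. This works for every $g$ because it does not require the filtration to arise from a $\Q$-rational torus, which is the constraint your CM approach cannot meet in general.

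Secondarily, even within your own framework, the assertion ``since $\widetilde{\mcA}$ has CM, all its Hodge cycles are absolutely Hodge, so these realizations assemble into genuine Hodge cycles $s_\alpha$'' needs more care: you are manufacturing the various realizations from different sources ($p$-adic ones from crystalline comparison, $\ell$-adic ones transported across the isogeny from $\mcA_x$), and absolute Hodgeness of cycles on a CM abelian variety does not by itself tell you that these independently constructed realizations are the components of a single absolute Hodge cycle. That compatibility is exactly what Kisin's criterion is designed to verify, without a CM hypothesis.
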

Note that the uniqueness here simply follows from Lemma \ref{Kisin-1.3.11}(b). Moreover, \ref{iota-x-1.4.4} extends to a $\langle\Phi\rangle\times Z_G(\Q_p)\times G(\A_f^p)$-equivariant map
\begin{equation}\label{}
    \iota_x:X_v(b)\times G(\A_f^p)\to\mathscr{S}_{K_p}(G,X)(\overline{\F}_p)
\end{equation}
We call the image of $\iota_x$ the \textit{$G$-isogeny class} of $x$. Let $\delta\in G(K_0)$ be as in \cite[2.1.2]{Kisin-mod-p-points}, where $K_0=W(k)[1/p]$. More specifically, we fix an isomorphism
\[\mathbb{D}(\mathscr{G}_x)\xrightarrow{\sim}V_{\Z_{(p)}}^*\otimes_{\Z_{(p)}}W\]
which takes $s_{\alpha,\cris,x}$ to $s_{\alpha}$. Thus we have identifications 
\[G_{\Z_{(p)}}\otimes_{\Z_{(p)}}K_0=G_{K_0}\xrightarrow{\sim}G(s_{\alpha,\cris,x})\subset \GL(\mathbb{D}(\mathscr{G}_x)_{K_0})\xrightarrow{\sim}\GL(H_{\cris}^1(\mcA_x/W)_{K_0}),\]
where $G(s_{\alpha,\cris,x})\subset\GL(\mathbb{D}(\mathscr{G}_x)_{K_0})$ denotes the subgroup defined by $s_{\alpha,\cris,x}$. Therefore, the Frobenius on $\mathbb{D}(\mathscr{G}_x)$ has the form $\delta\sigma$ with $\delta\in G(K_0)$, where $\sigma$ is the absolute Frobenius on $W(\overline{\F}_p)$. Note that the element $\delta$ is independent of the choices made in its construction, up to $\sigma$-conjugacy by elements of $G(W)$. 
\end{numberedparagraph}

\begin{numberedparagraph}
As in \cite[$\mathsection$ 2.1.2]{Kisin-mod-p-points}, attached to a point $x\in\mathscr{S}_K(G,X)(k)$, let $\gamma_{\ell}\in G(\Q_{\ell})$ be the geometric $q$-Frobenius in $\Gal(\overline{\F}_p/k)$ acting on $H_{\et}^1(\mcA_{\overline{x}},\Q_{\ell})$ and $I_{\ell/k}=I_{\ell,x/k}\subset G_{\Q_{\ell}}$ the centralizer of $\gamma_{\ell}$. Let $I_{\ell,n}$ be the centralizer of $\gamma_{\ell}^n$ in $G_{\Q_{\ell}}$, which forms a decreasing sequence in $G_{\Q_{\ell}}$ and stabilizes to what we shall denote as $I_{\ell}=I_{\ell,x}$ for $n$ sufficiently large. 

Likewise, we can define $I_{p/k}=I_{p,x/k}$ as the algebraic group over $\Q_p$ whose $R$-points for a $\Q_p$-algebra $R$ is given by 
\begin{equation}\label{defining-Ipk}
I_{p/k}(R):=\{\alpha\in G(W\otimes_{\Z_p}R):\delta\sigma(\alpha)=\alpha\delta\}.
\end{equation}
For each positive integer $n$, let $k_n\subset\overline{\F}_p$ be the degree-$n$ extension of $k$ and we define $I_{p,n}$ by replacing $W$ with $W(k_n)$ in \ref{defining-Ipk}. For $n$ sufficiently large, the increasing sequence $I_{p,n}$ (as subgroups inside the group $J_{\delta}$ defined in \cite[1.2.12]{Kisin-mod-p-points}) stabilizes and we denote this group by $I_p:=I_{p,x}$. 
We write 
\begin{equation}\label{defining-Ix}
I_x\subset\Aut_{\Q}(\mcA_{x}\otimes_k\overline{\F}_p)=\Aut_{\Q}(\mcA_{\overline{x}}\otimes_k\overline{\F}_p)
\end{equation}
for the subgroup whose points $I_x(R)$, for a $\Q$-algebra $R$, consist of those elements of $\Aut_{\Q}(\mcA_x\otimes_k\overline{\F}_p)(R)$ fixing the tensors $s_{\alpha,\cris,x}$ and $s_{\alpha,\ell,x}$ for all $\ell\neq p$. Similarly, attached to the point $x'\in\mathscr{S}_K(G,X)(k)$, we define $I_{x'}$ analogously. Let $\varphi$ be the Frobenius. The following Lemma is essentially the analogous result to \cite[Main Theorem]{Tate-endo-ab-var-fin-fields} for the Hodge cycles $s_{\alpha}$. 
\begin{lem}\label{rank-I-equals-rank-Iell}\cite[2.1.7]{Kisin-mod-p-points}
For some prime $\ell\neq p$, $I_{\Q_{\ell}}=I\otimes_{\Q}\Q_{\ell}$ contains the connected component of the identity in $I_{\ell}$. In particular, $\rank I=\rank I_{\ell}=\rank G$. 
\end{lem}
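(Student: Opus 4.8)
My plan is first to realise $I_{\Q_\ell}:=I\otimes_{\Q}\Q_\ell$ as a \emph{closed} subgroup of $I_\ell$ for every $\ell\neq p$, so that the assertion ``$I_{\Q_\ell}\supseteq I_\ell^\circ$'' becomes the single equality $\dim_{\Q}I=\dim_{\Q_\ell}I_\ell$. Indeed, the $\ell$-adic realisation embeds $\End^0(\mcA_{\overline x})\otimes_{\Q}\Q_\ell$ into $\End(H_{\et}^1(\mcA_{\overline x},\Q_\ell))$, and since $\End^0(\mcA_{\overline x})$ is already defined over some $k_{n_0}$ its image commutes with $\gamma_\ell^{N}$ for $N$ a large enough multiple of $n_0$ (so that the centralisers of the powers of $\gamma_\ell$ have stabilised and $I_\ell=\mathrm{Cent}_{G_{\Q_\ell}}(\gamma_\ell^{N})$). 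An element of $I$ moreover fixes $s_{\alpha,\ell,x}$, hence lies in $G_{\Q_\ell}$; so $I_{\Q_\ell}\subseteq I_\ell$ is a closed subgroup. As $I_\ell^\circ$ is connected, once $\dim I=\dim I_\ell$ is known $I_{\Q_\ell}^\circ$ is a closed subgroup of $I_\ell^\circ$ of the same dimension, hence equals it; this gives the asserted inclusion, and then $\rank I=\rank I_{\Q_\ell}=\rank I_\ell=\rank G$, the last equality because $\gamma_\ell$ acts semisimply on the $\ell$-adic cohomology of an abelian variety over a finite field, so $\gamma_\ell^N$ is semisimple and its centraliser in $G_{\Q_\ell}$ contains a maximal torus.

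\textbf{Identifying $\dim I_\ell$ via Tate.} By Tate's theorem for abelian varieties over finite fields the embedding above is an isomorphism onto $\mathrm{Cent}_{\End(H_{\et}^1(\mcA_{\overline x},\Q_\ell))}(\gamma_\ell^{N})$; intersecting with the subgroup $G_{\Q_\ell}\subset\GL(H_{\et}^1(\mcA_{\overline x},\Q_\ell))$ cut out by the tensors $s_{\alpha,\ell,x}$ yields
\[\Lie I_\ell=\bigl\{\,b\in\End^0(\mcA_{\overline x})\otimes_{\Q}\Q_\ell\ :\ b\ \text{annihilates}\ s_{\alpha,\ell,x}\ \text{for all}\ \alpha\,\bigr\}.\]
Since the annihilation condition is $\Q$-linear on the finite-dimensional $\Q$-algebra $\End^0(\mcA_{\overline x})$, this gives $\dim_{\Q_\ell}I_\ell=\dim_{\Q}A_\ell$, where $A_\ell\subset\End^0(\mcA_{\overline x})$ is the $\Q$-subspace of elements annihilating all $s_{\alpha,\ell,x}$. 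On the other hand, by definition of $I$, $\Lie I=A_{\cris}\cap\bigcap_{\ell'\neq p}A_{\ell'}$ with the evident notation (for $A_{\cris}$ one uses the action of $\End^0(\mcA_{\overline x})$ on $H_{\cris}^1(\mcA_x/W)$). Hence $\dim I\le\dim_{\Q}A_\ell=\dim I_\ell$, and it remains to prove the reverse inequality, i.e. $A_\ell\subseteq A_{\ell'}$ for all $\ell'\neq p$ and $A_\ell\subseteq A_{\cris}$ — in other words that the tensor-stabiliser Lie subalgebra inside $\End^0(\mcA_{\overline x})$ is independent of the chosen cohomological realisation.

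\textbf{Realisation-independence (the crux).} To obtain this I would fix a characteristic-zero lift $\widetilde x$ of $x$ as in \ref{fields-notations-integral-model}. For $\ell'\neq p$, smooth proper base change identifies $H_{\et}^1(\mcA_{\overline x},\Q_{\ell'})\cong H_{\et}^1(\mcA_{\widetilde x,\overline K},\Q_{\ell'})$ compatibly with the tensors; on the crystalline side, Lemma \ref{Kisin-mod-p-Prop-1.3.7.} matches $s_{\alpha,\cris,x}$ with $s_{\alpha,p,\widetilde x}$ under the $p$-adic comparison; and over the characteristic-zero point $\widetilde x$ all the $s_{\alpha,\bullet,\widetilde x}$ are realisations of the single $s_\alpha\in V^{\otimes}$, hence are carried to one another by the comparison isomorphisms. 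Granting in addition the analogue of Tate's theorem at $p$ (Tate's isogeny theorem for $p$-divisible groups over finite fields), which gives $\End^0(\mcA_{\overline x})\otimes_{\Q}\Q_p\xrightarrow{\ \sim\ }\{\varphi\text{-equivariant endomorphisms of }H_{\cris}^1(\mcA_x/W)[1/p]\}$ and hence the expected description of $A_{\cris}\otimes_{\Q}\Q_p$, one would conclude that the condition ``$b$ annihilates $s_{\alpha,\bullet,x}$'' is independent of $\bullet\in\{\ell'\ (\ell'\neq p),\ \cris\}$; so $A_\ell=A_{\ell'}=A_{\cris}$, $\dim I=\dim I_\ell$, and we are done.

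\textbf{The main obstacle.} The genuinely delicate point is precisely the realisation-independence of the spaces $A_{\bullet}$. The difficulty is that an endomorphism $b\in\End^0(\mcA_{\overline x})$ need \emph{not} lift to $\widetilde x$, so one cannot directly transport the rationality of the characteristic-zero Hodge cycle $s_\alpha$: although $b$ acts on $H_{\cris}^1(\mcA_x/W)$ and on each $H_{\et}^1(\mcA_{\overline x},\Q_{\ell'})$, it does not act on the cohomology of $\mcA_{\widetilde x}$, so the comparison isomorphisms do not obviously intertwine these actions. I expect to resolve this through the $G$-isogeny / quasi-isogeny formalism of \S\ref{CM-lifting-integral-models}: rescaling $b$ to an integral self-quasi-isogeny of $\mcA_{\overline x}$, if it fixes $s_{\alpha,\ell,x}$ then the twisted abelian variety carries the same $\ell$-adic tensors, and one bootstraps this matching across the remaining $\ell'$-adic and crystalline realisations. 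Making this precise — in particular nailing down the crystalline compatibility — is the heart of the argument, and amounts to a form of the rationality of mod $p$ Hodge cycles discussed in the introduction.
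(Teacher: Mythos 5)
Your reduction to a dimension count is sound: $I_{\Q_\ell}\hookrightarrow I_\ell$ is a closed immersion, $\gamma_\ell$ acts semisimply so $\rank I_\ell=\rank G$, and Tate's theorem correctly identifies $\Lie I_\ell$ with the $\Q_\ell$-span of $A_\ell=\{b\in\End^0(\mcA_{\overline x}):b\cdot s_{\alpha,\ell,x}=0\ \forall\alpha\}$. You also correctly locate the crux: one must show the $\Q$-subspace $A_\bullet\subset\End^0(\mcA_{\overline x})$ is independent of the realisation $\bullet$, so that $\Lie I=A_\ell$ rather than a proper subspace.

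The difficulty is that you never actually prove this realisation-independence, and the route you propose for it is \emph{circular}. You say you would ``resolve this through the $G$-isogeny / quasi-isogeny formalism of \S\ref{CM-lifting-integral-models}'' and that the needed step ``amounts to a form of the rationality of mod $p$ Hodge cycles discussed in the introduction.'' But that rationality statement is exactly Proposition \ref{key-tensor-implication}, the main technical result of the paper, and its proof runs through the CM lifting theorem \ref{Kisin-CM-lifting}. The proof of Lemma \ref{Kisin-CM-lifting} begins with ``Since $I$ and $I_p$ have the same rank, we can assume that $T$ is induced by a maximal torus $T$ in the $\Q$-group $I$ \dots''~--- i.e.\ it invokes the conclusion $\rank I=\rank G$ of the very Lemma \ref{rank-I-equals-rank-Iell} you are trying to prove. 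So your argument, if completed along the lines you sketch, would assume its own conclusion. This is a genuine logical gap, not merely a deferred technical step.

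There is also a secondary soft spot: even for two primes $\ell,\ell'\neq p$, the inclusion $A_\ell\subseteq A_{\ell'}$ does not follow from smooth proper base change alone, because (as you note) $b\in\End^0(\mcA_{\overline x})$ need not lift to $\mcA_{\widetilde x}$, and the level structure identifies the away-from-$p$ Tate module with $V_{\A_f^p}$ only up to a $K$-orbit, which $b$ need not respect. So the ``easy'' half of your realisation-independence is not obviously easy either. Kisin's proof of \cite[2.1.7]{Kisin-mod-p-points}, which the paper simply cites, establishes the dimension equality by a different and non-circular route --- one that does not pass through CM liftings of points on the Shimura variety (those come later, in \S 2.2 of \emph{loc.\ cit.}, precisely \emph{using} 2.1.7). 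To repair your approach you would need an independent argument for $A_\ell\subseteq A_{\cris}$ that does not appeal to the $G$-isogeny/CM-lifting machinery of \S\ref{CM-lifting-integral-models}.
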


The following CM lifting Theorem (and its proof) is crucial to our proofs. Thus we also sketch its proof. 
\begin{lem}\label{Kisin-CM-lifting}
(\cite[Theorem 2.2.3.]{Kisin-mod-p-points}) The isogeny class $\iota_x(X_v(\delta)\times G(\A_f^p))$ contains a point which is the reduction of a special point on $\Sh_K(G,X)$.
\end{lem}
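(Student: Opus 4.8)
The plan is to produce, inside the isogeny class $\iota_x(X_v(\delta)\times G(\A_f^p))$, a point whose associated $p$-divisible group and $\ell$-adic data come from a point with complex multiplication by a maximal torus, and to check that such a point lifts to a special point of $\Sh_K(G,X)$. The starting observation is Lemma \ref{rank-I-equals-rank-Iell}: the automorphism group $I$ attached to $x$ has $\rank I=\rank G$, so $I_{\overline{\Q}}$ contains a maximal torus $T$ of $G_{\overline{\Q}}$. After a conjugation (using that all maximal tori are conjugate over $\overline{\Q}$, together with a field-of-definition argument) we may arrange $T\subset G$ to be defined over $\Q$ and to still lie in $I$; this $T$ will be the CM torus. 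The point is that an element of $X_v(\delta)\times G(\A_f^p)$ lying in (the appropriate adelic points of) $T$ will produce a $G$-isogenous point whose crystalline Frobenius $\delta$ and $\ell$-adic Frobenii $\gamma_\ell$ all commute with $T$, i.e.\ lie in $T(K_0)$ resp.\ $T(\Q_\ell)$.

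Next I would carry out the local lifting at $p$. Having conjugated the situation so that $\delta\in T(K_0)$, one applies the Kottwitz-style classification of $\sigma$-conjugacy classes in $T$ to replace $\delta$, within its $\sigma$-conjugacy class by $T(W)$ (equivalently, moving along $X_v(\delta)$), by an element for which the associated filtration is induced by a cocharacter $\mu$ of $T$ that is $G$-conjugate to $\mu_0$ (using that $\mu_0$ is minuscule and that its $G$-conjugacy class meets every maximal torus). Such a $\mu$, being a cocharacter of a torus over $K$, automatically induces an admissible filtration on the relevant Dieudonné module, so Lemma \ref{Kisin-1.1.19} applies and yields a $p$-divisible group $\widetilde{\mathscr{G}}'$ over $\Oo_{K'}$ lifting $\mathscr{G}_{gx}$ together with the crystalline tensors $s_{\alpha,\cris}$, adapted to the stabilizer group. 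This is the crystalline incarnation of a CM lift. The prime-to-$p$ part is easier: the $K'^p$-level structure is transported along the $G(\A_f^p)$-component, and one uses $T(\A_f^p)\subset G(\A_f^p)$ to make the $\ell$-adic Galois action factor through $T$, as it must for a point with CM by $T$.

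The final step is to globalize: one must check that the resulting point of $\mathscr{S}_{K'}(\GSp,S^\pm)$ actually lies on (the closure, hence via the construction on) $\mathscr{S}_K(G,X)$ and is the reduction of a special point of $\Sh_K(G,X)$. Here one uses that the characteristic-zero lift of the abelian variety carries an action of $T$, which equips it with a $\overline{\Q}$-Hodge structure whose Mumford--Tate group is contained in $T$; by Deligne's theory of Shimura varieties (and the functoriality of the embedding $i$) the corresponding point of $\Sh_{K'}(\GSp,S^\pm)(\overline{\Q})$ factors through $\Sh_K(G,X)$ and is a special point, because $(T,\{h\})\hookrightarrow(G,X)$ is a sub-Shimura datum. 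Its reduction is, by Lemma \ref{Kisin-1.3.11}(b) and the compatibility of the crystalline tensors built into $\iota_x$ and into Lemma \ref{Kisin-1.1.19}(3), precisely the point $\iota_x(g)$ we constructed.

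\emph{Main obstacle.} The delicate point is the second paragraph: arranging $\delta$ to lie in $T(K_0)$ \emph{and} to be $\sigma$-$T(W)$-conjugate to an element whose Hodge cocharacter is a cocharacter of $T$ that is globally $G$-conjugate to $\mu_0$ — i.e.\ solving a compatible local problem at $p$ that is simultaneously consistent with the global torus $T$. This requires matching Kottwitz invariants, controlling the Newton point, and knowing that the minuscule coweight $\mu_0$'s $G$-conjugacy class meets $T$ in a way compatible with the given $\delta$; it is exactly where one needs the interplay between Lemma \ref{rank-I-equals-rank-Iell}, the structure of $X_v(\delta)$, and Lemma \ref{Kisin-1.1.19}. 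The globalization step, by contrast, is essentially formal once one knows a CM abelian variety with the right $p$-divisible group exists, since everything is pinned down by the crystalline tensors via Lemma \ref{Kisin-1.3.11}(b).
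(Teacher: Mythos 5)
Your proposal follows essentially the same route as the paper's sketched argument: use the rank equality of Lemma \ref{rank-I-equals-rank-Iell} to find a maximal torus $T$ of $G$ inside $I$, produce a deformation via Lemma \ref{Kisin-1.1.19} along a cocharacter of $T$ that induces the filtration, and conclude that the lift is special because the Mumford--Tate group commutes with $T$ and is hence contained in it. One small imprecision to fix: moving along $X_v(\delta)$ corresponds to $\sigma$-conjugation by $T(L)$, not by $T(W)=T(\Oo_L)$ (which preserves the lattice), and the relation $T\subset I_p$ only gives that $\delta\sigma$ normalizes $T$, not that $\delta\in T(K_0)$ --- arranging $\delta$ to actually lie in $T(K_0)$ after suitable $\sigma$-conjugation is precisely part of the delicate step your ``main obstacle'' paragraph correctly flags.
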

\begin{proof} (Sketch)
We take a maximal torus $T\subset I_p$ defined over $\Q_p$. Since $I$ and $I_p$ have the same rank, we can assume that $T$ is induced by a maximal torus $T$ in the $\Q$-group $I$, and the induced action of $T$ on $\mathbb{D}(\mathscr{G}_{\widetilde{x}})_K$ respects filtrations, where  $\mathscr{G}_{\widetilde{x}}$ is a $G_W$-adapted deformation such that the filtration on $\mathbb{D}(\mathscr{G}_{\widetilde{x}})_K$ is given by $\mu_T^{-1}$ (by Lemma \ref{Kisin-1.1.19}). One can then check that $T$ is in fact a maximal torus in $G$ and that $\widetilde{x}$ is a special point because the Mumford-Tate group commutes with $T$ and is hence a subgroup of $T$. 
\end{proof}
\end{numberedparagraph}

In the parahoric case, the analogue of Lemma \ref{lifting-isogeny-hyperspecial} is given in \cite[Prop.6.5]{Rong-mod-p} (from whence we inherited the assumption on  $G_{\Q_p}$), where the isogeny classes are parametrized by a certain $X(\sigma\{\mu_y\},b)$, which is a certain union of affine Deligne-Lusztig varieties over certain $\mu$-admissible set. The parahoric version of the CM lifting theorem \ref{Kisin-CM-lifting} is given in \cite[Theorem 9.4]{Rong-mod-p} under the same assumption on $G_{\Q_p}$ as Prop.~6.5 \textit{loc.cit.}

\subsection{Finishing up the proof}\label{finishing-up-the-proof-section}
\begin{numberedparagraph}\label{last-section-recall-CM-lifting}
Recall the setting from \ref{remark-on-tensors-section} that we start with two mod $p$ points $x,x'\in\mathscr{S}_K(G,X)(k)$ on the normalized integral model that map to the same image $\overline{x}\in\mathscr{S}_K^-(G,X)(k)$ for all $K'\supset K$. In particular, we have $\mcA_{x}=\mcA_{x'}=\mcA_{\overline{x}}$ by pulling back the abelian scheme $\mathscr{A}\to\mathscr{S}_K(G,X)$ to the point $x$ or $x'$.

Consider the isogeny class $\iota_{x}(X_v(\delta)\times G(\A_f^p))$. By \ref{Kisin-CM-lifting}, there exists a point $y:=\iota_{x}(g)\in \iota_{x}(X_v(\delta)\times G(\A_f^p))$, for some $g\in X_v(\delta)\times G(\A_f^p)$, such that $\mcA_y$ is $G$-isogenous to $\mcA_x$ and such that $s_{\alpha,\cris,x}=s_{\alpha,\cris,y}\in\mathbb{D}(\mathscr{G}_{gx})^{\otimes}$, and such that $y$ is the reduction of a special point $\widetilde{y}\in \Sh_K(G,X)(K')$.

\begin{lem}\label{Ix=Ix'}
Let $x,x'\in\mathscr{S}_K(G,X)(k)$ be as 
in \ref{last-section-recall-CM-lifting} and $I_x, I_{x'}$ as defined in \ref{defining-Ix}. We have $I_x=I_{x'}$ as subgroups of $\Aut_{\Q}(\mcA_{\overline{x}}\otimes_k\overline{\F}_p)$.
\end{lem}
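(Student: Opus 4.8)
The plan is to reduce the asserted equality of $\Q$-groups to a comparison after base change to a prime $\ell\neq p$, where Lemma \ref{rank-I-equals-rank-Iell} together with Tate's theorem forces both $I_x$ and $I_{x'}$ to coincide with one and the same stabilized Frobenius centralizer.

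Since $x$ and $x'$ have the same image $\overline{x}$ in $\mathscr{S}_K^-(G,X)(k)$ we have $\mcA_x=\mcA_{x'}=\mcA_{\overline{x}}$, as noted in \ref{last-section-recall-CM-lifting}, so $I_x$ and $I_{x'}$ are both closed --- and, $\Q$ being of characteristic $0$, smooth --- subgroup schemes of the single group $\Aut_\Q(\mcA_{\overline{x}}\otimes_k\overline{\F}_p)$. Moreover Lemma \ref{ell-adic-tensor-equality} applies, by the standing hypothesis of \ref{last-section-recall-CM-lifting} that $x$ and $x'$ have the same image in $\mathscr{S}_{K'}(\GSp,S^{\pm})(k)$ for \emph{all} $K'\supset K$, and gives $s_{\alpha,\ell,x}=s_{\alpha,\ell,x'}$ in $H^1_{\et}(\mcA_{\overline{x}},\Q_\ell)^{\otimes}$ for every $\ell\neq p$. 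Thus the defining conditions of $I_x$ and of $I_{x'}$ agree in all of their $\ell$-adic components, and differ only in the crystalline condition of fixing $s_{\alpha,\cris,x}$ versus $s_{\alpha,\cris,x'}$ --- which we do \emph{not} know to be equal, that being precisely Proposition \ref{key-tensor-implication}.

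Fix a prime $\ell\neq p$ and write $V_\ell=H^1_{\et}(\mcA_{\overline{x}},\Q_\ell)$. As the $s_{\alpha,\ell}$ descend to $k$, the geometric $q$-Frobenius $\gamma_\ell$ (with $q=\#k$) fixes $s_{\alpha,\ell,x}$ and so lies in the reductive subgroup $G_{\Q_\ell}\subset\GL(V_\ell)$ that these tensors cut out; since $\gamma_\ell$ is semisimple, the identity component $S$ of the Zariski closure of $\langle\gamma_\ell\rangle$ is a subtorus of $G_{\Q_\ell}$, and the stabilized Frobenius centralizer $I_\ell:=Z_{G_{\Q_\ell}}(S)$ --- the group denoted $I_{\ell,x}$ above --- is \emph{connected} reductive, being the centralizer of a torus in a connected reductive group. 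By Tate's theorem, $\End_\Q(\mcA_{\overline{x}}\otimes_k\overline{\F}_p)\otimes\Q_\ell$ is the commutant of $S$ in $\End(V_\ell)$; intersecting with the locus where $s_{\alpha,\ell,x}$ is fixed gives $I_x\otimes\Q_\ell\subseteq Z_{G_{\Q_\ell}}(S)=I_\ell$, and the same chain of inclusions, now using $s_{\alpha,\ell,x'}=s_{\alpha,\ell,x}$, gives $I_{x'}\otimes\Q_\ell\subseteq I_\ell$ with the \emph{same} $I_\ell$ --- indeed $I_\ell$ is built purely from $\mcA_{\overline{x}}$, $\gamma_\ell$ and $s_{\alpha,\ell,x}$, none of which distinguishes $x$ from $x'$. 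For a suitable $\ell$, Lemma \ref{rank-I-equals-rank-Iell} supplies the reverse inclusion $I_\ell=(I_\ell)^{\circ}\subseteq I_x\otimes\Q_\ell$, so $I_x\otimes\Q_\ell=I_\ell$, hence $I_{x'}\otimes\Q_\ell\subseteq I_x\otimes\Q_\ell$; and since a closed subscheme of the $\Q$-scheme $\Aut_\Q(\mcA_{\overline{x}}\otimes_k\overline{\F}_p)$ is detected by its base change along the faithfully flat map $\Q\hookrightarrow\Q_\ell$, we conclude $I_{x'}\subseteq I_x$. Interchanging the roles of $x$ and $x'$ (applying Lemma \ref{rank-I-equals-rank-Iell} for $x'$, at a possibly different auxiliary prime) yields $I_x\subseteq I_{x'}$, whence $I_x=I_{x'}$.

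The only step carrying genuine content is the upgrade of the containment $(I_\ell)^{\circ}\subseteq I_x\otimes\Q_\ell\subseteq I_\ell$ to an equality, which rests on the connectedness of the \emph{stabilized} (as opposed to naive) Frobenius centralizer $I_\ell$: this is what lets Lemma \ref{rank-I-equals-rank-Iell} --- essentially the crystalline avatar of Tate's theorem, which a priori only pins down the identity component --- determine $I_x$ without any reference to the crystalline tensor. I expect this to be the main obstacle; in any case, for the later construction of a common maximal torus it would already suffice to know $(I_x)^{\circ}=(I_{x'})^{\circ}$, which the above argument gives directly.
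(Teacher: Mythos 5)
Your proof is correct and follows the same overall strategy as the paper (compare the two groups after base change to $\Q_\ell$ via Lemma~\ref{rank-I-equals-rank-Iell}, then descend along the faithfully flat map $\Q\hookrightarrow\Q_\ell$). The paper's own proof is a one-liner that simply asserts an equality $I_x\otimes\Q_\ell = \{g\in\Aut_{\Q_\ell}(\cdots)\mid g s_{\alpha,\ell,x}=s_{\alpha,\ell,x},\, g\varphi=\varphi g\}$ ``by Lemma~\ref{rank-I-equals-rank-Iell},'' but that lemma only gives the containment $(I_\ell)^{\circ}\subseteq I_x\otimes\Q_\ell$, not the asserted equality with the stabilized Frobenius centralizer. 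You explicitly close this gap by observing that $I_\ell$ is the centralizer of a subtorus (coming from the semisimple Frobenius) inside the connected reductive group $G_{\Q_\ell}$ and is therefore itself connected, so $(I_\ell)^{\circ}=I_\ell$; and you correctly note the auxiliary prime in Lemma~\ref{rank-I-equals-rank-Iell} may differ for $x$ and $x'$, handling this by running the inclusion argument in both directions. One minor imprecision: the torus $S$ should really be taken to be the (stable) identity component of the Zariski closure of $\langle\gamma_\ell^{n}\rangle$ for $n$ sufficiently divisible, rather than of $\langle\gamma_\ell\rangle$ itself, so that $I_\ell = Z_{G_{\Q_\ell}}(S)$ genuinely holds --- but this does not affect the connectedness conclusion. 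Your closing remark that $(I_x)^{\circ}=(I_{x'})^{\circ}$ would already suffice for the later construction of a common maximal torus is apt and is a useful safety net should one wish to avoid leaning on the connectedness of $I_\ell$ altogether.
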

\begin{proof}
By Lemma \ref{rank-I-equals-rank-Iell}, we have
\[I_x\otimes_{\Q}\Q_{\ell}=\{g\in\Aut_{\Q_{\ell}}(H^1_{\et}(\mcA_{x,\overline{k}},\Q_{\ell}))\big| gs_{\alpha,\ell,x}=s_{\alpha,\ell,x}\text{ for all }\ell\neq p, g\varphi=\varphi g
\},\]
and likewise
\[I_{x'}\otimes_{\Q}\Q_{\ell}=\{g\in\Aut_{\Q_{\ell}}(H^1_{\et}(\mcA_{x',\overline{k}},\Q_{\ell}))\big| gs_{\alpha,\ell,x'}=s_{\alpha,\ell,x'}\text{ for all }\ell\neq p, g\varphi=\varphi g\}.\]
Since $s_{\alpha,\ell,x}=s_{\alpha,\ell,x'}$ for all $\ell\neq p$ by Lemma \ref{ell-adic-tensor-equality}, we have $I_{x}\otimes_{\Q}\Q_{\ell}=I_{x'}\otimes_{\Q}\Q_{\ell}$, thus we have 
$I_x=I_{x'}\subset\Aut_{\Q}(\mcA_{\overline{x}}\otimes_{\Q}\overline{\F}_p)$. 
\end{proof}
\end{numberedparagraph}

\begin{numberedparagraph}
By the proof of Lemma \ref{Kisin-CM-lifting}, to make such a CM lift $\widetilde{y}$ of point $y$, one finds a maximal torus $T_x$
in $I_x$. The induced action of $T_x$ on $\mathbb{D}(\mathscr{G}_{\widetilde{y}})_{K'}$ respects filtrations, and thus the action of $T_x$ on $\mcA_y$ lifts to $\mcA_{\widetilde{y}}$.
\begin{lem}
There exists a special point lift $\widetilde{y}'$ of $x'$ (up to isogeny), such that there is an isogeny $\mcA_{\widetilde{y}}\to \mcA_{\widetilde{y}'}$.
\end{lem}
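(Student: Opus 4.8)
The plan is to run the same CM‑lifting machinery as in Lemma \ref{Kisin-CM-lifting}, but starting from the point $x'$ instead of $x$, and to arrange the choices so that the resulting CM lift is isogenous to $\widetilde{y}$. First I would recall that by Lemma \ref{Ix=Ix'} we have $I_x=I_{x'}$ as subgroups of $\Aut_{\Q}(\mcA_{\overline{x}}\otimes_k\overline{\F}_p)$, and that $\mcA_x=\mcA_{x'}=\mcA_{\overline{x}}$, so the very same maximal torus $T_x\subset I_x=I_{x'}$ that was used to produce the CM lift $\widetilde{y}$ of $y$ is also a maximal torus of $I_{x'}$. Thus I apply the construction in the proof of Lemma \ref{Kisin-CM-lifting} to $x'$ with this torus $T:=T_x$: I obtain an element $g'\in X_v(\delta')\times G(\A_f^p)$ and a point $y':=\iota_{x'}(g')$ in the $G$‑isogeny class of $x'$ which is the reduction of a special point $\widetilde{y}'\in\Sh_K(G,X)(K'')$, where $\mcA_{\widetilde{y}'}$ has CM by (a torus containing) $T_x$ and $\widetilde{y}'$ specializes, up to isogeny, to $x'$.

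The key point — and this is where I would spend most of the argument — is that the Dieudonné modules of $\mcA_y$ and $\mcA_{y'}$ can be identified compatibly with their filtrations coming from the lifts $\widetilde{y},\widetilde{y}'$, producing an isogeny \emph{in characteristic zero} $\mcA_{\widetilde{y}}\to\mcA_{\widetilde{y}'}$. Concretely: since $y$ and $y'$ both lie in isogeny classes attached to $\mcA_{\overline{x}}$, after choosing the quasi‑isogenies $\mcA_x\to\mcA_{gx}=\mcA_y$ and $\mcA_{x'}\to\mcA_{g'x'}=\mcA_{y'}$ and using $\mcA_x=\mcA_{x'}$, we get a $p'$‑quasi‑isogeny $\mcA_y\to\mcA_{y'}$ identifying $\mathbb{D}(\mathscr{G}_{y})$ with a $G(L)$‑translate of $\mathbb{D}(\mathscr{G}_{y'})$ and carrying $s_{\alpha,\cris,y}$ to $s_{\alpha,\cris,y'}$ (both equal $g(s_{\alpha,\cris,x})$, $g'(s_{\alpha,\cris,x'})$ with $s_{\alpha,\cris,x}$ on the nose involved through the common $\mathbb{D}(\mathscr{G}_{\overline{x}})$). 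The crucial input is that by Lemma \ref{Kisin-1.1.19} the filtration on $\mathbb{D}(\mathscr{G}_{\widetilde{y}})_{K'}$ is given by $\mu_{T_x}^{-1}$ and the filtration on $\mathbb{D}(\mathscr{G}_{\widetilde{y}'})_{K''}$ is given by the \emph{same} cocharacter $\mu_{T_x}^{-1}$ — because we used the same torus $T_x\subset I_x=I_{x'}$, whose image acts on the common module $\mathbb{D}(\mcA_{\overline{x}})$ in one fixed way. Hence the quasi‑isogeny $\mcA_y\to\mcA_{y'}$, transported to characteristic zero, respects Hodge filtrations and therefore deforms to a quasi‑isogeny $\mcA_{\widetilde{y}}\to\mcA_{\widetilde{y}'}$; clearing denominators gives an honest isogeny.

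After that, I would record the book‑keeping: one must check that $\widetilde{y}'$ does lift $x'$ up to isogeny (this is exactly the content of the CM‑lifting theorem \ref{Kisin-CM-lifting} applied to $x'$, with the freedom to move within the isogeny class $\iota_{x'}(X_v(\delta')\times G(\A_f^p))$), and that the special‑point property of $\widetilde{y}$ — namely that the Mumford–Tate group of $\widetilde{y}$ is contained in $T_x$ — transfers to $\widetilde{y}'$ by the same Mumford–Tate‑commutes‑with‑$T_x$ argument. None of these steps requires new ideas beyond what is in the proof of \ref{Kisin-CM-lifting}; the only genuinely new observation is the one highlighted in the introduction, that having the \emph{same image} in $\mathscr{S}_K^-(G,X)(k)$ forces $I_x=I_{x'}$ and forces the two CM lifts to be built from one and the same torus, so that their $p$‑adic filtrations are literally given by the same cocharacter.

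I expect the main obstacle to be making the phrase "respects filtrations, and thus the action of $T_x$ on $\mcA_y$ lifts to $\mcA_{\widetilde{y}}$" rigorous simultaneously for $\widetilde{y}$ and $\widetilde{y}'$: one has to track the two (a priori different) finite ramified base fields $K'$ and $K''$, the two quasi‑isogeny trivializations, and verify that the common torus really does induce the \emph{same} $\mu^{-1}$ on the nose (not merely conjugate cocharacters) after these identifications — this is where Lemma \ref{Kisin-1.1.19}(1), together with the identification $\mathbb{D}(\mathscr{G}_x)=\mathbb{D}(\mathscr{G}_{x'})$ coming from $\mcA_x=\mcA_{x'}$, must be invoked with care. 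Once that is pinned down, the existence of the isogeny $\mcA_{\widetilde{y}}\to\mcA_{\widetilde{y}'}$ is formal.
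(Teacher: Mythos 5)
Your proposal is correct and follows essentially the same route as the paper: use $I_x=I_{x'}$ to run the CM-lifting construction with the identical torus $T_x$, invoke Lemma \ref{Kisin-1.1.19} to see that the induced filtrations on $\mathbb{D}(\mathscr{G}_x)=\mathbb{D}(\mathscr{G}_{x'})$ are given by the same cocharacter $\mu_{T_x}^{-1}$, and lift the resulting $p$-divisible-group map to an isogeny of abelian schemes in characteristic zero. The only thing the paper makes explicit that you leave implicit is the appeal to Serre--Tate theory to pass from the lift of the $p$-divisible group map $\mathscr{G}_{\widetilde{y}}\to\mathscr{G}_{\widetilde{y}'}$ to the isogeny of abelian varieties $\mcA_{\widetilde{y}}\to\mcA_{\widetilde{y}'}$; the ``bookkeeping'' worry you raise about the two ramified base fields and whether the cocharacter is literally equal (rather than merely conjugate) is resolved exactly as you suspect, because $T_x$ acts on the common module $\mathbb{D}(\mathscr{G}_x)=\mathbb{D}(\mathscr{G}_{x'})$.
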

\begin{proof}
Since $I_x=I_{x'}$ by Lemma \ref{Ix=Ix'},
we can take the same torus $T:=T_x=T_{x'}\subset I_x=I_{x'}$ to construct the CM liftings for $x$ and $x'$, i.e.~$\mcA_{\widetilde{y}}$ and $\mcA_{\widetilde{y}'}$. By Lemma \ref{Kisin-1.1.19}, the filtration on $\mathbb{D}(\mathscr{G}_y)$ corresponding to the deformation $\mathscr{G}_{\widetilde{y}}$ is identified via the isogeny $\mathscr{G}_y\to \mathscr{G}_x$ (induced by $g\in X_v(\delta)$) with the filtration on $\mathbb{D}(\mathscr{G}_x)=\mathbb{D}(\mathscr{G}_{x'})$ induced by $\mu_{T_x}=\mu_{T_{x'}}$, and this filtration is then identified with that on $\mathbb{D}(\mathscr{G}_{y'})$ induced by the deformation $\mathscr{G}_{\widetilde{y}'}$ (of $\mathscr{G}_{y'}$). In particular, the filtration induced by $\mcA_{\widetilde{y}}$ on $\mathbb{D}(\mathscr{G}_y)$ is identified with the filtration induced by $\mcA_{\widetilde{y}'}$ on $\mathbb{D}(\mathscr{G}_{\widetilde{y}'})$, thus we obtain a priori a map between $p$-divisible groups $\mathscr{G}_{\widetilde{y}}\to \mathscr{G}_{\widetilde{y}'}$ whose reduction mod $p$ is precisely the isogeny $\mathscr{G}_y\to\mathscr{G}_{y'}$ given by the composition $\mathscr{G}_y\to\mathscr{G}_x=\mathscr{G}_{x'}\to \mathscr{G}_{y'}$ of $G$-isogenies. (Note that $\mathscr{G}_y\to\mathscr{G}_{y'}$ is not \textit{a priori} a $G$-isogeny.) Since we have a map between the mod $p$ abelian varieties $\mcA_y\to \mcA_{y'}$ via composing the $G$-isogenies to and from $\mcA_x=\mcA_{x'}$, and a lifting of this map for $p$-divisible groups $\mathscr{G}_{\widetilde{y}}\to \mathscr{G}_{\widetilde{y}'}$, by Serre-Tate theory, we then obtain an isogeny between abelian varieties $\mcA_{\widetilde{y}}\to \mcA_{\widetilde{y}'}$. 
\end{proof}

\begin{lem}\label{mapping-tildey-to-tildey'-Hodge-cycles}
The isogeny (but not \textit{a priori} $G$-isogeny) $\mcA_{\widetilde{y}}\to\mcA_{\widetilde{y}'}$ sends the Hodge cycle $(s_{\alpha,\ell,\widetilde{y}},s_{\alpha,\dR,\widetilde{y}})$ to the Hodge cycle $(s_{\alpha,\ell,\widetilde{y}'},s_{\alpha,\dR,\widetilde{y}'})$. 
\end{lem}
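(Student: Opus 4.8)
The plan is to exploit the rationality of Hodge cycles in characteristic zero together with the fact that, by construction, $\widetilde{y}$ and $\widetilde{y}'$ are special points whose abelian varieties carry the CM action of the same torus $T = T_x = T_{x'}$. First I would recall that the isogeny $f\colon \mcA_{\widetilde{y}}\to\mcA_{\widetilde{y}'}$ produced in the previous lemma is, on $p$-divisible groups, a lift of the composite $G$-isogeny $\mathscr{G}_y\to\mathscr{G}_x=\mathscr{G}_{x'}\to\mathscr{G}_{y'}$, and that the latter by construction carries $s_{\alpha,\cris,y}$ to $s_{\alpha,\cris,y'}$ (each being equal to $s_{\alpha,\cris,x}=s_{\alpha,\cris,x'}$ transported through the $X_v(\delta)$-isogenies, by Lemma \ref{lifting-isogeny-hyperspecial}). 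So on the de Rham side, after reducing mod $p$ and passing through the crystalline comparison, $f$ matches $s_{\alpha,\dR,\widetilde{y}}$ with $s_{\alpha,\dR,\widetilde{y}'}$ modulo $p$; the point is to upgrade this to an honest equality of the de Rham tensors in characteristic zero and to deduce the $\ell$-adic statement simultaneously.

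The key step is the following: because $\widetilde{y}$ is a special point, the Hodge cycle $(s_{\alpha,\ell,\widetilde{y}}, s_{\alpha,\dR,\widetilde{y}})$ on $\mcA_{\widetilde{y}}$ is \emph{absolute Hodge} (indeed, by Deligne's theorem all Hodge cycles on abelian varieties are absolute Hodge, and on CM abelian varieties this is classical), and similarly for $\widetilde{y}'$. Thus the single algebraic-de-Rham-plus-\'etale datum $f^*(s_{\alpha,\ell,\widetilde{y}'}, s_{\alpha,\dR,\widetilde{y}'})$ is again an absolute Hodge cycle on $\mcA_{\widetilde{y}}$, and to check that it equals $(s_{\alpha,\ell,\widetilde{y}}, s_{\alpha,\dR,\widetilde{y}})$ it suffices to check equality of \emph{one} realization. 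I would check it on the Betti/$\ell$-adic realization: there I can use that both $\widetilde{y}$ and $\widetilde{y}'$ specialize to $x$ and $x'$ respectively with $s_{\alpha,\ell,x}=s_{\alpha,\ell,x'}$ by Lemma \ref{ell-adic-tensor-equality}, and that the specialization maps on $\ell$-adic cohomology are isomorphisms compatible with the tensors; unwinding, $f$ on $\ell$-adic cohomology reduces (via smooth/proper base change along the specialization) to the map induced by the $G$-isogenies through $\mcA_x=\mcA_{x'}$, which by definition of $G$-isogeny carries $s_{\alpha,\ell,x}$ to $s_{\alpha,\ell,x'}=s_{\alpha,\ell,x}$. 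Hence the $\ell$-adic components agree, and then the absolute-Hodge (equivalently, de Rham–étale compatibility of Hodge cycles on abelian varieties) principle forces $f^*s_{\alpha,\dR,\widetilde{y}'}=s_{\alpha,\dR,\widetilde{y}}$ as well.

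The main obstacle I anticipate is the bookkeeping needed to identify the isogeny $f$ on cohomology with the composite of $G$-isogenies through the common reduction $\mcA_x=\mcA_{x'}$ in a way that is simultaneously valid $\ell$-adically and (via crystalline comparison) de Rham-theoretically: one has to be careful that the Serre–Tate lift $f$ constructed from the filtration identification is the \emph{same} map whose generic fibre we are testing, and that the specialization isomorphisms on $H^1_{\et}$ intertwine $s_{\alpha,\ell,\widetilde{y}}$ with $s_{\alpha,\ell,x}$ correctly (this is where $\Gal(\overline{K}/K)$-invariance of $s_{\alpha,p,\widetilde{x}}$ and the smooth-base-change picture of \ref{tensor-stablizer} enter). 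Once that compatibility is in place, the invocation of Deligne's theorem on absolute Hodge cycles — or, what is enough here, the fact that a Hodge cycle on an abelian variety over a field of characteristic zero is determined by its $\ell$-adic realization — closes the argument without any further computation.
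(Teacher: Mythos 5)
Your proposal is correct and follows essentially the same route as the paper: verify the $\ell$-adic realization agrees (by factoring through the common reduction $\mcA_x=\mcA_{x'}$ and invoking the specialization isomorphism), then conclude the de Rham realization agrees because a Hodge cycle in characteristic zero is determined by a single realization. The one cosmetic difference is that you invoke Deligne's theorem on absolute Hodge cycles, whereas the paper only uses the weaker (and sufficient) fact that $s_{\alpha,\ell}$ and $s_{\alpha,\dR}$ arise from a common Betti class via comparison isomorphisms compatible with isogeny pullback; both framings are valid here.
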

\begin{proof}
Under the $G$-isogeny $\mcA_y\to \mcA_x$, the tensors $s_{\alpha,\ell,y}$ get sent to $s_{\alpha,\ell,x}=s_{\alpha,\ell,x'}$, which then get sent to $s_{\alpha,\ell,y'}$ under the $G$-isogeny $\mcA_{x'}\to\mcA_{y'}$. In particular, the isogeny (but not \textit{a priori} a $G$-isogeny) $\mcA_y\to\mcA_{y'}$, which factors through $\mcA_x=\mcA_{x'}$, sends $s_{\alpha,\ell,y}$ to $s_{\alpha,\ell,y'}$. Under the specialization isomorphism $H^*_{\et}(\mcA_{y,\overline{k}},\Q_{\ell})\to H^*_{\et}(\mcA_{\widetilde{y},\overline{K}},\Q_{\ell})$, the isogeny $\mcA_{\widetilde{y}}\to\mcA_{\widetilde{y}'}$ sends $s_{\alpha,\ell,\widetilde{y}}$ to $s_{\alpha,\ell,\widetilde{y}'}$. Since $\mcA_{\widetilde{y}}\to\mcA_{\widetilde{y}'}$ is an isogeny in characteristic zero and Hodge cycles in characteristic zero are determined by either its $\ell$-adic \'etale or de Rham components (as they both come from the Betti realizations), the isogeny $\mcA_{\widetilde{y}}\to\mcA_{\widetilde{y}'}$ (again not \textit{a priori} known to be a $G$-isogeny) sends the Hodge cycle $(s_{\alpha,\ell,\widetilde{y}},s_{\alpha,\dR,\widetilde{y}})$ to $(s_{\alpha,\ell,\widetilde{y}'},s_{\alpha,\dR,\widetilde{y}'})$. 
\end{proof}

\begin{Coro}\label{y-to-y'-cris}
The isogeny (again not \textit{a priori} $G$-isogeny) $\mcA_y\to \mcA_{y'}$ sends $s_{\alpha,\cris,y}$ to $s_{\alpha,\cris,y'}$. 
\end{Coro}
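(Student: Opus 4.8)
The plan is to push the conclusion of Lemma~\ref{mapping-tildey-to-tildey'-Hodge-cycles} from characteristic zero down to the crystalline realisations over the special fibre, by exploiting the good reduction of $\widetilde{y}$ and $\widetilde{y}'$ together with the functoriality of the $p$-adic comparison isomorphism of Lemma~\ref{Kisin-mod-p-Prop-1.3.7.}.

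First I would extract the $p$-adic étale component of Lemma~\ref{mapping-tildey-to-tildey'-Hodge-cycles}. A Hodge cycle on an abelian variety over a field of characteristic zero is an absolute Hodge cycle, all of whose realisations --- the Betti realisation $s_{\alpha,B}$, the de Rham realisation $s_{\alpha,\dR}$, and the $\ell$-adic étale realisations $s_{\alpha,\ell}$ for every $\ell$, including $\ell=p$ --- are determined by $s_{\alpha,B}$ and are compatible under the relevant comparison isomorphisms. Since the isogeny $\mcA_{\widetilde{y}}\to\mcA_{\widetilde{y}'}$ matches the $\ell$-adic realisations for $\ell\neq p$, it matches the Betti realisations (via the comparison with singular cohomology), hence matches \emph{all} realisations; in particular it carries $s_{\alpha,p,\widetilde{y}}\in H^1_{\et}(\mcA_{\widetilde{y},\overline{K}},\Z_p)^{\otimes}$ to $s_{\alpha,p,\widetilde{y}'}$.

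Next, after enlarging $K'$ to a common finite extension, both $\widetilde{y}$ and $\widetilde{y}'$ are defined over $K'$ and have good reduction, since they specialise to the points $y,y'$ of the integral model; so $\mcA_{\widetilde{y}}$ and $\mcA_{\widetilde{y}'}$ extend to abelian schemes over $\Oo_{K'}$, and by the Néron mapping property the isogeny between their generic fibres extends to an isogeny of these abelian schemes over $\Oo_{K'}$ whose reduction mod $p$ is the isogeny $\mcA_y\to\mcA_{y'}$ produced above. Functoriality of the comparison isomorphism of Lemma~\ref{Kisin-mod-p-Prop-1.3.7.}(1) with respect to this isogeny of abelian schemes over $\Oo_{K'}$ yields a commutative square relating the comparison isomorphisms attached to $(\mcA_{\widetilde{y}},\mcA_y)$ and to $(\mcA_{\widetilde{y}'},\mcA_{y'})$, with vertical arrows induced by the isogeny on $p$-adic étale cohomology, respectively on crystalline cohomology, after $\otimes\, B_{\cris}$. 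Tracing $s_{\alpha,p,\widetilde{y}}\mapsto s_{\alpha,p,\widetilde{y}'}$ around this square, and using that the horizontal comparison maps send $s_{\alpha,p,\widetilde{y}}$ to $s_{\alpha,\cris,\widetilde{y}}$ and $s_{\alpha,p,\widetilde{y}'}$ to $s_{\alpha,\cris,\widetilde{y}'}$, we conclude that the isogeny $\mcA_y\to\mcA_{y'}$ carries $s_{\alpha,\cris,\widetilde{y}}$ to $s_{\alpha,\cris,\widetilde{y}'}$; invoking Lemma~\ref{Kisin-1.3.11}(a) to identify $s_{\alpha,\cris,\widetilde{y}}=s_{\alpha,\cris,y}$ and $s_{\alpha,\cris,\widetilde{y}'}=s_{\alpha,\cris,y'}$ gives the corollary. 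The one point deserving care is the compatibility of the $p$-adic comparison isomorphism with morphisms of abelian schemes having good reduction, but this is built into the formalism that produces the $s_{\alpha,\cris}$ in the first place, so the corollary is essentially formal given Lemmas~\ref{mapping-tildey-to-tildey'-Hodge-cycles} and~\ref{Kisin-mod-p-Prop-1.3.7.}.
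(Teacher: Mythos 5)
Your argument is correct, but it takes a genuinely different (slightly longer) comparison-theorem route than the paper's one-line proof. You re-derive the matching of $p$-adic \'etale realisations $s_{\alpha,p,\widetilde{y}}\mapsto s_{\alpha,p,\widetilde{y}'}$ from the absolute-Hodge-cycle property, then push this through the functoriality of the $B_{\cris}$-comparison of Lemma~\ref{Kisin-mod-p-Prop-1.3.7.}(1) to land on the crystalline tensors. The paper instead observes that Lemma~\ref{mapping-tildey-to-tildey'-Hodge-cycles} \emph{already} records the matching of the de Rham realisations $s_{\alpha,\dR,\widetilde{y}}\mapsto s_{\alpha,\dR,\widetilde{y}'}$, so it needs only one step: functoriality of the Berthelot--Ogus specialisation isomorphism $H^*_{\dR}(\mcA_{\widetilde{y}})\to H^*_{\cris}(\mcA_y/W)\otimes_W K$. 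The two routes are compatible (the $B_{\cris}$-comparison factors through de Rham and Berthelot--Ogus), and both are essentially formal given the preceding lemma; the paper's route is a bit more economical because it uses exactly the data the lemma hands you, while yours first re-manufactures the $p$-adic component from the Betti class before applying the comparison. Your appeal to the N\'eron property to extend the isogeny over $\Oo_{K'}$ and to Lemma~\ref{Kisin-1.3.11}(a) to identify $s_{\alpha,\cris,\widetilde{y}}$ with $s_{\alpha,\cris,y}$ are both correct and are implicit in the paper's phrasing ``the mod $p$ reduction $\mcA_y\to\mcA_{y'}$ of $\mcA_{\widetilde{y}}\to\mcA_{\widetilde{y}'}$.''
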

\begin{proof}
By Lemma \ref{mapping-tildey-to-tildey'-Hodge-cycles}, the mod $p$ reduction $\mcA_y\to\mcA_{y'}$ of $\mcA_{\widetilde{y}}\to \mcA_{\widetilde{y}'}$ sends the cristalline realizations $s_{\alpha,\cris,y}$ to $s_{\alpha,\cris,y'}$, via the specialization isomorphism $H^*_{\dR}(\mcA_{\widetilde{y}})\to H^*_{\cris}(\mcA_y/W)\otimes_WK$. 
\end{proof}
Now we are ready to prove the key proposition. 
\begin{prop}\label{final-num-cris-triviality-motivated}
$s_{\alpha,\ell,x}=s_{\alpha,\ell,x'}\Longrightarrow s_{\alpha,\cris,x}=s_{\alpha,\cris,x'}$
\end{prop}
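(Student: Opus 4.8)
The plan is to transport the conclusion of Corollary \ref{y-to-y'-cris} back from the CM-liftable points $y,y'$ to the given points $x,x'$, using that all the $p$-divisible groups involved share one rational Dieudonn\'e module. Throughout I work in the situation of \ref{last-section-recall-CM-lifting}: by Lemmas \ref{same-image-forall-K'} and \ref{ell-adic-tensor-equality} it suffices to treat $x,x'\in\mathscr{S}_K(G,X)(k)$ having the same image $\overline{x}\in\mathscr{S}_K^-(G,X)(k)$ for all $K'\supset K$ (so that indeed $s_{\alpha,\ell,x}=s_{\alpha,\ell,x'}$), and in that case pulling back $\mathscr{A}$ along $\mathscr{S}_K(G,X)\to\mathscr{S}_K^-(G,X)$ gives an honest equality $\mcA_x=\mcA_{x'}=\mcA_{\overline{x}}$, hence $\mathscr{G}_x=\mathscr{G}_{x'}$ and a single isocrystal $N:=\mathbb{D}(\mathscr{G}_{\overline{x}})(\Oo_L)[1/p]$ in whose tensor space $N^{\otimes}$ both integral tensors $s_{\alpha,\cris,x},s_{\alpha,\cris,x'}\in\mathbb{D}(\mathscr{G}_{\overline{x}})^{\otimes}$ live. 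By Lemma \ref{Ix=Ix'} we have $I_x=I_{x'}$.

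First I would CM-lift $x$ exactly as in \ref{last-section-recall-CM-lifting}: choose a maximal torus $T$ of $I_x=I_{x'}$ and obtain $y=\iota_x(g)$ in the $G$-isogeny class of $x$, the reduction of a special point $\widetilde{y}$, with $s_{\alpha,\cris,y}=s_{\alpha,\cris,x}$ in $N^{\otimes}$ by the defining property of $\iota_x$ in Lemma \ref{lifting-isogeny-hyperspecial} (the $G$-isogeny $\mathscr{G}_x\to\mathscr{G}_{gx}$ is the identity on $N$ and the $s_\alpha$ are $G$-invariant). Using the \emph{same} torus $T\subset I_{x'}$, I would likewise produce $y'=\iota_{x'}(g')$, the reduction of a special point $\widetilde{y}'$, with $s_{\alpha,\cris,y'}=s_{\alpha,\cris,x'}$ in $N^{\otimes}$, together with the characteristic-zero isogeny $\mcA_{\widetilde{y}}\to\mcA_{\widetilde{y}'}$ of the preceding lemma. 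Now I invoke Corollary \ref{y-to-y'-cris}: the mod-$p$ reduction of this isogeny sends $s_{\alpha,\cris,y}$ to $s_{\alpha,\cris,y'}$; but that reduction factors as $\mcA_y\to\mcA_x=\mcA_{x'}\to\mcA_{y'}$, the two outer quasi-isogenies (coming from elements of affine Deligne--Lusztig varieties) inducing the identity on $N$ and the middle arrow being an equality, so the composite induces the identity on $N^{\otimes}$. Hence $s_{\alpha,\cris,y}=s_{\alpha,\cris,y'}$ in $N^{\otimes}$, and chaining gives $s_{\alpha,\cris,x}=s_{\alpha,\cris,y}=s_{\alpha,\cris,y'}=s_{\alpha,\cris,x'}$ in $N^{\otimes}$; since both sides already lie in the $\Oo_L$-lattice $\mathbb{D}(\mathscr{G}_{\overline{x}})^{\otimes}$ and agree after inverting $p$, they are equal, which is precisely the assertion of Proposition \ref{final-num-cris-triviality-motivated}. (Combined with Lemma \ref{Kisin-1.3.11}(b), this also yields $x=x'$, i.e.~$\nu$ is injective on $k$-points.)

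The genuinely delicate point is not this final assembly but the input already packaged in Lemma \ref{mapping-tildey-to-tildey'-Hodge-cycles} and Corollary \ref{y-to-y'-cris}: the isogeny $\mcA_{\widetilde{y}}\to\mcA_{\widetilde{y}'}$ is a priori only an isogeny, \emph{not} a $G$-isogeny, so one cannot directly assert that it respects crystalline tensors. The way around this is the heart of the argument --- match the $\ell$-adic \'etale tensors (which agree by Lemma \ref{ell-adic-tensor-equality}, since $x,x'$ have the same image in the Siegel model), use that a Hodge cycle in characteristic zero is pinned down by its \'etale realization, and then specialize through the $p$-adic comparison isomorphism. By contrast, the bookkeeping used above --- that the various $G$-isogenies act as the identity on $N$, that $s_{\alpha,\cris}$ is $G$-invariant, and that equality of integral tensors can be checked after inverting $p$ --- is purely formal. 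I do not expect any further obstacle: the proof of Proposition \ref{final-num-cris-triviality-motivated} is essentially the concatenation just described.
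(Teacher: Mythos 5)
Your proof is correct and follows the paper's own argument essentially verbatim: the decisive input is Corollary \ref{y-to-y'-cris}, and the crystalline equality at $x,x'$ is recovered by factoring the isogeny $\mcA_y\to\mcA_{y'}$ through $\mcA_y\to\mcA_x=\mcA_{x'}\to\mcA_{y'}$ and tracking how the two outer $G$-quasi-isogenies and the middle equality move the tensors $s_{\alpha,\cris,\bullet}$. Your formulation, making the shared isocrystal $N$ and the triviality of the induced maps on $N^{\otimes}$ explicit, is a slightly cleaner rendering of the same chain of identifications the paper uses.
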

\begin{proof}
Combining Lemmas \ref{Ix=Ix'} through \ref{y-to-y'-cris}, the isogeny $\mcA_y\to\mcA_{y'}$ sends $s_{\alpha,\cris,y}$ to $s_{\alpha\cris,y'}$. On the other hand, this isogeny factors through the $G$-isogeny $\mcA_y\to \mcA_x$ which sends $s_{\alpha,\cris,y}\mapsto s_{\alpha,\cris,x}$ and $G$-isogeny which sends $s_{\alpha,\cris,x'}\mapsto s_{\alpha,\cris,y'}$, thus we must also have $s_{\alpha,\cris,x}=s_{\alpha,\cris,x'}$ (otherwise the image of $s_{\alpha,\cris,y}$ via this composition of isogenies $\mcA_y\to\mcA_x=\mcA_{x'}\to \mcA_{y'}$ would get sent to something other than $s_{\alpha,\cris,y'}$, causing a contradiction). 
\end{proof}
\end{numberedparagraph}

\begin{remark}
The Proposition \ref{final-num-cris-triviality-motivated} essentially suggests that the mod $p$ points of the integral model of Hodge type can be interpreted as abelian varieties equipped with a well-defined notion of ``mod $p$ Hodge cycles,'' written as tuples $(s_{\alpha,\ell,x},s_{\alpha,\cris,x})$, which are determined by either their $\ell$-adic \'etale components or their cristalline components. This is analogous to the case with absolute Hodge cycles in characteristic $0$, which are determined by either their \'etale components or their de Rham components. 
\end{remark}

\begin{Coro}\label{coro-normalization-isomorphism}
The normalization morphism $\nu$ is an isomorphism. In particular, $\mathscr{S}_{K}(G,X)$ admits a closed embedding into $\mathscr{S}_{K'}(\GSp,S^{\pm})$.
\end{Coro}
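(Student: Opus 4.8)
The plan is to deduce Corollary \ref{coro-normalization-isomorphism} by assembling the chain of reductions already established in the excerpt, so that essentially nothing new needs to be proved. First I would invoke Lemma \ref{same-image-forall-K'}: either there is a sufficiently small $K'$ with $\mathscr{S}_K(G,X)\cong\mathscr{S}_{K,K'}^-(G,X)$, in which case $\nu$ is an isomorphism and we are done, or else there exist two points $x,x'\in\mathscr{S}_K(G,X)(k)$ with the same image $\overline{x}_{K'}$ in $\mathscr{S}_{K'}(\GSp,S^{\pm})$ for every $K'\supset K$. So the whole content is to rule out the second alternative, i.e.\ to show that any such $x,x'$ must in fact coincide.

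In the second alternative, Lemma \ref{ell-adic-tensor-equality} gives $s_{\alpha,\ell,x}=s_{\alpha,\ell,x'}$ for all $\ell\neq p$, since the ``promoted'' prime-to-$p$ level structures are recorded by the $\ell$-adic tensors (Remark \ref{promoted-level-structure-ell-adic}) and $x,x'$ have the same image mod every $K'$. Then Proposition \ref{final-num-cris-triviality-motivated} upgrades this to $s_{\alpha,\cris,x}=s_{\alpha,\cris,x'}$. Since $x,x'$ also have the same image $\overline{x}\in\mathscr{S}_K^-(G,X)(k)$ (the image in $\mathscr{S}_{K'}(\GSp,S^{\pm})$ being the same for all $K'$ forces equality of images in the closure $\mathscr{S}_K^-(G,X)$), the cristalline criterion Lemma \ref{Kisin-1.3.11}(b) gives $x=x'$. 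This contradicts the assumption that $\nu$ fails to be an isomorphism at $\overline{x}$, so the first alternative of Lemma \ref{same-image-forall-K'} must hold: $\mathscr{S}_K(G,X)\xrightarrow{\nu}\mathscr{S}_{K,K'}^-(G,X)$ is an isomorphism for $K'$ small enough.

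Finally, $\mathscr{S}_K^-(G,X)$ is by definition the scheme-theoretic closure of $\Sh_K(G,X)$ in $\mathscr{S}_{K'}(\GSp,S^{\pm})_{\Oo_{(v)}}$, hence a closed subscheme; composing the isomorphism $\nu^{-1}$ with this closed immersion yields the desired closed embedding $\mathscr{S}_K(G,X)\hookrightarrow\mathscr{S}_{K'}(\GSp,S^{\pm})$, compatible with the moduli interpretation inherited from the Siegel case, and shows that the closure $\mathscr{S}_K^-(G,X)$ is already smooth (resp.\ normal in the parahoric case, using Lemma \ref{irreducible-components-parahoric} in place of Lemma \ref{Kisin-Prop-2.3.5}). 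The only real obstacle is Proposition \ref{final-num-cris-triviality-motivated}, which has already been proved; the present corollary is a formal consequence, so I expect no further difficulty beyond carefully tracking that ``same image in $\mathscr{S}_{K'}(\GSp,S^{\pm})$ for all $K'$'' implies both ``same image in $\mathscr{S}_K^-(G,X)$'' (needed for Lemma \ref{Kisin-1.3.11}(b)) and ``equal prime-to-$p$ level structures'' (needed for Lemma \ref{ell-adic-tensor-equality}), which is exactly what Lemmas \ref{same-image-forall-K'} and \ref{ell-adic-tensor-equality} are set up to deliver.
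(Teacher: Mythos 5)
Your reduction to the implication $s_{\alpha,\ell,x}=s_{\alpha,\ell,x'}\Rightarrow s_{\alpha,\cris,x}=s_{\alpha,\cris,x'}\Rightarrow x=x'$ is correct and matches the paper, but the way you close the argument has a real gap. You rule out alternative (2) of Lemma \ref{same-image-forall-K'} and then declare that alternative (1) therefore holds, i.e.\ that $\nu$ is an isomorphism. The problem is that ``$\nu$ is injective on $k$-points'' is not by itself equivalent to ``$\nu$ is an isomorphism.'' A normalization can be bijective on points while failing to be an isomorphism: this happens precisely when the scheme is unibranch at some point but not normal there (the standard example being a cuspidal curve). So Lemma \ref{same-image-forall-K'} as stated is not a clean dichotomy, and its proof in the paper is already implicitly relying on the very geometric input you leave out.

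That input is the content of Lemma \ref{Kisin-Prop-2.3.5} (resp.\ Lemma \ref{irreducible-components-parahoric}), and the paper's proof of this corollary uses it in an essential, logically prior way. The paper's chain is: injectivity of $\nu$ on $k$-points for all $k\subset\overline{\F}_p$ together with agreement on the generic fibre implies $\mathscr{S}_K^-(G,X)$ is \emph{unibranch}; unibranch means the completed local ring $\widehat U_{\overline{x}}$ has a single irreducible component; Lemma \ref{Kisin-Prop-2.3.5} (resp.\ \ref{irreducible-components-parahoric}) says each such irreducible component is formally smooth (resp.\ normal); so $\widehat U_{\overline{x}}$ itself is formally smooth (resp.\ normal), hence $\mathscr{S}_K^-(G,X)$ is smooth (resp.\ normal), and \emph{only then} does one conclude $\nu$ is an isomorphism. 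In your write-up Lemma \ref{Kisin-Prop-2.3.5} appears only as a footnote to justify that the closure ``is already smooth'' \emph{after} you have claimed $\nu$ is an isomorphism --- i.e.\ you have the dependency reversed. The structural statement about irreducible components of the completion is what gets you from ``injective on points'' to ``isomorphism,'' and it has to be invoked at that step, not afterwards.

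So the fix is: keep your argument up through $x=x'$ (which correctly establishes point-injectivity of $\nu$ on characteristic-$p$ points), and then replace the appeal to Lemma \ref{same-image-forall-K'}'s alternative (1) with the unibranch argument: point-injectivity plus equality of generic fibres gives unibranchness of $\mathscr{S}_K^-(G,X)$; by [EGA IV (6.15.1)] and [Stacks, Tag 0C2E] each completed local ring $\widehat U_{\overline{x}}$ is irreducible; combining with Lemma \ref{Kisin-Prop-2.3.5} (resp.\ \ref{irreducible-components-parahoric}), $\widehat U_{\overline{x}}$ is formally smooth (resp.\ normal), so $\mathscr{S}_K^-(G,X)$ is smooth (resp.\ normal) over $\Oo_{(v)}$ and $\nu$ is an isomorphism. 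The final step --- composing $\nu^{-1}$ with the closed immersion of the scheme-theoretic closure into $\mathscr{S}_{K'}(\GSp,S^{\pm})_{\Oo_{(v)}}$ --- is fine as you wrote it.
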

\begin{proof}
Resume the setting in \ref{remark-on-tensors-section}. For any two mod $p$ points $x,x'\in\mathscr{S}_K(G,X)(k)$ that map to the same image in $\mathscr{S}_K^-(G,X)(k)$ for all $K'$ containing $K$, by Lemma \ref{ell-adic-tensor-equality} and Proposition \ref{final-num-cris-triviality-motivated}, we have $s_{\alpha,\cris,x}=s_{\alpha,\cris,x'}$. Therefore, by Lemma \ref{Kisin-1.3.11}, we have $x=x'$. 
Therefore, the normalization morphism $\nu$ is injective on $k$-points for any $k\subset\overline{\F}_p$ that contains the residue field $k_E$ of $E_p$ (\ref{fields-notations-integral-model}). On the other hand, since the source and target of $\nu$ also have the same generic fibre $\Sh_K(G,X)$, this implies that $\mathscr{S}_K^-(G,X)$ is unibranch.  
Therefore, by definition ([EGA4, Chapter IV (6.15.1)]), the local ring $U_x$ of $\mathscr{S}_K^-(G,X)$ at $x$ is unibranch. Thus by \cite[Tag0C2E]{stacks-project}, 
the complete local ring $\hat{U}_x$ only has one irreducible component. 
By Lemma \ref{Kisin-Prop-2.3.5} (resp.~Lemma \ref{irreducible-components-parahoric}), each irreducible component of the complete local ring $\hat{U}_x$ is formally smooth (resp.~normal), thus each $\hat{U}_x$ is formally smooth (resp.~normal) over $\Oo_{(v)}$. 
Therefore, $\mathscr{S}_K^-(G,X)$ is also smooth (resp.~normal) over $\Oo_{(v)}$. In particular, the normalization morphism $\nu$ is an isomorphism of schemes, and that the scheme-theoretic closure of $\Sh_K(G,X)$ inside $\mathscr{S}_{K'}(\GSp,S^{\pm})_{\Oo_{(v)}}$ is already smooth (resp.~normal). 
\end{proof}

\begin{remark}
Note that in particular Corollary \ref{coro-normalization-isomorphism} does not depend on the choice of a symplectic embedding $(G,X)\hookrightarrow(\GSp,S^{\pm})$. 
\end{remark}

\begin{numberedparagraph}
In particular, in the construction of the parahoric integral model $\mathscr{S}_{K^{\circ}}(G,X)$ in \cite[$\mathsection 4.3.6$]{Kisin-Pappas}, we can simply define it as the normalization of $\mathscr{S}_K^-(G,X)$ in $\Sh_{K^{\circ}}(G,X)$. This reduces the number of normalization steps by one. 

We briefly recall the notations, and refer the reader to \textit{loc. cit.} for the details. We fix a point $x\in\mathcal{B}(G,\Q_p)$, i.e. the Bruhat-Tits building of $G$, and let $\mathcal{G}=\mathcal{G}_x$ be the corresponding smooth $\Z_p$-group scheme whose generic fibre is $G$, and such that $\mathcal{G}^0$ is a parahoric group scheme. We write $K_p^{\circ}=\mathcal{G}^{\circ}(\Z_p)$ and $K^{\circ}=K_p^{\circ}K^p$. The parahoric integral model $\mathscr{S}_{K^{\circ}}(G,X)$ with parahoric level structure $K^{\circ}$ is a priori defined as the normalization of $\mathscr{S}_K(G,X)$ in $\Sh_{K^{\circ}}(G,X)$, but can now be simplified as simply the normalization of $\mathscr{S}_K^-(G,X)$ in $\Sh_{K^{\circ}}(G,X)$, as above. 
\end{numberedparagraph}

\begin{numberedparagraph}
The fact that the mod $p$ Hodge cycles (tensors) are determined by either its $\ell$-adic or its cristalline component (Proposition \ref{final-num-cris-triviality-motivated}) can be thought of as a ``rationality'' statement. As a historical remark, we remind the reader of the following rationality conjecture of Deligne's, which is a weakened form of the Hodge conjecture.
\begin{conj} \textit{(Deligne)}
Suppose two abelian varieties $A_1$ and $A_2$ defined over $\overline{\Q}$ have the same reduction $A$ over $\overline{\F}_p$, and suppose there are given Hodge cycles $\xi_1$ and $\xi_2$ on $A_1$ and $A_2$, respectively. Then the intersection number of the reductions $\overline{\xi}_{1,\et}$ and $\overline{\xi}_{2,\et}$ is rational. 
\end{conj}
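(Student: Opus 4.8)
The plan is to transplant $A_1$ and $A_2$, together with their Mumford--Tate groups $\MT(A_i)$ (which stabilise $\xi_i$), into the setting of \S\ref{Hodge-type-Chapter}, and then run the CM lifting argument of \S\ref{finishing-up-the-proof-section} backwards so as to pull down the rationality that is manifest over $\overline{\Q}$ via Betti realisations. First I would assume $p$ unramified in $\MT(A_i)$ --- as is forced by the hypotheses under which the results invoked below hold. Since $A_i$ has good reduction at the chosen place, it defines a point $\widetilde{x}_i\in\Sh_{K_i}(\MT(A_i),X_i)(\overline{\Q})$ specialising to a point $x_i\in\mathscr{S}_{K_i}(\MT(A_i),X_i)(\overline{\F}_p)$ with $\mcA_{x_i}$ identified with $A$, and $\xi_i$ occurs among the defining tensors $s_{\alpha}$, so that $\overline{\xi}_{i,\et}$ is a component of $s_{\alpha,\ell,x_i}$ and acquires, by Lemma \ref{Kisin-mod-p-Prop-1.3.7.}, a $\varphi$-invariant crystalline component $\overline{\xi}_{i,\cris}=s_{\alpha,\cris,x_i}\in\Fil^0 H^1_{\cris}(A/W)^{\otimes}$.

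Next I would produce a \emph{simultaneous} CM lift, mimicking \S\ref{finishing-up-the-proof-section}. Choose a torus $T\subset\Aut^{0}_{\Q}(A\otimes\overline{\F}_p)$ lying in $I_{x_1}\cap I_{x_2}$ and maximal in each of $I_{x_1}$, $I_{x_2}$; by Lemma \ref{Kisin-1.1.19} obtain adapted deformations whose filtration is given by $\mu_{T}^{-1}$, hence special points $\widetilde{y}_i$, isogenies $h_i\colon A\to\mcA_{y_i}$ in characteristic $p$ carrying the $\MT(A_i)$-tensors to one another, and --- by Serre--Tate theory, as in the construction preceding \ref{mapping-tildey-to-tildey'-Hodge-cycles} --- an isogeny $f\colon\mcA_{\widetilde{y}_1}\to\mcA_{\widetilde{y}_2}$ \emph{in characteristic zero} whose reduction is the composite $\mcA_{y_1}\to A\to\mcA_{y_2}$ of the $h_i$. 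Pulling the Hodge cycle $\widetilde{\xi}_2$ on $\mcA_{\widetilde{y}_2}$ --- the CM lift of $\overline{\xi}_2$ --- back along $f$ yields a Hodge cycle $f^{*}\widetilde{\xi}_2$ on $\mcA_{\widetilde{y}_1}$, which now carries the two Hodge cycles $\widetilde{\xi}_1$ and $f^{*}\widetilde{\xi}_2$, of complementary codimension. As Hodge cycles in characteristic zero have rational Betti realisations, the intersection number $\langle\widetilde{\xi}_1,f^{*}\widetilde{\xi}_2\rangle$ computed in the Betti cohomology of $\mcA_{\widetilde{y}_1}$ lies in $\Q$; by compatibility of the Betti--$\ell$-adic and Betti--de~Rham comparison isomorphisms with cup products and trace maps, it coincides with the corresponding $\ell$-adic and de~Rham intersection numbers.

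Finally I would specialise to characteristic $p$: by good reduction and smooth proper base change, the $\ell$-adic intersection number above equals that of the reductions of $\widetilde{\xi}_1$ and $f^{*}\widetilde{\xi}_2$ on $\mcA_{y_1}$, and transporting back along $h_1$ (which alters intersection numbers only by its rational degree) recovers $\langle\overline{\xi}_{1,\et},\overline{\xi}_{2,\et}\rangle$ on $A$; hence this number lies in $\Q$. Running the same step on the de~Rham side shows, via the crystalline comparison, that this rational number also equals the crystalline intersection number $\langle\overline{\xi}_{1,\cris},\overline{\xi}_{2,\cris}\rangle$ --- precisely the phenomenon recorded in Proposition \ref{final-num-cris-triviality-motivated}.

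The main obstacle is the choice of the common torus $T$ in the second step. In \S\ref{finishing-up-the-proof-section} the equality $I_x=I_{x'}$ comes for free (Lemma \ref{Ix=Ix'}, itself a consequence of $x,x'$ mapping to the same point of every $\mathscr{S}_{K'}(\GSp,S^{\pm})$), so a single torus serves both lifts; here $A_1$ and $A_2$ a priori sit on different Hodge data, the groups $\MT(A_1)$ and $\MT(A_2)$ need not even have the same rank, and $I_{x_1}$, $I_{x_2}$ need not contain a common maximal torus --- so a genuinely new ingredient is required to arrange a compatible simultaneous CM lift, e.g. replacing $(A_1,A_2)$ by $A_1\times A_2$ with $G=\MT(A_1\times A_2)$ (at the price of carrying along the non-algebraic ``diagonal'' relating the two factors modulo $p$), or restricting to configurations in which $\overline{\xi}_{1,\et}$ and $\overline{\xi}_{2,\et}$ fit into a single tensor package along $A$. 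One is also bound by the unramifiedness of $p$ in $\MT(A_i)$ and the hyperspecial --- or, through Theorem \ref{parahoric-analogue-intro}, residually split parahoric --- level hypotheses demanded by the cited results. The remaining bookkeeping --- complementary codimensions, and the compatibility of cup products, trace maps and Tate twists across the Betti, $\ell$-adic, de~Rham and crystalline realisations --- is routine.
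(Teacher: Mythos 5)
This item is a \emph{conjecture} of Deligne, recorded as background; the paper does not prove it and does not claim to. The surrounding text only observes that the statement is automatic in a very restricted situation: when $\xi_1,\xi_2$ are among the tensors $s_\alpha$ carried by a fixed Hodge-type integral model $\mathscr{S}_K(G,X)$, have complementary degree, and $\widetilde{x},\widetilde{x}'$ are both lifts on that same $\mathscr{S}_K(G,X)$. In that case no CM lifting or choice of torus is involved at all: the $s_\alpha$ form a flat family of Hodge cycles over the whole integral model, so $s_{\alpha,\widetilde{x}}$ already deforms to a Hodge cycle on $\mcA_{\widetilde{x}'}$ by moving within the family, and the intersection number is computed in characteristic zero and specialised. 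That is a tautology of the moduli setup, not an argument toward the general conjecture.

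Your proposal, by contrast, attempts to prove the conjecture itself, and you have correctly identified the point at which it breaks: the simultaneous CM lift. In \S\ref{finishing-up-the-proof-section} the argument works because $x$ and $x'$ have the same image in every Siegel integral model, so Lemma \ref{Ix=Ix'} hands you $I_x=I_{x'}$ and hence a single torus $T$ whose cocharacter produces one admissible filtration on $\mathbb{D}(\mcA_x[p^\infty])$ serving both deformations. For arbitrary $(A_1,\xi_1)$, $(A_2,\xi_2)$ with a common special fibre $A$, the groups $\MT(A_1)$ and $\MT(A_2)$ sit in unrelated Shimura data, $I_{x_1}$ and $I_{x_2}$ can have different ranks, and $I_{x_1}\cap I_{x_2}$ has no reason to contain a torus maximal in both; without such a $T$ there is no single filtration adapted to both CM deformations, and the isogeny $f$ in characteristic zero never materialises. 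Passing to $A_1\times A_2$ does not remove the obstruction --- the intersection pairing you need lives on $A$, and the diagonal correspondence you would have to carry along is precisely the kind of mod-$p$ cycle whose rationality is at issue. So the gap you flag is not a bookkeeping issue but the actual content of the conjecture, and the proposal does not close it; the paper's remark is a strictly weaker observation that avoids the problem by staying inside one family of tensors where deformation is free.
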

In our specific situation for points on $\mathscr{S}_K(G,X)$, given $\mcA_{\widetilde{x}}$ and $\mcA_{\widetilde{x}'}$ equipped with Hodge cycles $(s_{\alpha,\widetilde{x}})$ and $(s_{\alpha,\widetilde{x}'})$ respectively. Suppose $s_{\alpha,\widetilde{x}}$ and $s_{\alpha',\widetilde{x}'}$ have \textit{complementary degrees} viewed as cohomology classes on powers of $\mcA_{\widetilde{x}}$ and $\mcA_{\widetilde{x}'}$ respectively. 
Then Deligne's rationality conjecture is automatic in this case: since $s_{\alpha,\widetilde{x}}$ deforms to a Hodge cycle on $\mcA_{\widetilde{x}'}$ (and likewise $s_{\alpha',\widetilde{x}'}$ deforms to a Hodge cycle on $\mcA_{\widetilde{x}}$), the intersection number $s_{\alpha,\ell,x}\cup s_{\alpha',\ell,x'}$ is rational by rationality in characteristic zero. The same is true at $p$, i.e. $s_{\alpha,\cris,x}\cup s_{\alpha,\cris,x'}$ is rational. 
\end{numberedparagraph}

\subsection{Toroidal compactifications of integral models}\label{toroidal-cpct-section}
We briefly mention one application of our main theorem to the toroidal compactifications of integral models of Hodge type constructed in \cite{Keerthi-compactification}. 
The main input is an analysis on the boundary components from \cite{Lan-immersion}, where the level structure is taken to be \textit{neat}, i.e. sufficiently small. We remark that while Lan's result is conditional on the existence of an embedding on the open part, our result is unconditional (since we proved the embedding on the open part). 
\begin{numberedparagraph}
We adopt the notations \textit{loc. cit.}. Let $\mathscr{S}_K^{\Sigma}(G,X)=\coprod \mathcal{Z}_{[\sigma]}$ be a stratification into locally closed subschemes, where $\sigma\in \Sigma_{\mathcal{Z}}^+$. Let $\mathscr{S}_{K'}^{\Sigma'}(\GSp,S^{\pm})=\coprod \mathcal{Z}'_{[\tau]}$ be a stratification, where $\tau\in {\Sigma^{'+}_{\mathcal{Z}'}}$. Note that the Hodge morphism $\mathscr{S}_K(G,X)\to\mathscr{S}_{K'}(\GSp,S^{\pm})$ extends uniquely to a morphism $\mathscr{S}_K^{\Sigma}(G,X)\to \mathscr{S}_{K'}^{\Sigma'}(\GSp,S^{\pm})$ between their toroidal compactifications. In particular, the following diagram commutes:
\begin{equation}\label{compactification-functoriality}
\begin{tikzcd}\mathscr{S}_K^{\Sigma}(G,X)\arrow[]{r}{}&\mathscr{S}_{K'}^{\Sigma'}(\GSp,S^{\pm})\\
\mathscr{S}_K(G,X)\arrow[hook]{u}{}\arrow[]{r}{}&\mathscr{S}_{K'}(\GSp,S^{\pm})\arrow[hook]{u}{}\end{tikzcd}
\end{equation}
\end{numberedparagraph}

\begin{Coro}\label{toroidal-cpct-corollary} 
Let $(G,X)$ be a Shimura datum of Hodge type. 
For each $K\subset G(\A_f)$ sufficiently small\footnote{When the level structure is parahoric, one needs to also impose the assumptions as in \ref{parahoric-analogue-intro}}, there exist collections $\Sigma$ and $\Sigma'$ of cone decompositions, and $K'\subset\GSp(\A_f)$, such that we have a closed embedding
\begin{equation}\label{toroidal-cpct-embedding}
\mathscr{S}_K^{\Sigma}(G,X)\hookrightarrow\mathscr{S}_{K'}^{\Sigma'}(\GSp,S^{\pm})
\end{equation}
extending the Hodge embedding of integral models. \\
In particular, the normalization step is redundant, and $\mathscr{S}_K^{\Sigma}(G,X)$ can be constructed by simply taking the closure of $\Sh_K(G,X)$ inside $\mathscr{S}_{K'}^{\Sigma'}(\GSp,S^{\pm})$. 
\end{Coro}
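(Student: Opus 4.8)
The plan is to deduce Corollary~\ref{toroidal-cpct-corollary} from the open-part embedding of Theorem~\ref{Main-Theorem-intro} (equivalently, Corollary~\ref{coro-normalization-isomorphism}) together with the boundary analysis of \cite{Lan-immersion}, by checking that the hypotheses of \emph{loc.cit.} are met and that the resulting morphism is a closed immersion. First I would fix a symplectic embedding $(G,X)\hookrightarrow(\GSp,S^{\pm})$ and a neat level $K\subset G(\A_f)$, choose $K'$ as in Corollary~\ref{coro-normalization-isomorphism} so that $\mathscr{S}_K(G,X)\hookrightarrow\mathscr{S}_{K'}(\GSp,S^{\pm})$ is a closed embedding, and then invoke the refinement theory of cone decompositions (as in \cite{Keerthi-compactification,Lan-immersion}): given any choice $\Sigma'$ for the Siegel side, there is an induced (pullback) cone decomposition datum $\Sigma$ on the $(G,X)$-side that is smooth and projective and refines so that the morphism $\mathscr{S}_K^{\Sigma}(G,X)\to\mathscr{S}_{K'}^{\Sigma'}(\GSp,S^{\pm})$ exists and is proper; this is exactly the content of the commutative square \eqref{compactification-functoriality}. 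The key point is that \cite{Lan-immersion} provides, under the neatness hypothesis, a stratum-by-stratum description of this morphism on boundary charts in terms of the corresponding morphism on mixed Shimura data / torus embeddings, and shows it is a locally closed immersion on each stratum, provided the open-part morphism is an embedding --- which is now unconditional by our main theorem.

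Next I would upgrade ``locally closed immersion on strata'' to ``closed immersion of the whole compactification.'' The steps are: (i) properness of $\mathscr{S}_K^{\Sigma}(G,X)\to\mathscr{S}_{K'}^{\Sigma'}(\GSp,S^{\pm})$, which follows since both compactifications are proper over the base $\Oo_{(v)}$ and properness is checked after composing with a separated morphism; (ii) the morphism is a monomorphism, which one checks on geometric points stratum by stratum using the description of \cite{Lan-immersion} and the fact that the induced maps on boundary components (abelian parts, torus parts, and the combinatorial index set of cones) are injective for suitably chosen $\Sigma,\Sigma'$; (iii) a proper monomorphism of Noetherian schemes is a closed immersion (it is a closed immersion iff it is unramified and proper and a monomorphism; a proper monomorphism is automatically a closed immersion by \cite[Tag04XV]{stacks-project}-type arguments). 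Combining (i)--(iii) gives \eqref{toroidal-cpct-embedding}. Finally, the ``normalization step is redundant'' assertion follows just as in Corollary~\ref{coro-normalization-isomorphism}: $\mathscr{S}_K^{\Sigma}(G,X)$ is by construction the normalization of the closure of $\Sh_K(G,X)$ in $\mathscr{S}_{K'}^{\Sigma'}(\GSp,S^{\pm})$, and since we have now exhibited a closed embedding whose source is smooth (resp.\ normal in the parahoric case), the closure is already smooth (resp.\ normal), so normalization changes nothing.

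The main obstacle I anticipate is step (ii): ensuring that $\Sigma$ and $\Sigma'$ can be chosen compatibly so that the induced map on \emph{all} boundary strata is injective, not merely the open stratum. This requires that for each rational boundary component of $(G,X)$ the associated cone decomposition be the one pulled back from the corresponding boundary component of $(\GSp,S^{\pm})$ (so that the torus-embedding maps are closed immersions and the index maps on cones are injective), and that distinct $(G,X)$-strata land in distinct $\GSp$-strata --- which is precisely where Lan's analysis in \cite{Lan-immersion} of when the extended morphism is an immersion is used, and where neatness of $K$ is essential. One must also be slightly careful that the needed cone decompositions exist simultaneously as smooth \emph{and} projective refinements; this is standard (see \cite{Keerthi-compactification}) but should be cited explicitly. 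With these compatibility choices in place, injectivity on points is formal from the open-part embedding plus Lan's boundary identifications, and the rest is the general ``proper monomorphism $=$ closed immersion'' principle.
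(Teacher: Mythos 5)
The overall strategy you sketch---combine Lan's boundary analysis in \cite{Lan-immersion} with the open-part embedding from Theorem~\ref{Main-Theorem-intro}, then promote ``embedding on strata'' to ``closed embedding globally''---matches the paper's approach in spirit. Two substantive points deserve attention.

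First, your opening claim that ``given any choice $\Sigma'$ for the Siegel side, there is an induced (pullback) cone decomposition datum $\Sigma$ on the $(G,X)$-side \ldots so that the morphism \ldots exists and is proper'' and then becomes a closed embedding is stronger than what the paper actually establishes. The paper's proof uses \cite[Prop.~4.9]{Lan-immersion} to produce the \emph{existence} of one ``strictly compatible'' pair $(\Sigma,\Sigma')$, and the remark immediately following the proof explicitly states that the author does not know whether one can start from an arbitrary $\Sigma'$ and complete it to such a pair. Pullback alone does not produce a smooth decomposition on the $(G,X)$-side, and further refining $\Sigma$ to make it smooth/projective generally destroys the property that the toric maps $E_{\mathcal Z,j}(\sigma_j)\to E_{\mathcal Z'}(\tau)$ are closed immersions. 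You should downgrade this claim to existence of a suitable pair, which is what \cite[Prop.~4.9]{Lan-immersion} gives.

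Second, your step (ii) is underspecified at exactly the point where the paper does actual work. In Lan's \'etale-local picture, the source is a \emph{coproduct} $\coprod_j E_{\mathcal Z,j}(\sigma_j)\times C_{\mathcal Z,j}$ mapping to a single chart $E_{\mathcal Z'}(\tau)\times C_{\mathcal Z'}$: several $(G,X)$-boundary strata (indexed by $j$) land in the \emph{same} Siegel stratum, so your remark that ``distinct $(G,X)$-strata land in distinct $\GSp$-strata'' is not what needs to be checked. The nontrivial point is that the coproduct maps to $E_{\mathcal Z'}(\tau)\times C_{\mathcal Z'}$ injectively; the paper handles this by contradiction---if $j_1\neq j_2$ produced overlapping images, one would get overlapping closed immersions $(E_{\mathcal Z,j_1})_{z'}\to (E_{\mathcal Z'})_{z'}$ and $(E_{\mathcal Z,j_2})_{z'}\to (E_{\mathcal Z'})_{z'}$, contradicting the local consequence of the open-part closed embedding. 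Moreover, injectivity on geometric points alone is not enough to invoke ``proper monomorphism $\Rightarrow$ closed immersion''; you also need the morphism to be a monomorphism (equivalently, universally injective and unramified), which in the paper follows because each term of the coproduct is shown to be a closed immersion and the terms are disjoint in the target. Once these are fixed, your (i) and (iii) are fine; properness plus monomorphism does give a closed immersion, and the final normalization remark follows as you say.
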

\begin{proof}
The result follows immediately by combining \cite[Thm 2.2]{Lan-immersion} with our main Theorem \ref{Main-Theorem-intro}. For the reader's convenience, we sketch the argument. 

Let $x'\in \mathscr{S}_{K'}^{\Sigma'}(\GSp,S^{\pm})$ be any point that lies in the stratum $\mathcal{Z}'_{[\tau]}$. Then \'etale locally at $x'$, \ref{compactification-functoriality} becomes
\begin{equation}\label{compactification-functoriality-local}
\begin{tikzcd}
    \coprod\limits_j \big(E_{\mathcal{Z},j}(\sigma_j)\times_{\Spec\Z}C_{\mathcal{Z},j}\big)\arrow[]{r}{}& E_{\mathcal{Z}'}(\tau)\times_{\Spec\Z}C_{\mathcal{Z}'}\\
     \coprod\limits_j \big(E_{\mathcal{Z},j}\times_{\Spec\Z}C_{\mathcal{Z},j}\big)\arrow[]{r}{}\arrow[hook]{u}{}&E_{\mathcal{Z}'}\times_{\Spec\Z}C_{\mathcal{Z}'}\arrow[hook]{u}{}
     \end{tikzcd}
\end{equation}
More precisely, there exists an \'etale neighborhood $\overline{U}'\to \mathscr{S}_{K'}^{\Sigma'}(\GSp,S^{\pm})$ of $x'$ and an \'etale morphism $\overline{U}'\to E_{\mathcal{Z}'}(\tau)\times_{\Spec\Z}C_{\mathcal{Z}'}$ which pullback via the Hodge morphism to \'etale morphisms $\overline{U}\to\mathscr{S}_K^{\Sigma}(G,X)$ and $\overline{U}\to \coprod\limits_j (E_{\mathcal{Z},j}(\sigma_j)\times_{\Spec\Z}C_{\mathcal{Z},j})$. 

By \cite[Prop 4.9]{Lan-immersion}, there exist ``strictly compatible collections'' (see \cite[Definition 4.6]{Lan-immersion}) $\Sigma$ and $\Sigma'$ of cone decompositions with respect to the Hodge morphism $\Phi$. By Theorem \ref{Main-Theorem-intro} (resp.~Theorem \ref{parahoric-analogue-intro}), the Hodge morphism 
\[\Phi: \mathscr{S}_K(G,X)\to\mathscr{S}_{K'}(\GSp,S^{\pm})\]
is a closed embedding. Therefore, diagram \ref{compactification-functoriality-local} gives a closed embedding
\begin{equation}\label{lower-horizontal-compactification-functoriality-local}
\coprod\limits_j (E_{\mathcal{Z},j}\times_{\Spec\Z}C_{\mathcal{Z},j})\hookrightarrow E_{\mathcal{Z}'}\times_{\Spec\Z}C_{\mathcal{Z}'}
\end{equation}
over certain \'etale neighborhood $\overline{U}'$ of $x'$ (see \cite[3.11]{Lan-immersion}). This then implies that 
\begin{equation}\label{embedding-on-C-toroidal}
C_{\mathcal{Z},j}\to C_{\mathcal{Z}'}
\end{equation}
are closed embeddings over the image of $\overline{U}'$ in $C_{\mathcal{Z}'}$ for all $j$. 
On the other hand, one can check directly from the  construction, as in \cite[Lemma 4.3]{Lan-immersion}, that \begin{equation}\label{embedding-on-Esigma-toroidal}
E_{\mathcal{Z},j}(\sigma_j)\hookrightarrow E_{\mathcal{Z}'}(\tau)
\end{equation}
is a closed embedding. Therefore for each $j$, by \ref{embedding-on-C-toroidal} and \ref{embedding-on-Esigma-toroidal} the map 
\[E_{\mathcal{Z},j}(\sigma_j)\times_{\Spec\Z}C_{\mathcal{Z},j}\hookrightarrow E_{\mathcal{Z}'}(\tau)\times_{\Spec\Z}C_{\mathcal{Z}'}\]
is a closed embedding over the image of $\overline{U}'$. To show that 
\begin{equation}\label{upper-horizontal-compactification-functoriality-local}
\coprod\limits_j\big(E_{\mathcal{Z},j}(\sigma_j)\times_{\Spec\Z}C_{\mathcal{Z},j}\big)\to E_{\mathcal{Z}'}(\tau)\times_{\Spec\Z}C_{\mathcal{Z}'}
\end{equation}
is a closed embedding, it then suffices to show that any point $x'$ in the image of \ref{upper-horizontal-compactification-functoriality-local} can only come from one of term indexed by $j$ on the left-hand-side. We show this by contradiction. Suppose there are $j_1\neq j_2$ and points $y_1$ (resp. $y_2$) of $E_{\mathcal{Z},j_1}(\sigma_{j_1})\times_{\Spec\Z}C_{\mathcal{Z},j_1}$ (resp. $E_{\mathcal{Z},j_2}(\sigma_{j_2})\times_{\Spec\Z}C_{\mathcal{Z},j_2}$) that map to the point $x'$ of $E_{\mathcal{Z}'}(\tau)\times_{\Spec\Z}C_{\mathcal{Z}'}$ under \ref{upper-horizontal-compactification-functoriality-local}. Then $x', y_1$ and $y_2$ have the same image $z'\in C_{\mathcal{Z}'}$. On the other hand, $z'$ is also in the image of closed embeddings $C_{\mathcal{Z},j_1}\hookrightarrow C_{\mathcal{Z}'}$ and $C_{\mathcal{Z},j_2}\hookrightarrow C_{\mathcal{Z}'}$. By considering the pullbacks to $z'$ of the closed embeddings $ E_{\mathcal{Z},j_1}(\sigma_{j_1})\to E_{\mathcal{Z}'}(\tau)$ and $E_{\mathcal{Z},j_1}(\sigma_{j_1})\to E_{\mathcal{Z}'}(\tau)$, we obtain closed embeddings $\alpha_{j_1}: E_{\mathcal{Z},j_1}(\sigma_{j_1})_{z'}\to E_{\mathcal{Z}'}(\tau)_{z'}$ and $\alpha_{j_2}: E_{\mathcal{Z},j_1}(\sigma_{j_1})_{z'}\to E_{\mathcal{Z}'}(\tau)_{z'}$ which have overlapping images. One then argues as in \cite[4.4, 4.8]{Lan-immersion} to conclude that $(E_{\mathcal{Z},j_1})_{z'}\to (E_{\mathcal{Z}'})_{z'}$ and $(E_{\mathcal{Z},j_2})_{z'}\to (E_{\mathcal{Z}'})_{z'}$ would be closed embeddings with overlapping images, but then this contradicts the Hodge embedding in Theorem \ref{Main-Theorem-intro} (resp.~Theorem \ref{parahoric-analogue-intro}), or more specifically its local consequence \ref{lower-horizontal-compactification-functoriality-local}. 
\end{proof}

\begin{remark}
Note that our current proof for Corollary \ref{toroidal-cpct-corollary} only gives us the \textit{existence} of cone decompositions $\Sigma,\Sigma'$ that produce closed embeddings of the form \ref{toroidal-cpct-embedding}. Our statement does not imply that for any cone decomposition $\Sigma'$ on $\mathscr{S}_{K'}(\GSp,S^{\pm})$, there is an induced $\Sigma$ on $\mathscr{S}_K(G,X)$ such that $(\Sigma,\Sigma')$ gives the desired closed embedding \ref{toroidal-cpct-embedding}. It is unclear to the author at the moment whether \ref{toroidal-cpct-embedding} can be constructed for any $\Sigma'$. 
\end{remark}

\bibliographystyle{amsalpha}
\bibliography{bibfile}

\end{document}